\theoremstyle{theorem}
\newtheorem{theorem}{Theorem}[section]
\newtheorem{lemma}[theorem]{Lemma}
\newtheorem{proposition}[theorem]{Proposition}
\newtheorem{corollary}[theorem]{Corollary}
\theoremstyle{definition}
\newtheorem{definition}[theorem]{Definition}
\newtheorem{remark}[theorem]{Remark}
\newtheorem{Not}[theorem]{\textbf{Notation}}
\newcommand{\fol}{\textbf{CFOL}}
\newcommand{\mbc}{\textbf{mbC}}
\newcommand{\qmbc}{\textbf{QmbC}}
\newcommand{\lfis}{{\bf LFI}s}
\newcommand{\lfi}{{\bf LFI}}
\newcommand{\cpl}{\text{\bf CPL}}
\newcommand{\dacdot}{{\bf J3}}
\newcommand{\lfium}{{\bf LFI1}}
\newcommand{\lfiuml}{{\bf LFI1'}}
\newcommand{\qlfium}{{\bf QLFI1}}
\newcommand{\cons}{\ensuremath{{\circ}}}
\newcommand{\defin}{\ensuremath{~\stackrel{\text{{\tiny def }}}{=}~}}
\newcommand{\mA}{\ensuremath{\mathcal{A}}}
\newcommand{\mB}{\ensuremath{\mathcal{B}}}
\newcommand{\matM}{\ensuremath{\mathcal{M}}}
\newcommand{\qmbceq}{\ensuremath{\textbf{QmbC}^\approx}}
\newcommand{\termvalue}[1] {\lbrack\!\lbrack #1 \rbrack\!\rbrack}
\newcommand{\aptz}{\ensuremath{\mA_{LFI1}}}
\newcommand{\A}{\ensuremath{\mathcal{A}}}
\newcommand{\B}{\ensuremath{\mathcal{B}}}
\newcommand{\tA}{\ensuremath{T_\mathcal{A}}}
\newcommand{\tmA}{\ensuremath{\mathcal{T}_\mathcal{A}}}
\newcommand{\matA}{\ensuremath{\mathcal{MT}_\mathcal{A}}}
\newcommand{\axdneg}{\textbf{dneg}}
\newcommand{\axnegou}{\ensuremath{\textbf{neg}\lor}}
\newcommand{\axnege}{\ensuremath{\textbf{neg}\land}}
\newcommand{\axnegimp}{\ensuremath{\textbf{neg}\imp}}
\newcommand{\axci}{\textbf{ci}}
\newcommand{\wneg}{\ensuremath{\lnot}}
\newcommand{\sneg}{\ensuremath{{\sim}}}
\newcommand{\imp}{\rightarrow}
\newcommand{\valpred}{\textit{vPred}}
\newcommand{\valou}{\textit{vOr}}
\newcommand{\vale}{\textit{vAnd}}
\newcommand{\valimp}{\textit{vImp}}
\newcommand{\valnot}{\textit{vNeg}}
\newcommand{\valbola}{\textit{vCon}}
\newcommand{\valvar}{\textit{vVar}}
\newcommand{\valex}{\textit{vEx}}
\newcommand{\valuni}{\textit{vUni}}
\newcommand{\valsubs}{\textit{vSubs}}
\newcommand{\valeq}{\textit{vEq}}
\newcommand{\sent}{$Sen(\Theta)$}
\newcommand{\tert}{$Ter(\Theta)$}
\newcommand{\fort}{$For(\Theta)$}
\newcommand{\ctert}{$CTer(\Theta)$}
\newcommand{\sena}{\ensuremath{Sen(\Theta_U)}}
\newcommand{\fora}{$For(\Theta_U)$}
\begin{document}

\title{First-order swap structures semantics for some Logics of Formal Inconsistency}

\author{Marcelo E.~Coniglio$^{1}$, Aldo Figallo-Orellano$^{2}$ and Ana C. Golzio$^{3}$\\ [2mm] %
{\small $^1$Institute of Philosophy and the Humanities (IFCH) and}\\
{\small Centre for Logic, Epistemology and The History of Science (CLE),}\\
{\small University of Campinas (UNICAMP), Campinas, SP, Brazil.}\\
{\small E-mail:~{coniglio@unicamp.br}}\\[1mm]
{\small $^2$Departmento de Matem\'atica, Universidad Nacional del Sur (UNS),}\\
{\small  Bahia Blanca, Argentina and}\\
{\small Centre for Logic, Epistemology and The History of Science (CLE),}\\
{\small University of Campinas (UNICAMP), Campinas, SP, Brazil.}\\
{\small E-mail:~{aldofigallo@gmail.com}}\\[1mm]
{\small $^3$S\~ao Paulo State University (Unesp), Marilia Campus, Brazil.}\\
{\small E-mail:~{anaclaudiagolzio@yahoo.com.br}}}

\date{}

\maketitle

\begin{abstract}
The logics of formal inconsistency (\lfis, for short) are  paraconsistent logics (that is, logics containing contradictory but  non-trivial theories) having  a consistency connective which allows to recover the {\em ex falso quodlibet} principle in a controlled way. The aim of this paper is considering a novel semantical approach to first-order \lfis\ based on Tarskian structures defined over swap structures, a special class of  multialgebras. The proposed semantical framework generalizes previous aproaches to quantified \lfis\ presented in the literature. The case of \qmbc, the simpler quantified \lfi\ expanding classical logic, will be analyzed in detail. An axiomatic extension of \qmbc\ called $\qlfium_\circ$ is also studied, which is equivalent to the quantified version of da Costa and D'Ottaviano 3-valued logic \dacdot. The semantical structures for this logic turn out to be Tarkian structures based on twist structures. The expansion of \qmbc\ and $\qlfium_\circ$ with a standard equality predicate is also considered.  
\end{abstract}

\noindent {\em Keywords}: First-order logics; Logics of formal inconsistency; Paraconsistent logics; Swap structures; Non-deterministic matrices; Twist structures

\section{Introduction}
\label{intro}

A logic is said to be {\em paraconsistent} if it contains in its language a negation and it has a contradictory theory  (with respect to such negation) which is  non-trivial. Such negation is called a {\em paraconsistent} or {\em non-explosive} negation. This is why paraconsistent logics are said to be {\em tolerant to contradictions}.
The class of paraconsistent logics known as {\em logics of formal inconsistency} (\lfis, for short) was introduced by W. Carnielli and J. Marcos in~\cite{CM02}. In its simplest form, they have a non-explosive negation $\neg$, as well as  a (primitive or derived) {\em consistency connective} $\circ$ which allows to recover the explosion law in a controlled way. 

Defining interesting and ellucidative semantics for paraconsistent logics is a  challenging task for paraconsistentists, taking into account that, in general, these logics are not algebraizable by means of the standard techniques. Several kinds of semantics of non-deterministic character were proposed for these logics, in particular for \lfis. Among them, the non-deterministic matrices (or Nmatrices), introduced by A. Avron and I. Lev in~\cite{avr:lev:01} (see also~\cite{avr:lev:05}) constitutes an interesting and useful semantical framework for dealing with such logics.

The situation is even more delicate in the case of first-order paraconsistent logics. The aim of this paper is considering a novel semantical approach for first-order \lfis\ based on Tarskian structures defined over a special class of  multialgebras called {\em swap structures}, which were introduced by Carnielli and Coniglio in~\cite{CC16}. The proposed semantical framework for quantified \lfis\ generalizes previous aproaches in the literature.

In order to understand what lies behind the non-deterministic semantical framework  for first-order \lfis\ proposed here, let us recall briefly the standard lattice-theoretic algebraic approach to first-order classical logic (\fol, in short). To simplify the exposition, let us consider only sentences instead of formulas with free variables. Let us begin by recalling  that the standard semantics for \fol\ is given by  Tarskian first-order structures. Given such a structure $\mathsf{A}$, in order to evaluate sentences of the given language, it is used the standard two-valued logical matrix for propositional classical logic \cpl\ based on  the two-element Boolean algebra $\mathcal{A}_2$ with domain $A_2=\{0,1\}$, expanded by quantification operators $\tilde{Q}: (\mathcal{P}(A_2)-\{\emptyset\}) \to A_2$ for $Q\in \{\forall, \exists\}$ given, respectively, by
$$\tilde{\forall}(X) =  \bigwedge X \ \mbox{ and } \ \tilde{\exists}(X) =  \bigvee  X.   $$
It is known that this class of semantical structures can be enlarged by considering structures $\mathfrak{A} = \langle U, I_{\mathfrak{A}} \rangle$ defined over a complete Boolean algebra $\mathcal{A}$ instead of the two-element Boolean algebra $\mathcal{A}_2$ (see, for instance, \cite[Supplement--Section~8]{rasiowa}. This means that $I_\mathfrak{A}(P)$ is now a mapping   $I_\mathfrak{A}(P):U^n \to A$ for any $n$-ary predicate symbol $P$, where $A$ is the domain of $\mathcal{A}$. The interpretation of sentences in \fol\ is then given in the context $(\mathfrak{A},\mathcal{M}_\A)$, where $\mathcal{M}_\A=\langle \A,\{1\}\rangle$ constitutes a logical matrix for \cpl\ in which $1$ is the only designated value, and where the quantifiers are interpreted {\em mutatis mutandis} as in the case of $\mathcal{A}_2$. This kind of algebraic approach to quantified logics was firstly proposed by Mostowski in~\cite{mostowski}, and later on generalized by Henkin in~\cite{henkin} and Rasiova and Sikorski (see~\cite{rasiowa-sikorski,rasiowa}). This approach was recently extended by Cintula and Noguera in~\cite{Ci:Nog:15} to first-order logics based on algebraizable logics in the sense of Blok and Pigozzi.

What is proposed here for \qmbc\  is a generalization of the algebraic approach to \fol\ mentioned above, in which the logical matrix $\mathcal{M}_\A$ is replaced by a non-deterministic matrix $\mathcal{M}(\B)$ for \mbc, where \B\ is a swap structure  (a multialgebra of a special kind) for \mbc\ which is induced by a complete Boolean algebra \A. That is, sentences of \qmbc\ will be interpreted in contexts $(\mathfrak{A},\mathcal{M}(\B))$ for any swap structure \B\ for \mbc. The analogy with the semantics of  \fol\ is more accurate than it first appears: while \fol\ is fully characterized by the class of contexts $(\mathfrak{A},\mathcal{M}_2)$ such that $\mathcal{M}_2=\langle \mathcal{A}_2, \{1\}\rangle$ (which is equivalent, up to presentation, to the class of standard Tarskian structures with the usual semantics), \qmbc\ can be characterized by contexts $(\mathfrak{A},\mathcal{M}_5)$ defined over the 5-element Nmatrix $\mathcal{M}_5$, which is the one induced by the greatest swap structure defined over  $\mathcal{A}_2$, as we shall see in Section~\ref{compM5}. This is why semantical contexts for \qmbc\ of the form $(\mathfrak{A},\mathcal{M}_5)$ are considered to be `classical', in analogy to \fol. It is worth noting that the semantics for \qmbc\ given by the `classical' contexts coincide, up to notational aspects, with the non-deterministic semantics proposed by Avron and Zamansky in~\cite{avr:zam:07}.

An interesting feature of the mutialgebraic semantics sudied here is that, by adding axioms to a given \lfi, conditions on the multioperations, and even on the domain of the swap structures, naturally arise. In particular, consider the 3-valued $\lfium_\cons$, which is equivalent (up to language) to several well-known 3-valued logics such as da Costa-D'Ottaviano logic \dacdot. As shown in~\cite{CFG18}, the swap structures for $\lfium_\cons$, which is an axiomatic extension of \mbc, are deterministic, and as such they become  twist structures, that is, ordinary agebras of a certain kind. From this, the first-order swap structures for $\qlfium_\circ$, the quantified extension of $\lfium_\cons$, become first-order twist structures. As we shall see, when the `classical' structures (that is, the twist structures induced by the Boolean algebra $\mathcal{A}_2$) are considered, the corresponding first-order structures are defined over the characteristic 3-valued logical matrix for $\lfium_\cons$, hence this semantics is equivalent (up to presentation) with the early 3-valued model theory for quantified \dacdot\ introduced in~\cite{dot:82}, \cite{dot:85a}, \cite{dot:85b} and~\cite{dot:87}.

This paper is organized as follows: in the first sections, the swap structures semantics for \mbc\ will be recalled.   In Section~\ref{swapfol} a semantics based on swap stuctures for \qmbc\ will be introduced, proving in the following sections the corresponding soundness and completeness theorems. As we shall see, the swap structures semantics generalizes the interpretation semantics for \qmbc\ given in~\cite{CC16}, as well as the Nmatrix semantics proposed in~\cite{avr:zam:07}.
The extension of \qmbc\ by adding a standard equality predicate will be  analyzed in Sections~\ref{secEq} and~\ref{compM5eq}. The analysis of $\qlfium_\circ$,  whose semantics is based on twist structures, will be done in  Sections ~\ref{Qtwist} to~\ref{comp3val}. Some conclusions are given in the last section.

\section{The logic \mbc} 

In this section, the notion of logics of formal inconsistency will be recalled, and the basic \lfi\ called \mbc\ will be briefly described.
Let $\Sigma'$ be a propositional signature, and assume a denumerable set $\mathcal{V}=\{p_1,p_2,\ldots\}$ of propositional variables. The propositional language generated by $\Sigma'$ from $\mathcal{V}$ will be denoted by ${\cal L}_{\Sigma'}$

\begin{definition}
Let  ${\bf L}=\langle  \Sigma',\vdash \rangle$ be a Tarskian, finitary and structural logic defined over a propositional signature $\Sigma'$, which contains a
negation $\neg$, and let  $\circ$ be a (primitive or defined) unary connective. Then,
${\bf L}$ is said to be  a {\em logic of formal inconsistency} with respect to $\neg$ and $\circ$ if the following holds:\footnote{The original definition of \lfis\ allows to consider a set $\bigcirc(p)$ of formulas depending on a single propositional variable $p$, instead of a single formula $\circ p$. See~\cite{CM02,CCM,CC16}.}\\
(i) \ $\varphi,\neg\varphi\nvdash\psi$ for some $\varphi$ and $\psi$;\\
(ii) \ there are two formulas $\alpha$ and $\beta$ such that\\
\indent (ii.a) \ $\circ\alpha,\alpha \nvdash \beta$;\\
\indent (ii.b) \ $\circ\alpha, \neg \alpha \nvdash \beta$; \\
(iii) \ $\circ\varphi,\varphi,\neg\varphi\vdash \psi$ for every $\varphi$ and $\psi$. 
\end{definition}

\noindent Condition~(iii) states that {\em ex falso quodlibet} is controllably recovered in \lfis\  by assuming that the contradictory formula $\varphi$ is consistent, i.e., $\cons\varphi$.

\begin{definition}  (\cite[Definition 2.1.12]{CC16}) Let $\Sigma = \{\wedge,\vee,\to,\neg, \circ\}$ be a propositional signature. The calculus \mbc\ over the propositional language ${\cal L}_{\Sigma}$ is defined by means of the following Hilbert calculus:\\[2mm]
{\bf Axiom schemas:}\\

$\begin{array}{ll}
{\bf (A1)} & \alpha \to(\beta \to \alpha)\\[2mm]
{\bf (A2)} & (\alpha \to (\beta \to \gamma)) \to ((\alpha \to \beta) \to (\alpha \to \gamma ))\\[2mm]
{\bf (A3)} & \alpha \to(\beta \to (\alpha \wedge \beta))\\[2mm]
{\bf (A4)} & (\alpha \wedge \beta)\to \alpha\\[2mm]
{\bf (A5)} & (\alpha \wedge \beta)\to \beta\\[2mm]
{\bf (A6)} & \alpha \to( \alpha \vee \beta)\\[2mm]
{\bf (A7)} & \beta \to( \alpha \vee \beta)\\[2mm]
{\bf (A8)} & (\alpha \to \gamma)\to (( \beta \to \gamma)\to ((\alpha \vee \beta)\to \gamma))\\[2mm]
{\bf (A9)} & \alpha \vee (\alpha \to \beta)\\[2mm]
{\bf (A10)} & \alpha \vee \neg \alpha\\[2mm]
{\bf (A11)} & \circ \alpha \to (\alpha \to (\neg \alpha \to \beta))\\[4mm]
\end{array} $

{\bf Inference rule:}\\

$\begin{array}{ll}
{\bf (MP)} & \dfrac{\alpha ~~~\alpha \to\beta}{\beta}\\
\end{array} $
\end{definition}

\noindent
Observe that \mbc\ is obtained from positive classical logic by adding axioms {\bf (A10)} and {\bf (A11)} governing the new connectives $\neg$ (paraconsistent negation) and $\circ$ (consistency operator). It is easy to see that $\mbc$ is an \lfi. Indeed, it is the least \lfi\ which contains propositional classical logic \cpl\ (see Remark~\ref{univ-Henk}).

\section{Swap structures for \mbc} \label{swapmbc}

It is well-known that \mbc, as well as several axiomatic extensions of it, are neither agebraizable (see~\cite[Section~3.12]{CM02}), nor characterizable by a single finite logical matrix (see for instance~\cite[Theorems~121 and~125]{CCM}). In this section a non-deterministic semantics for \mbc\ based on multialgebras called {\em swap structures}, introduced in~\cite[Chapter~6]{CC16}, will be briefly recalled.
  
\begin{definition} Let $\Omega$ be a propositional signature. A {\em multialgebra} (or {\em hyperalgebra}) over $\Omega$ is a pair $\mathcal{A}= \langle A,\sigma\rangle$ such that $A$ is a nonempty set (the {\em universe} or {\em support} of $\mathcal{A}$) and $\sigma$  is a mapping assigning to each $n$-ary connective $c$, a function (called {\em multioperation} or {\em hyperoperation})  $c^\mathcal{A}:A^n \to(\mathcal{P}(A)-\{\emptyset\})$.  In particular, $\emptyset \neq c^\mathcal{A} \subseteq A$ if $c$ is a constant symbol. 
\end{definition}

\begin{definition} Let $\Omega$ be a propositional signature. A \textit{non-deterministic matrix (or Nmatrix)} is a pair $ \mathcal{M} = \langle \mathcal{A}, D \rangle $ such that $\mathcal{A}= \langle A,\sigma\rangle$ is a multialgebra over $\Omega$ with support $A$, and $D$ is a subset of $A$. The elements in $D$ are called \textit{designated} elements. 
\end{definition}

\begin{Not} \label{Pi1} {\em Let $\mathcal{A}$ be a Boolean algebra with domain $A$. If $x \in A \times A \times A$ then  $(x)_i$ (or simply $x_i$) will denote the $ith$-projection of $x$, that is, $\pi_i(x)$, where $\pi_i$ is the $ith$-canonical projection for $i=1,2,3$.}
\end{Not}

\begin{definition} (\cite[Definition 6.4.1]{CC16}) \label{defswap}
Let ${\cal A} = (A, \wedge, \vee, \to, 0,1)$ be a Boolean algebra, and 
$B_{\cal A} = \{x \in A \times A \times A \ : \ x_1 \vee x_2 = 1 \ \mbox{ and } \ x_1 \wedge x_2 \wedge x_3 = 0\}$.
A {\em swap structure for \mbc\ over ${\cal A}$} is any multialgebra $ {\cal B} = (B , \tilde{\wedge}, \tilde{\vee}, \tilde{\to}, \tilde{\neg}, \tilde{\circ})$ over $\Sigma$ 
such that $B \subseteq B_{\cal A}$ and where the multioperations satisfy the following, for every $x$ and
$y$ in $B$:\\[1mm]
(i) $\emptyset\not= x\tilde{\#}y \subseteq \{ z\in B \ : \ z_1=x_1\# y_1\}$, for each $\# \in \{\wedge,\vee,\to\}$;\\
(ii) $\emptyset\not= \tilde{\neg} x \subseteq \{ z\in B \ : \ z_1=x_2\}$;\\
(iii) $\emptyset\not=\tilde{\circ} x \subseteq \{ z\in B \ : \ z_1=x_3\}$.
\end{definition}

\begin{definition} \label{deffull}
The  {\em full  swap structure  for \mbc\ over  \mA},  denoted by ${\cal B}_{\cal A}$, is the unique swap structure for {\bf mbC} over \mA\ with domain $B_{\cal A}$, in which `$\subseteq$' is replaced by `$=$' in items (i)-(iii) of Definition~\ref{defswap}.
\end{definition}

\noindent Observe that  ${\cal B}_{\cal A}$ is the greatest swap structure for \mbc\ over $\mathcal{A}$ (see~\cite{CFG18}).
The elements of a given swap structure are called  {\em snapshots}. This terminology is inspired by its use in computer science to refer to states. Accordingly, a triple $(a, b, c)$ of a swap structure $\cal B$ keeps track simultaneously of the value $a$ of a given formula $\varphi$, a possible value $b$ for $\neg \varphi$, and a possible value $c$ for $\circ \varphi$. 

Given that any swap structure is a multialgebra, the consequence relation over swap structures will be defined by means of non-deterministic matrices, in analogy with the corresponding notion for twist structures.

\begin{definition} (\cite[Definition 6.4.3]{CC16}) \label{defNmat}  Given a Boolean algebra $\cal A$ and a swap structure $\cal B$ for {\bf mbC} over {\cal A} with domain $B$, let $D_B = \{x\in B \ : \ x_1 = 1	\}$ be the set of {\em designated} elements. 
The non-deterministic matrix associated to $\cal B$ is ${\cal M}({\cal B}) =({\cal B}, D_B)$ (or simply ${\cal M}({\cal B}) =(\mathcal{B},D)$).
The Nmatrix associated to ${\cal B}_{\cal A}$ will be denoted by ${\cal M}_{\cal A}$. 
The class of all the Nmatrices defined by swap structures for {\bf mbC} will be denoted by $\mathbb{M}_{\mbc}$, that is:
$$\mathbb{M}_{\mbc} = \{ {\cal M}({\cal B}) \ : \ {\cal B}\,\, \mbox{ is a swap structure for \mbc\ over ${\cal A}$, for some ${\cal A}$}\}.$$ 
\end{definition}

\begin{definition} \label{defval} Let ${\cal M}({\cal B}) \in \mathbb{M}_{\mbc}$. A {\em valuation} over ${\cal M}({\cal B})$ is a function $v:{\cal L}_{\Sigma}\to |\mathcal{B}|$ such that, for every $\varphi,\psi \in {\cal L}_{\Sigma}$:\\
(i) \  $v(\varphi \# \psi) \in v(\varphi) \tilde{\#} v(\psi)$, for every $\#\in \{\wedge,\vee, \to\}$;\\
(ii) \ $v(\neg\varphi) \in \tilde{\neg} v(\varphi)$;\\
(iii) \ $v(\circ\varphi) \in \tilde{\circ} v(\varphi)$.
\end{definition}

\begin{definition}  \label{semNmat} 
Let ${\cal M}({\cal B}) \in \mathbb{M}_{\mbc}$, and let $\Gamma \cup \{\varphi\} \subseteq {\cal L}_{\Sigma}$. We say that $\varphi$ is a {\em consequence} of $\Gamma$  in ${\cal M}({\cal B})$, denoted by $\Gamma\models_{{\cal M}({\cal B})} \varphi$, if the following holds: for every valuation  $v$ over ${\cal M}({\cal B})$, if $v[\Gamma] \subseteq D$ then $v(\varphi) \in D$. In particular, $\varphi$ is {\em valid} in ${\cal M}({\cal B})$, denoted by $\models_{{\cal M}({\cal B})} \varphi$, if $v(\varphi) \in D$ for every valuation  $v$ over ${\cal M}({\cal B})$. The {\em swap consequence relation $\models_{\mathbb{M}_{\mbc}}$ for \mbc} is given by: $\Gamma \models_{\mathbb{M}_{\mbc}} \varphi$ whenever $\Gamma\models_{{\cal M}({\cal B})} \varphi$ for every ${\cal M}({\cal B}) \in \mathbb{M}_{\mbc}$.
\end{definition}

\begin{theorem} {\em (Adequacy of \mbc\ w.r.t. swap structures, \cite[Theorem~6.4.8]{CC16} and~\cite[Theorem~7.1]{CFG18})} \label{adeq-mbC}
For every $\Gamma \cup\{ \varphi\} \subseteq {\cal L}_{\Sigma}$:  $\Gamma \vdash_\mbc \varphi$ \ iff \ $\Gamma \models_{\mathbb{M}_{\mbc}} \varphi$.
\end{theorem}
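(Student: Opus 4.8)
The plan is to prove the two directions of the adequacy (soundness and completeness) separately, following the standard schema for Hilbert-style calculi with respect to (non-deterministic) matrix semantics, but taking care to handle the multialgebraic/non-deterministic features.

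\medskip

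\noindent\textbf{Soundness.} First I would show that if $\Gamma \vdash_\mbc \varphi$ then $\Gamma \models_{\mathbb{M}_{\mbc}} \varphi$. Fix an arbitrary Nmatrix $\mathcal{M}(\mathcal{B}) \in \mathbb{M}_{\mbc}$ and a valuation $v$ over it with $v[\Gamma] \subseteq D$. I would verify, by induction on the length of a derivation of $\varphi$ from $\Gamma$, that $v(\varphi) \in D$. The base cases split into: (a) $\varphi \in \Gamma$, which is immediate; and (b) $\varphi$ is an instance of one of the axiom schemas {\bf (A1)}--{\bf (A11)}, for which I must check $v(\varphi)_1 = 1$ using only clauses (i)--(iii) of Definition~\ref{defswap} together with the fact that the first coordinates of snapshots live in a Boolean algebra $\mathcal{A}$ and the Boolean operations on first coordinates validate positive classical logic plus the schemas {\bf (A10)}, {\bf (A11)}. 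For instance, for {\bf (A10)} $\alpha \vee \neg\alpha$: by clause (i) the first coordinate of $v(\alpha \vee \neg\alpha)$ equals $v(\alpha)_1 \vee v(\neg\alpha)_1$, and by clause (ii) $v(\neg\alpha)_1 = v(\alpha)_2$, so the first coordinate is $v(\alpha)_1 \vee v(\alpha)_2 = 1$ by the defining condition of $B_{\mathcal{A}}$; similarly for {\bf (A11)} one uses $v(\alpha)_1 \wedge v(\alpha)_2 \wedge v(\alpha)_3 = 0$. The inductive step for {\bf (MP)} is the usual argument: if $v(\alpha)_1 = 1$ and $v(\alpha \to \beta)_1 = 1$, then since $v(\alpha \to \beta)_1 = v(\alpha)_1 \to v(\beta)_1 = 1 \to v(\beta)_1 = v(\beta)_1$ we get $v(\beta)_1 = 1$.

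\medskip

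\noindent\textbf{Completeness.} For the converse I would use a Lindenbaum--Henkin style argument. Assume $\Gamma \nvdash_\mbc \varphi$. By the standard Lindenbaum lemma for Tarskian finitary logics, extend $\Gamma$ to a set $\Delta$ that is maximal with respect to not deriving $\varphi$; such $\Delta$ is a (nontrivial, $\varphi$-saturated) closed theory of \mbc, and in particular it is closed under {\bf (MP)} and behaves classically on the positive fragment. The next step is to build a swap structure and a valuation refuting $\varphi$. The natural choice is to take the canonical bivaluation $b_\Delta : {\cal L}_\Sigma \to \{0,1\}$ with $b_\Delta(\psi) = 1$ iff $\psi \in \Delta$, and then define a valuation $v$ over (a swap structure contained in) the full swap structure ${\cal M}_{\mathcal{A}_2}$ over the two-element Boolean algebra $\mathcal{A}_2$ by
$$v(\psi) = \big(b_\Delta(\psi),\, b_\Delta(\neg\psi),\, b_\Delta(\circ\psi)\big).$$
One checks that each $v(\psi)$ indeed lies in $B_{\mathcal{A}_2}$: the condition $v(\psi)_1 \vee v(\psi)_2 = 1$ holds because {\bf (A10)} $\psi \vee \neg\psi$ is a theorem hence in $\Delta$ and $\Delta$ is prime on disjunctions (a property of saturated theories via {\bf (A8)}, {\bf (A9)}), and the condition $v(\psi)_1 \wedge v(\psi)_2 \wedge v(\psi)_3 = 0$ holds because {\bf (A11)} forbids $\psi, \neg\psi, \circ\psi$ all belonging to $\Delta$ (else $\Delta$ would be trivial). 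Then one verifies that $v$ is a legitimate valuation over the full swap structure $\mathcal{M}_{\mathcal{A}_2}$: clause (i) of Definition~\ref{defval} amounts to $b_\Delta(\psi_1 \# \psi_2) = b_\Delta(\psi_1) \# b_\Delta(\psi_2)$ for $\# \in \{\wedge,\vee,\to\}$, which follows from $\Delta$ being a closed theory behaving classically on positive connectives; clauses (ii) and (iii) are automatic since in the full swap structure $\tilde{\neg}x$ (resp.\ $\tilde{\circ}x$) contains \emph{every} snapshot whose first coordinate is $x_2$ (resp.\ $x_3$), and $v(\neg\psi)$ and $v(\circ\psi)$ are such snapshots by construction. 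Finally $v[\Delta] \subseteq D$ since $\psi \in \Delta$ gives $v(\psi)_1 = 1$, while $v(\varphi)_1 = b_\Delta(\varphi) = 0$ because $\varphi \notin \Delta$, so $\Gamma \models_{\mathcal{M}_{\mathcal{A}_2}} \varphi$ fails, whence $\Gamma \models_{\mathbb{M}_{\mbc}} \varphi$ fails.

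\medskip

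\noindent\textbf{Main obstacle.} The soundness direction is essentially bookkeeping through the eleven axiom schemas. The real work is in the completeness direction, and the delicate point is establishing that the canonical map $v$ built from a $\varphi$-saturated theory $\Delta$ is genuinely a valuation over a swap structure \emph{and} that the relevant snapshots satisfy the two defining equations of $B_{\mathcal{A}_2}$ — in other words, that the Lindenbaum--Henkin theory of \mbc\ induces exactly the bivaluations that Definition~\ref{defswap} codifies. This requires the characterization of closed, nontrivial, $\varphi$-saturated theories of \mbc\ (primeness of disjunctions, classical behaviour of $\to,\wedge,\vee$, and the constraints coming from {\bf (A10)} and {\bf (A11)}), which is precisely the content cited from~\cite[Theorem~6.4.8]{CC16}; I would invoke those structural facts about \mbc-theories rather than re-derive them. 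A secondary subtlety is that one only needs to refute $\varphi$ in \emph{some} member of $\mathbb{M}_{\mbc}$, and the full swap structure $\mathcal{M}_{\mathcal{A}_2}$ over the two-element Boolean algebra already suffices; no larger Boolean algebra is needed for propositional completeness.
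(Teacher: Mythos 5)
Your proposal is correct and follows essentially the same route as the proof the paper cites from \cite[Corollary~6.4.10]{CC16} and recounts in Section~4: soundness by induction on derivations using the two defining equations of $B_{\mathcal{A}}$, and completeness by passing through a maximal non-trivial theory $\Delta$, its characteristic bivaluation, and the induced valuation $v(\psi)=(b_\Delta(\psi),b_\Delta(\neg\psi),b_\Delta(\circ\psi))$ over the full swap structure on $\mathcal{A}_2$ (i.e.\ Theorems~\ref{comp-bival-mbC} and~\ref{val-bival-mbC} combined). The only remark worth adding is that the second cited source, \cite[Theorem~7.1]{CFG18}, proves completeness differently --- via the Lindenbaum--Tarski Boolean algebra $\mathcal{A}_\Delta$ of equivalence classes and the full swap structure over it, which is the construction this paper actually generalizes in Section~\ref{complete} --- but your two-element-algebra argument is a legitimate and more economical proof of the propositional statement.
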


\section{The 5-valued characteristic Nmatrix $\mathcal{M}_5$ for \mbc} \label{M5}

It is illustrative to compare Theorem~\ref{adeq-mbC} with the adequacy of classical propositional logic \cpl\ w.r.t. Boolean algebras semantics. As it is well known, it is enough to consider just one Boolean algebra to semantically characterize \cpl, namely the two-element Boolean algebra $\mA_2$ with domain $A_2=\{0,1\}$ and the associated logical matrix with $1$ as designated value. In the case of swap structures semantics, it is enough to consider the Nmatrix $\mathcal{M}_5=\mathcal{M}\big(\mathcal{B}_{\mA_2}\big)$ induced by the full swap structure $\mathcal{B}_{\mA_2}$ defined over  $\mA_2$. The Nmatrix $\mathcal{M}_5$ was originally introduced by A. Avron in  \cite{avr:05} to semantically characterize \mbc. Observe that  $B_{\mA_2} = \big\{T, \, t, \, t_0, \, F, \, f_0\big\}$ where $T=(1,0,1)$, $t=(1,1,0)$, $t_0=(1,0,0)$, $F=  (0,1,1)$, and $f_0=(0,1,0)$. 
The set D of designated elements of $\mathcal{M}_5$ is $\textrm{D}=\{T, \, t, \, t_0\}$, while $\textrm{ND}=\big\{F, \, f_0\big\}$ is the set of non-designated truth-values. 
The multioperations proposed by Avron over the set $B_{\mA_2}$ coincide  with  the corresponding ones for $\mathcal{B}_{\mA_2}$, and so his 5-valued Nmatrix coincides with $\mathcal{M}\big(\mathcal{B}_{\mA_2}\big)$. Observe that the swap structure of $\matM_5$ is defined as follows:

{\small
\begin{center}
\begin{tabular}{|c|c|c|c|c|c|}
\hline
 $\wedge^{\matM_5}$ & $T$   & $t$ & $t_0$ & $F$ & $f_0$ \\
 \hline \hline
    $T$    & D  & D & D & ND & ND   \\ \hline
     $t$    & D  & D & D & ND & ND  \\ \hline
     $t_0$    & D  & D & D & ND & ND  \\ \hline
     $F$    & ND  & ND & ND & ND & ND  \\ \hline
     $f_0$    & ND  & ND & ND & ND & ND  \\ \hline
\end{tabular}
\hspace{0.5cm}
\begin{tabular}{|c|c|c|c|c|c|}
\hline
 $\vee^{\matM_5}$ & $T$   & $t$ & $t_0$ & $F$ & $f_0$ \\
 \hline \hline
    $T$    & D  & D & D & D & D   \\ \hline
     $t$    & D  & D & D & D & D  \\ \hline
     $t_0$    & D  & D & D & D & D  \\ \hline
     $F$    & D  & D & D & ND & ND  \\ \hline
     $f_0$    & D  & D & D & ND & ND  \\ \hline
\end{tabular}
\end{center}

\begin{center}
\begin{tabular}{|c|c|c|c|c|c|}
\hline
 $\to^{\matM_5}$ & $T$   & $t$ & $t_0$ & $F$ & $f_0$ \\
 \hline \hline
    $T$    & D  & D & D & ND & ND   \\ \hline
     $t$    & D  & D & D & ND & ND  \\ \hline
     $t_0$    & D  & D & D & ND & ND  \\ \hline
     $F$    & D  & D & D & D & D  \\ \hline
     $f_0$    & D  & D & D & D & D  \\ \hline
\end{tabular}
\hspace{0.5cm}
\begin{tabular}{|c||c|} \hline
$\quad$ & $\neg^{\matM_5}$ \\
 \hline \hline
    $T$   & ND    \\ \hline
     $t$   & D    \\ \hline
     $t_0$   & ND    \\ \hline
     $F$   & D    \\ \hline
     $f_0$   & D    \\ \hline
\end{tabular}
\hspace{0.5cm}
\begin{tabular}{|c||c|}
\hline
 $\quad$ & $\circ^{\matM_5}$ \\
 \hline \hline
    $T$   & D    \\ \hline
     $t$   & ND    \\ \hline
     $t_0$   & ND    \\ \hline
     $F$   & D    \\ \hline
     $f_0$   & ND    \\ \hline
\end{tabular}
\end{center}
}

\

\begin{theorem} {\em (Adequacy of \mbc\ w.r.t. $\matM_5$, \cite[Theorem~3.6]{avr:05})} \label{comp-mbc-full}
For every set of formulas $\Gamma \cup \{\varphi\} \subseteq {\cal L}_{\Sigma}$: 
$\Gamma \vdash_\mbc \varphi$ \ iff \  $\Gamma\models_{\mathcal{M}_5} \varphi$.
\end{theorem}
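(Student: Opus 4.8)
The plan is to derive the adequacy of \mbc\ with respect to $\matM_5$ directly from Theorem~\ref{adeq-mbC}, the adequacy of \mbc\ with respect to the full class $\mathbb{M}_{\mbc}$ of swap-structure Nmatrices. Since $\matM_5 = \mathcal{M}\big(\mathcal{B}_{\mA_2}\big) \in \mathbb{M}_{\mbc}$, the soundness direction is immediate: if $\Gamma \vdash_\mbc \varphi$, then $\Gamma \models_{\mathbb{M}_{\mbc}} \varphi$ by Theorem~\ref{adeq-mbC}, and in particular $\Gamma \models_{\matM_5} \varphi$. So the whole content lies in the completeness direction, namely: if $\Gamma \models_{\matM_5} \varphi$, then $\Gamma \vdash_\mbc \varphi$; equivalently, by contraposition and Theorem~\ref{adeq-mbC} again, it suffices to show that whenever $\Gamma \not\models_{\matM_5} \varphi$ fails to follow, i.e.\ whenever there is \emph{some} swap structure $\mathcal{B}$ over \emph{some} Boolean algebra $\mathcal{A}$ and a valuation $v$ over $\mathcal{M}(\mathcal{B})$ with $v[\Gamma] \subseteq D$ and $v(\varphi) \notin D$, then there is also such a valuation over $\matM_5$ itself. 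In other words, the key lemma to prove is that $\matM_5$ is already \emph{complete} for the swap consequence relation: $\Gamma \models_{\matM_5} \varphi$ implies $\Gamma \models_{\mathbb{M}_{\mbc}} \varphi$.

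First I would fix an arbitrary Boolean algebra $\mathcal{A}$, a swap structure $\mathcal{B}$ for \mbc\ over $\mathcal{A}$, and a valuation $v$ over $\mathcal{M}(\mathcal{B})$ refuting $\Gamma \vdash \varphi$. The idea is to collapse $\mathcal{A}$ onto $\mA_2$ and push $v$ forward along this collapse. Pick an ultrafilter $\nabla$ of $\mathcal{A}$; this induces a Boolean homomorphism $h : \mathcal{A} \to \mA_2$ with $h(a) = 1$ iff $a \in \nabla$. I would choose $\nabla$ so that it separates $v(\varphi)$ from the designated set if $v(\varphi)_1 \neq 1$ — concretely, since $v(\varphi)_1 \neq 1$, there is an ultrafilter $\nabla$ with $v(\varphi)_1 \notin \nabla$, so that $h(v(\varphi)_1) = 0$. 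Now define a map $\hat h : |\mathcal{B}| \to B_{\mA_2}$ by $\hat h(x) = (h(x_1), h(x_2), h(x_3))$. One checks that $\hat h(x) \in B_{\mA_2}$: from $x_1 \vee x_2 = 1$ we get $h(x_1) \vee h(x_2) = 1$, and from $x_1 \wedge x_2 \wedge x_3 = 0$ we get $h(x_1) \wedge h(x_2) \wedge h(x_3) = 0$. Then I would set $v' = \hat h \circ v$ and verify that $v'$ is a valuation over $\matM_5$: for each connective this reduces to checking that $\hat h$ respects the multioperation constraints, which hold because (a) the first coordinate of a multioperation output in $\mathcal{B}$ is \emph{determined} (by $x_1 \# y_1$, by $x_2$, by $x_3$ respectively, per Definition~\ref{defswap}), and $h$ is a Boolean homomorphism, while (b) in $\matM_5 = \mathcal{M}(\mathcal{B}_{\mA_2})$ the multioperations are the \emph{full} ones over $B_{\mA_2}$, so the only real constraint to meet is precisely the one on the first coordinate — any triple in $B_{\mA_2}$ with the correct first coordinate is an admissible output. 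Since designation in both $\mathcal{M}(\mathcal{B})$ and $\matM_5$ is "first coordinate equals $1$", and $h$ preserves and reflects $1$ appropriately, $v'[\Gamma] \subseteq D$ and $v'(\varphi) \notin D$, giving $\Gamma \not\models_{\matM_5} \varphi$, as desired.

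Assembling: completeness of $\matM_5$ for $\models_{\mathbb{M}_{\mbc}}$ (the contrapositive just argued) combined with Theorem~\ref{adeq-mbC} yields $\Gamma \models_{\matM_5} \varphi \implies \Gamma \models_{\mathbb{M}_{\mbc}} \varphi \implies \Gamma \vdash_\mbc \varphi$, and soundness is the trivial direction noted above; together these give the biconditional.

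\textbf{Main obstacle.} I expect the delicate point to be the verification that $v' = \hat h \circ v$ is genuinely a valuation over $\matM_5$ — specifically confirming that, for the \emph{non-deterministic} connectives $\neg$ and $\circ$, the value $v'(\neg\psi) = \hat h(v(\neg\psi))$ indeed lands in $\tilde\neg^{\matM_5}(v'(\psi))$ and similarly for $\circ$. This works only because $\matM_5$ is built from the \emph{full} swap structure, so its multioperations impose no constraint beyond the first coordinate; had we tried to land in a proper (non-full) swap structure over $\mA_2$ the argument would break. A secondary subtlety is the choice of ultrafilter: one must ensure a single $\nabla$ simultaneously witnesses $v(\varphi)_1 \notin \nabla$ — which is fine since $v(\varphi)_1 \neq 1$ in $\mathcal{A}$ forces $v(\varphi)_1$ to lie outside some ultrafilter — while automatically $v[\Gamma] \subseteq D$ is preserved because each $\gamma \in \Gamma$ has $v(\gamma)_1 = 1$, hence $h(v(\gamma)_1) = 1$ for \emph{every} ultrafilter. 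So in fact no conflict arises, but this should be stated carefully. Alternatively, one could bypass ultrafilters entirely and invoke the known fact (from~\cite{avr:05}) that $\matM_5$ characterizes \mbc\ together with Theorem~\ref{adeq-mbC}, but giving the direct swap-structure collapse argument is more in the spirit of the paper and makes the later first-order generalization transparent.
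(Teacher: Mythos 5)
Your argument is correct, and it takes a genuinely different route from the one the paper relies on. The paper (following \cite[Corollary~6.4.10]{CC16}) obtains completeness w.r.t.\ $\matM_5$ by passing through bivaluations: given $\Gamma\nvdash_\mbc\varphi$, Theorem~\ref{comp-bival-mbC} supplies a bivaluation countermodel $\rho$, and Theorem~\ref{val-bival-mbC} lifts it to the $\matM_5$-valuation $v^\rho(\alpha)=(\rho(\alpha),\rho(\neg\alpha),\rho(\cons\alpha))$; soundness is, as you say, the trivial direction since $\matM_5\in\mathbb{M}_\mbc$. You instead take the completeness half of Theorem~\ref{adeq-mbC} as the black box and collapse an arbitrary swap-structure countermodel over a Boolean algebra $\mathcal{A}$ down to $\matM_5$ via an ultrafilter $\nabla$ separating $v(\varphi)_1$ from $1$. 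The verification goes through exactly as you describe: every constraint a valuation must satisfy in a swap structure for \mbc\ concerns only the \emph{first} coordinate of the output (it equals $x_1\# y_1$, $x_2$, or $x_3$ respectively), the Boolean homomorphism $h$ induced by $\nabla$ preserves these identities, $\hat h$ lands inside $B_{\mA_2}$ because $h$ preserves the defining equations of $B_\mathcal{A}$, and fullness of $\mathcal{B}_{\mA_2}$ guarantees that matching the first coordinate is sufficient for membership in the multioperation's output set — your observation that the argument would fail for a non-full target is exactly right, and your remark that $v[\Gamma]\subseteq D$ survives any choice of ultrafilter (since $h(1)=1$ always) disposes of the only place where a conflict could arise, including for infinite $\Gamma$. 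What each approach buys: the paper's route reuses machinery (bivaluations) that it needs anyway and that historically came first; your route is purely semantic, makes the reduction from arbitrary complete models to the two-element "classical" model explicit, and is indeed closer in spirit to the reduction the paper later wants at the first-order level (Section~\ref{compM5}) — though note that there the paper again goes through bivaluations/interpretations rather than an ultrafilter collapse, essentially because quantifiers require the homomorphism to preserve the relevant infinite infima and suprema, which an arbitrary ultrafilter does not; so the propositional case is precisely where your shortcut is available.
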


\noindent
A new proof of this result was obtained in~\cite[Corollary~6.4.10]{CC16}, by using bivaluations for \mbc\  in connection with the Nmatrix $\mathcal{M}\big(\mathcal{B}_{\mA_2}\big)$.

\begin{definition} (\cite[Definition~54]{CCM}) \label{bivalold}
A function $\rho:{\cal L}_{\Sigma}\to \big\{0,1\big\}$ is a {\em bivaluation for \mbc}
if it  satisfies the   following clauses:\\[1mm]
{\bf (\vale)} \ $\rho(\alpha \land \beta) = 1$  \ iff \  $\rho(\alpha) = \rho(\beta) = 1$ \\[1mm]
{\bf (\valou)} \ $\rho(\alpha \lor \beta) = 1$ \  iff \ $\rho(\alpha) = 1$ \ or \
$\rho(\beta) = 1$ \\[1mm]
{\bf (\valimp)} \ $\rho(\alpha \to \beta) = 1$ \  iff \  $\rho(\alpha) = 0$ \ or \
$\rho(\beta) = 1$ \\[1mm]
{\bf (\valnot)} \ $\rho(\lnot \alpha)=0$ \ implies  \ $\rho(\alpha)=1$ \\[1mm]
{\bf (\valbola)} \ $\rho(\cons \alpha) = 1$ \ implies \ $\rho(\alpha)=0$ \ or \ $\rho(\lnot \alpha)=0$. \\[1mm]
The consequence relation of \mbc\ w.r.t. bivaluations, denoted by  $\models_\mbc^2$, is given by: $\Gamma \models_\mbc^2 \varphi$ \ iff \ $\rho(\varphi)=1$ for every bivaluation for \mbc\ such that $\rho[\Gamma] \subseteq \{1\}$.
\end{definition}

\begin{theorem} {\em (Adequacy of \mbc\ w.r.t. bivaluations, \cite[Theorems~56 and~61]{CCM})}  \label{comp-bival-mbC}
For every set of formulas $\Gamma \cup \{\varphi\} \subseteq {\cal L}_{\Sigma}$: 
$\Gamma \vdash_\mbc \varphi$ \ iff \  $\Gamma\models_{\mbc}^2 \varphi$.
\end{theorem}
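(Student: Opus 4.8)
The statement to prove is Theorem~\ref{comp-bival-mbC}, the adequacy of \mbc\ with respect to bivaluations. Although the paper cites it from \cite{CCM}, a natural self-contained argument proceeds in two halves: soundness ($\Gamma \vdash_\mbc \varphi$ implies $\Gamma \models_\mbc^2 \varphi$) and completeness ($\Gamma \models_\mbc^2 \varphi$ implies $\Gamma \vdash_\mbc \varphi$).

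\medskip

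\noindent\textbf{Proof proposal.}

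The plan is to prove the two directions separately. For \emph{soundness}, I would first check that each axiom schema {\bf (A1)}--{\bf (A11)} receives value $1$ under every bivaluation $\rho$, and that {\bf (MP)} preserves value $1$. Axioms {\bf (A1)}--{\bf (A9)} are the positive classical fragment, so clauses {\bf (\vale)}, {\bf (\valou)}, {\bf (\valimp)} handle them exactly as in the classical truth-table verification. For {\bf (A10)} $\alpha \lor \neg\alpha$: if $\rho(\alpha)=1$ we are done by {\bf (\valou)}; if $\rho(\alpha)=0$ then by the contrapositive of {\bf (\valnot)} we get $\rho(\neg\alpha)=1$, so again {\bf (\valou)} applies. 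For {\bf (A11)} $\circ\alpha \to (\alpha \to (\neg\alpha \to \beta))$: suppose for contradiction it is $0$; then by {\bf (\valimp)} applied three times, $\rho(\circ\alpha)=\rho(\alpha)=\rho(\neg\alpha)=1$ and $\rho(\beta)=0$, but $\rho(\circ\alpha)=1$ forces $\rho(\alpha)=0$ or $\rho(\neg\alpha)=0$ by {\bf (\valbola)}, a contradiction. Preservation under {\bf (MP)} is immediate from {\bf (\valimp)}. A routine induction on the length of derivations from $\Gamma$ then gives soundness.

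\medskip

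For \emph{completeness}, I would use the standard Lindenbaum--Henkin maximal-nontrivial-set construction adapted to \mbc. Assume $\Gamma \nvdash_\mbc \varphi$. Using the fact that \mbc\ contains positive classical logic (hence satisfies the deduction metatheorem for $\to$) and is finitary, extend $\Gamma$ to a set $\Delta$ that is maximal with respect to the property $\Delta \nvdash_\mbc \varphi$ (a \emph{$\varphi$-saturated} set). Such a $\Delta$ is a closed theory that is also \emph{relatively maximal}: for every $\psi \notin \Delta$ one has $\Delta, \psi \vdash_\mbc \varphi$. From these facts one derives the usual membership conditions: $\psi \in \Delta$ iff $\Delta \vdash_\mbc \psi$; $\psi \land \xi \in \Delta$ iff $\psi \in \Delta$ and $\xi \in \Delta$; $\psi \lor \xi \in \Delta$ iff $\psi \in \Delta$ or $\xi \in \Delta$ (here one uses {\bf (A6)}--{\bf (A8)} and saturation); and $\psi \to \xi \in \Delta$ iff $\psi \notin \Delta$ or $\xi \in \Delta$ (using {\bf (A1)}, {\bf (A2)}, {\bf (MP)} and saturation). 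Now define $\rho_\Delta : {\cal L}_\Sigma \to \{0,1\}$ by $\rho_\Delta(\psi) = 1$ iff $\psi \in \Delta$. The conditions just listed give clauses {\bf (\vale)}, {\bf (\valou)}, {\bf (\valimp)}. For {\bf (\valnot)}: if $\rho_\Delta(\neg\psi)=0$, i.e.\ $\neg\psi \notin \Delta$, then since $\psi \lor \neg\psi \in \Delta$ by {\bf (A10)} and $\Delta$ respects $\lor$, we get $\psi \in \Delta$, i.e.\ $\rho_\Delta(\psi)=1$. For {\bf (\valbola)}: if $\rho_\Delta(\circ\psi)=1$ but $\rho_\Delta(\psi)=1$ and $\rho_\Delta(\neg\psi)=1$, then $\circ\psi, \psi, \neg\psi \in \Delta$, so by {\bf (A11)} and two applications of {\bf (MP)} we get $\varphi \in \Delta$, contradicting $\Delta \nvdash_\mbc \varphi$. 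Hence $\rho_\Delta$ is a bivaluation for \mbc\ with $\rho_\Delta[\Gamma] \subseteq \{1\}$ and $\rho_\Delta(\varphi)=0$, so $\Gamma \not\models_\mbc^2 \varphi$, establishing the contrapositive.

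\medskip

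\noindent\textbf{Main obstacle.} The delicate part is the construction and analysis of the $\varphi$-saturated set $\Delta$ --- in particular verifying that relative maximality yields the disjunction property ($\psi \lor \xi \in \Delta$ forces $\psi \in \Delta$ or $\xi \in \Delta$) and the implication clause. These rely essentially on the deduction metatheorem, which in turn requires the positive fragment {\bf (A1)}--{\bf (A2)} plus {\bf (MP)}; one must be careful that adding the paraconsistent axioms {\bf (A10)}--{\bf (A11)} does not disturb this. Everything else (the soundness induction, the Boolean clauses) is routine. Note also that since \mbc\ is finitary and $\to$ satisfies the deduction metatheorem, it suffices to treat the case $\Gamma = \emptyset$ or to invoke compactness; either way the saturation step is where all the real work lies.
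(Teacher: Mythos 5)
The paper does not prove this theorem; it imports it verbatim from the cited reference (Theorems~56 and~61 of the Handbook chapter on \lfis), and your soundness-plus-Lindenbaum argument is essentially the standard proof given there, so the approach is correct and matches. One small correction to your attributions: the right-to-left direction of the clause $\psi \to \xi \in \Delta$ iff $\psi \notin \Delta$ or $\xi \in \Delta$ (in the case $\psi \notin \Delta$) cannot be obtained from {\bf (A1)}, {\bf (A2)}, {\bf (MP)} and saturation alone --- it needs {\bf (A9)}, via $\Delta \vdash_\mbc \psi \vee (\psi \to \xi)$ together with the disjunction property of $\Delta$; this is precisely the axiom that makes the positive fragment classical rather than intuitionistic, and without it the implication clause fails.
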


\begin{theorem} {\em (\cite[Theorem~6.4.9]{CC16})}  \label{val-bival-mbC}
Any  bivaluation $\rho$ for \mbc\ induces a valuation $v^\rho$ over the Nmatrix $\matM_5$ given by $v^\rho(\alpha) \defin (\rho(\alpha),\rho(\neg\alpha),\rho(\cons\alpha))$ such that, for every formula $\alpha$: $\rho(\alpha)=1$ iff $v^\rho(\alpha) \in {\rm D}$.
\end{theorem}

\noindent From the previous result, Theorem~\ref{comp-mbc-full} follows easily (see~\cite[Corollary~6.4.10]{CC16}). 
The characteristic Nmatrix $\matM_5$ of  \mbc\ can be considered as the `classical' model of it, since it is based on the `classical'  Boolean algebra $\mA_2$. In Section~\ref{compM5} it will be shown that \qmbc, the first-order version of \mbc, can be characterized by first-order structures defined over $\matM_5$. These structures can be considered as `classical'  in this sense.

\section{The logic \qmbc}
In this section the first-order logic \qmbc, introduced in~\cite{car:etal:14} (see also~\cite{CC16}) as an extension  of \mbc\  to first-order languages, will be briefly recalled. In Section~\ref{swapfol} a new semantics of first-order swap structures for \qmbc\ will be defined.
 
\begin{definition} \label{fosig}
Assume the propositional signature $\Sigma = \{\wedge,\vee,\to,\neg, \circ\}$  for \mbc, as well as the symbols $\forall$ (universal quantifier) and $\exists$ (existential quantifier), with the punctuation marks (commas and parentesis). Let $Var=\{v_1,v_2,\ldots\}$ be a denumerable set of individual variables. A first-order signature $\Theta$ for \qmbc\ is composed by the following elements: 
\begin{itemize}
  \item[-] a set $\mathcal{C}$ of individual constants; 
  \item[-] for each $n\geq 1$, a set $\mathcal{F}_n$ of function symbols of arity $n$,
  \item[-] for each $n\geq 1$, a set $\mathcal{P}_n$ of predicate symbols of arity $n$.\footnote{It will be assumed, as usual, that $\Theta$ has at least one predicate symbol, in order to have a non-empty set of formulas. For instance, it could be assumed from the beginning an equality predicate (see Section~\ref{secEq}).}
\end{itemize}
\end{definition}

\begin{Not} {\em Let $\Theta$ be a first-order signature for \qmbc. The sets of terms and formulas generated by $\Theta$ from $Var$ we will denoted by \tert\ and \fort, respectively. The set of sentences (formulas without free variables) and the set of closed terms (terms without variables) over $\Theta$ are denoted by \sent\ and \ctert, respectively. Given a formula $\varphi$, the formula obtained from $\varphi$ by substituting every free occurrence of a variable $x$ by a term $t$ will be denoted by $\varphi[x/t]$.}
\end{Not}

\noindent
The notions of {\em subformula}, {\em scope} of an occurrence of a quantifier in a formula, {\em free} and {\em bound} occurrences of a variable in a formula, and of  {\em term free for a variable in a formula}, are the usual ones (see, for instance, \cite{mendelson}).

\begin{definition} (\cite[Definition 7.1.4]{CC16}) Let $\varphi$ and $\psi$ be formulas. If $\varphi$ can be obtained from $\psi$ by means of addition or deletion of void quantifiers, or by renaming bound variables (keeping the same free variables in the same places), we say that $\varphi$ and $\psi$ are {\em variant} of each other.
\end{definition}

\begin{definition} (\cite[Definition 7.1.5]{CC16}) \label{defqmbc}
Let $\Theta$ be a first-order signature. The logic \qmbc\ is obtained from the Hilbert calculus \mbc\ extended by the following axioms and rules:\\[2mm]
{\bf Axiom schemas:}\\

$\begin{array}{ll}
{\bf (Ax12)} & \varphi[x/t]\to\exists x\varphi, \ \mbox{ if $t$ is a term free for $x$ in $\varphi$}\\[3mm]
{\bf (Ax13)} & \forall x\varphi\to\varphi[x/t], \ \mbox{ if $t$ is a term free for $x$ in $\varphi$}\\[3mm]
{\bf (Ax14)} & \alpha\to\beta, \ \mbox{ whenever $\alpha$ is a variant of $\beta$}
\end{array}$\\[4mm]

{\bf Inference rules:}\\

$\begin{array}{ll}
{\bf (\exists\mbox{\bf -In})} & \dfrac{\varphi\to\psi}{\exists x\varphi\to\psi}, \ \mbox{  where $x$ does not occur free in $\psi$}\\[4mm]
{\bf (\forall\mbox{\bf -In})} & \dfrac{\varphi\to\psi}{\varphi\to\forall x\psi}, \ \mbox{ where $x$ does not occur free in $\varphi$}
\end{array}$
\end{definition}

\begin{definition} If $\Gamma \cup \{\varphi \} \subseteq For(\Theta)$, then $\Gamma \vdash_{\qmbc} \varphi $ will denote that there exists a derivation in \qmbc\ of $\varphi$ from $\Gamma$.
\end{definition}

\noindent In~\cite{CC16} it was proved that the logic \qmbc\ enjoys the Deduction meta-theorem (DMT), as usually presented in first-order logics:

\begin{theorem}[Deduction Meta-Theorem (DMT) for \qmbc]\label{teoded:teo}
Suppose that there exists in \qmbc\ a derivation of $\psi$ from $\Gamma \cup\{\varphi\}$, such that no application of the rules {\em($\exists${\bf -In})} and {\em($\forall${\bf -In})} have, as their quantified variables, free variables of $\varphi$ (in particular, this holds when $\varphi$ is a sentence). Then $\Gamma \vdash_{\qmbc} \varphi \to \psi$.
\end{theorem}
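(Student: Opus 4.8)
The plan is to prove the Deduction Meta-Theorem by induction on the length of a derivation of $\psi$ from $\Gamma \cup \{\varphi\}$ satisfying the stated restriction on the quantifier rules, following the classical Herbrand--Tarski strategy adapted to the Hilbert calculus of \qmbc. First I would set up the induction: given a derivation $\psi_1, \ldots, \psi_n = \psi$ in which every application of $(\exists\textbf{-In})$ and $(\forall\textbf{-In})$ uses a quantified variable that is not free in $\varphi$, I would show by induction on $k$ that $\Gamma \vdash_{\qmbc} \varphi \to \psi_k$ for every $k \le n$. The base cases and the easy inductive cases are routine: if $\psi_k$ is an axiom or a member of $\Gamma$, then $\psi_k$ together with \textbf{(A1)} and $(\textbf{MP})$ gives $\varphi \to \psi_k$; if $\psi_k = \varphi$, then $\varphi \to \varphi$ is derivable in positive classical logic and hence in \qmbc; and if $\psi_k$ is obtained by $(\textbf{MP})$ from earlier $\psi_i$ and $\psi_j = \psi_i \to \psi_k$, one combines the inductive hypotheses $\varphi \to \psi_i$ and $\varphi \to (\psi_i \to \psi_k)$ using \textbf{(A2)} and $(\textbf{MP})$ twice.

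The interesting cases are the two quantifier rules. Suppose $\psi_k = \exists x\, \delta \to \theta$ is obtained by $(\exists\textbf{-In})$ from $\psi_i = \delta \to \theta$, where $x$ does not occur free in $\theta$; by the restriction on the derivation, $x$ does not occur free in $\varphi$ either. By inductive hypothesis $\Gamma \vdash_{\qmbc} \varphi \to (\delta \to \theta)$, hence (by a positive-classical-logic manipulation, importing/exporting hypotheses) $\Gamma \vdash_{\qmbc} \delta \to (\varphi \to \theta)$. Since $x$ occurs free neither in $\varphi$ nor in $\theta$, it does not occur free in $\varphi \to \theta$, so $(\exists\textbf{-In})$ applies and yields $\Gamma \vdash_{\qmbc} \exists x\, \delta \to (\varphi \to \theta)$; one more positive-logic step gives $\Gamma \vdash_{\qmbc} \varphi \to (\exists x\, \delta \to \theta)$, as required. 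The case of $(\forall\textbf{-In})$ is analogous: from $\varphi \to (\delta \to \theta)$ with $x$ not free in $\varphi$ nor in $\delta$, rearrange to $\delta \to (\varphi \to \theta)$ — wait, more directly, from $\varphi \to (\delta \to \theta)$ get $(\varphi \wedge \delta) \to \theta$, hmm; it is cleaner to note $\Gamma \vdash_{\qmbc} (\varphi \wedge \delta) \to \theta$ is not quite the shape needed, so instead pass to $\delta' \to \theta'$ of the right form and apply $(\forall\textbf{-In})$ with $x$ not free in the antecedent, then restore the implication. In all these steps I only use that positive classical logic is available in \qmbc\ together with the two quantifier rules, and crucially the hypothesis that $x$ is not free in $\varphi$.

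The main obstacle — and the only place real care is needed — is the bookkeeping around free variables in the quantifier-rule cases: one must verify that after the positive-logic rearrangement of the implication (moving $\varphi$ across the $\delta/\theta$ boundary) the variable $x$ is still not free in whichever subformula plays the role of the antecedent (for $\forall$) or consequent (for $\exists$) in the reapplication of the rule, and this is exactly guaranteed by the hypothesis that $x$ is not free in $\varphi$. A secondary technical point is that the needed "import/export" lemmas for positive classical logic (e.g. from $\vdash \alpha \to (\beta \to \gamma)$ infer $\vdash \beta \to (\alpha \to \gamma)$, and $\vdash \alpha \to \alpha$) must be available; these are standard consequences of \textbf{(A1)}, \textbf{(A2)} and $(\textbf{MP})$ and can be cited or quickly derived. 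Everything else is the standard Deduction Theorem argument, and the restriction in the hypothesis is precisely what makes the quantifier steps go through; the parenthetical remark that this holds whenever $\varphi$ is a sentence is immediate since a sentence has no free variables at all.
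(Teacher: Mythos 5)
Your proof is correct and is the standard induction-on-derivation-length argument; the paper itself does not prove Theorem~\ref{teoded:teo} but only cites~\cite{CC16}, where essentially this same argument is given. The only place you wobble is the $(\forall\mbox{\bf -In})$ case, but the route you settle on is right: import to $(\varphi\wedge\delta)\to\theta$, note $x$ is free in neither conjunct of the antecedent, apply the rule to get $(\varphi\wedge\delta)\to\forall x\,\theta$, and export back to $\varphi\to(\delta\to\forall x\,\theta)$.
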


\section{First-Order Swap Structures} \label{swapfol}

The traditional approach to first-order structures based on algebraic structures (see for instance~\cite{mostowski,henkin,rasiowa-sikorski}) will be adapted to swap structures semantics. 
Thus,  from now on the  Boolean algebras to be considered are assumed to be  complete.\footnote{Instead of this we could consider arbitrary Boolean algebras, thus obtaining partial models in which  some quantified formulas has no denotation because of the lack of infima and/or suprema of some subsets. Since every Boolean algebra can be completed (see Section~\ref{complete}), we  decide to restrict the semantic to complete Boolean algebras. An interesting discussion concerning this topic can be found in~\cite[footnote 3]{Ci:Nog:15}.} 

\begin{definition} \label{stru} Let ${\cal M}({\cal B})=(\mathcal{B},D)$ be a non-deterministic matrix defined by a swap structure $\mathcal{B}$ for {\bf mbC} over a complete Boolean algebra \mA, and let $\Theta$ be a first-order signature (see Definition~\ref{fosig}). A  (first-order) {\em structure} over ${\cal M}({\cal B})$ and $\Theta$ is pair $\mathfrak{A} = \langle U, I_{\mathfrak{A}} \rangle$ such that $U$ is a nonempty set (the domain of the structure) and $I_{\mathfrak{A}}$ is an interpretation function which assigns:
\begin{itemize}
\item[-] to each individual constant $c \in \mathcal{C}$, an element $I_\mathfrak{A}(c)$ of $U$; 
\item[-] to each function symbol $f$ of arity $n$, a function $I_\mathfrak{A}(f): U^n \to
U$; 
\item[-] to each predicate symbol $P$ of arity $n$, a function  $I_\mathfrak{A}(P): U^n \to |{\cal B}|$.
\end{itemize}
\end{definition}

\noindent
From now on, the expressions $c^\mathfrak{A}$, $f^\mathfrak{A}$ and $P^\mathfrak{A}$ will be used instead of $I_\mathfrak{A}(c)$, $I_\mathfrak{A}(f)$ and $I_\mathfrak{A}(P)$, for an individual constant symbol c, a function symbol $f$ and a predicate symbol $P$, respectively.

\begin{definition} \label{assign} Let $\mathfrak{A}$ be a structure over ${\cal M}({\cal B})$ and $\Theta$. A function $\mu: Var \to U$ is called an {\em assignment} over  $\mathfrak{A}$. 
\end{definition}

\begin{definition}~\label{term} 
Let $\mathfrak{A}$ be a structure over ${\cal M}({\cal B})$ and $\Theta$, and let $\mu: Var \to U$ be an assignment. For each term $t$, we define $\termvalue{t}^\mathfrak{A}_\mu$ in $U$ such that:
\begin{itemize}
\item[-] $\termvalue{c}^\mathfrak{A}_\mu = c^\mathfrak{A}$ if $c$ is an individual constant;
\item[-] $\termvalue{x}^\mathfrak{A}_\mu = \mu(x)$ if $x$ is a variable;
\item[-] $\termvalue{f(t_1,\ldots,t_n)}^\mathfrak{A}_\mu = f^\mathfrak{A}(\termvalue{t_1}^\mathfrak{A}_\mu,\ldots,\termvalue{t_n}^\mathfrak{A}_\mu)$ if $f$ is a function symbol of arity $n$ and $t_1,\ldots,t_n$ are terms.
\end{itemize}
\end{definition}

\begin{definition} \label{diaglan} 
Consider a structure $\mathfrak{A}$ over ${\cal M}({\cal B})$ and $\Theta$. The {\em diagram language} of $\mathfrak{A}$ is the set of formulas $For(\Theta_U)$, where  $\Theta_U$ is the signature obtained from $\Theta$ by adding a  new individual constant $\bar{a}$ for each element $a$ of the domain $U$ of $\mathfrak{A}$. 
\end{definition}

\begin{definition} \label{extA}
The structure $\widehat{\mathfrak{A}} = \langle U, I_{\widehat{\mathfrak{A}}} \rangle$ over $\Theta_U$  is the structure $\mathfrak{A}$ over $\Theta$ extended by $I_{\widehat{\mathfrak{A}}}(\bar{a})=a$ for every $a \in U$. 
\end{definition}

\noindent Observe that $s^{\widehat{\mathfrak{A}}} = s^\mathfrak{A}$ if $s$ is a symbol (individual constant, function symbol or predicate symbol) of $\Theta$.

\begin{Not} \label{SentA}
{\em For any formula $\varphi$,  $FV(\varphi)$ will denote the set of free variables of $\varphi$.
The set of (closed) sentences (formulas without free variables) of the diagram language of $\mathfrak{A}$ will be denoted by \sena, and the set of terms and of closed terms over $\Theta_U$ will be denoted by $Ter(\Theta_U)$ and $CTer(\Theta_U)$, respectively. }
\end{Not}

\begin{remark} \label{closedterm} Clearly, if $t$ is a closed term then the value of $\termvalue{t}^\mathfrak{A}_\mu$ does not depend on the  assignment $\mu$, that is:  $\termvalue{t}^\mathfrak{A}_\mu = \termvalue{t}^\mathfrak{A}_{\mu'}$, for every assignments $\mu$ and $\mu'$. Thus, if  $t$ is a closed term we can write $\termvalue{t}^\mathfrak{A}$ instead of $\termvalue{t}^\mathfrak{A}_\mu$, for any   assignment $\mu$.
\end{remark}

\begin{definition}~\label{mu}
Let $\widehat{\mathfrak{A}} = \langle U, I_{\widehat{\mathfrak{A}}} \rangle$ be as above. Any assignment $\mu$ over $\widehat{\mathfrak{A}}$ induces 
a function $\widehat{\mu}: (Ter(\Theta_U) \cup For(\Theta_U)) \to (Ter(\Theta_U) \cup For(\Theta_U))$ given by $\widehat{\mu}(s) = s[x_1/\overline{\mu(x_1)}, \ldots, x_n/\overline{\mu(x_n)}]$, if either $s \in Ter(\Theta_U)$ such that  $Var(s)\subseteq \{x_1, \ldots, x_n \}$ or $s \in For(\Theta_U)$ such that $FV(s)\subseteq \{x_1, \ldots, x_n \}$.
\end{definition}

\noindent
It is worth observing that $\widehat{\mu}(\varphi) \in Sen(\Theta_U)$ if $\varphi \in For(\Theta_U)$, and $\widehat{\mu}(t) \in CTer(\Theta_U)$ if $t \in Ter(\Theta_U)$.

The next step is to define the notion of interpretation (or denotation) of a formula $\varphi \in For(\Theta_U)$ in a given (extended) structure $\widehat{\mathfrak{A}}$ and assignment $\mu$, which could be denoted by $\termvalue{\varphi}^{\mathfrak{A}}_\mu$ (being coherent with the previous notation).
Is exactly at this point when non-determinism enters. Observe that, in the traditional (truth-functional or algebraic)  first-order semantical approach, any structure and assignment induce together a (unique) denotation for any formula. In the present framework, this is also true  for atomic formulas, since predicates are interpreted by means of functions, and taking into account that the denotation of any term is uniquely determined given a structure and an assignment. However, the denotation of complex formulas is possibly non-deterministic (i.e., ambiguous), given that it involves logical symbols (connectives and quantifiers) to be evaluated over a non-deterministic matrix. As happens with the propositional case, we are not interested in assigning sets of truth-values to single formulas: instead of this, valuations ({\em legal valuations}, in Avron and Lev's terminology) are used in order to {\em choose}, in a coherent way, a single truth-value for any formula.\footnote{As we shall see in Section~\ref{Qtwist},  in the case of the first-order logic $\lfium_\cons$, which is an axiomatic extension of \qmbc\ based on a truth-functional 3-valued logic, any structure and assignment will determine a unique truth-value for any single formula. In this case, it will not necessary to consider valuations.}  The definition of (legal) valuations over first-order swap structures involves an additional technical complication with respect to the propositional case: the validity of the Substitution Lemma -- a crucial result which allows to substitute a universally quantified variable by any term free for such variable in a given formula -- is far from being true in our non-deterministic environment.  Indeed, this technical result is trivially true for first-order logics in which the semantics is  obtained by algebraic manipulations over the interpretation of the subformulas of the formula being interpreted. Since in \qmbc\ it is necessary to introduce  the valuations as intermediaries between the formulas and the multioperators of the swap structures, such valuations must satisfy additional requirements in order to guarantee the validity of the Substitution Lemma (namely, clause~(vi) in Definition~\ref{val} below).
To summarize,  in order to interpret formulas in the present non-deterministic framework, it is necesary a structure, an assignment, and a (first-order) valuation over the underlying swap structure, which will be called a {\em \qmbc-valuation}.

Given an assignment $\mu$ over a structure $\mathfrak{A}$, a variable $x$ and $a \in U$, the assignment $\mu^{x}_{a}$ over $\mathfrak{A}$ is given by $\mu^{x}_{a}(y) = a$, if $y = x$, and   $\mu^{x}_{a}(y) =  \mu(y)$ otherwise. Thus, the previous considerations lead us to the following notion:

\begin{definition} (\qmbc-valuations)~\label{val}
Let ${\cal M}({\cal B}) =({\cal B}, D)$ be a non-de\-ter\-min\-is\-tic matrix defined by a swap structure ${\cal B}$ for \mbc, and let $\mathfrak{A} $ be a structure over $\Theta$ and ${\cal M}({\cal B})$. A function $v:$ \sena $\to |{\cal B}|$ is a {\em valuation} for \qmbc\ (or a \qmbc-{\em valuation}) over $\mathfrak{A} $ and  ${\cal M}({\cal B})$,  if it satisfies the following clauses:\\
($i$) $v(P(t_1,\ldots,t_n)) = P^{\mathfrak{A}}(\termvalue{t_1}^{\widehat{\mathfrak{A}}},\ldots,\termvalue{t_n}^{\widehat{\mathfrak{A}}}) $, if $P(t_1,\ldots,t_n)$ is atomic;\footnote{For the notation used here, recall Remark~\ref{closedterm}.} \\
($ii$) $v(\#\varphi) \in \tilde{\#} v(\varphi)$, for every $\#\in \{\neg, \circ\}$;\\
($iii$)  $v(\varphi \# \psi) \in v(\varphi) \tilde{\#} v(\psi)$, for every $\#\in \{\wedge,\vee, \to\}$;\\
($iv$) $v(\forall x\varphi) \in \{ z \in |{\cal B}| \ : \ z_1 = \bigwedge\{\pi_1(v(\varphi[x/\bar{a}])) \ : \ a \in U\}\}$;\\
($v$) $v(\exists x\varphi) \in \{ z \in |{\cal B}| \ : \ z_1 = \bigvee\{\pi_1(v(\varphi[x/\bar{a}])) \ : \ a \in U\}\}$;\\
($vi$)] Let $t$ be free for $z$ in $\varphi$ and $\psi$, $\mu$ an assignment and  $b = \termvalue{t}^{\widehat{\mathfrak{A}}}_{\mu}$. Then:\\
($vi.1$) If $v(\widehat{\mu}(\varphi[z/t])) = v(\widehat{\mu}(\varphi[z/\bar{b}]))$, then  $v(\widehat{\mu}(\#\varphi[z/t])) = v(\widehat{\mu}(\#\varphi[z/\bar{b}]))$, for every $\#\in \{\neg, \circ\}$;\\
($vi.2$) If $v(\widehat{\mu}(\varphi[z/t])) = v(\widehat{\mu}(\varphi[z/\bar{b}]))$ and $v(\widehat{\mu}(\psi[z/t])) = v(\widehat{\mu}(\psi[z/\bar{b}]))$, then \\$v(\widehat{\mu}(\varphi\#\psi[z/t]))$ $= v(\widehat{\mu}(\varphi\#\psi[z/\bar{b}]))$, for every $\#\in \{\wedge,\vee, \to\}$;\\
($vi.3$) Let $x$ be such that $x \neq z$ and $x$ does not occur in $t$. If $v(\widehat{\mu^{x}_{a}}(\varphi[z/t])) = v(\widehat{\mu^{x}_{a}}(\varphi[z/\bar{b}]))$, for every $a \in U$, then  $v(\widehat{\mu}((Qx\varphi)[z/t])) = v(\widehat{\mu}((Qx\varphi)[z/\bar{b}]))$, for every $Q\in \{\forall, \exists\}$;\\
($vii$) If $\varphi$ and $\varphi'$ are variant, then $v(\varphi) = v(\varphi')$.
\end{definition}

Observe that clause (i) in the previous definition is the only one that uses the information of the structure $\mathfrak{A}$, and it allows to interpret the atomic formulas. In order to obtain a single denotation for a complex formula, the valuation is used to choose (coherently) a denotation for the formula from the denotation of its components. Clause~(vi) guarantees the validity of the Substitution Lemma, a crucial step for obtaining the soundness of the proposed semantics.

\begin{definition} \label{vmu} Let  $v$ be a \qmbc-valuation over $\mathfrak{A}$ and ${\cal M}({\cal B})$. Given an assignment $\mu$ over $\mathfrak{A} $, we define $v_{\mu}:$ \fora $\to |{\cal B}|$ as $v_{\mu}(\varphi) = v(\widehat{\mu}(\varphi))$.
\end{definition}

\begin{definition} \label{semcons} Let  $\mathfrak{A} $ be a structure over $\Theta$ and ${\cal M}({\cal B})$. If $\Gamma \cup \{\varphi\} \subseteq$ \fora, $\varphi$ is said to be  a {\em semantical consequence of $\Gamma$ over $(\mathfrak{A}, {\cal M}({\cal B}))$}, denoted by $\Gamma\models_{(\mathfrak{A}, {\cal M}({\cal B}))}\varphi$, if the following holds: for every \qmbc-valuation $v$ over $(\mathfrak{A}, {\cal M}({\cal B}))$, if $v_{\mu}(\gamma)\in D$,  for every formula $\gamma \in \Gamma$ and every assignment $\mu$, then $v_{\mu}(\varphi)\in D$,  for every assignment $\mu$.
\end{definition}

\begin{definition} (Semantical consequence relation in \qmbc\ w.r.t. swap structures) \label{consrel} If $\Gamma \cup \{\varphi\} \subseteq For(\Theta)$,  $\varphi$ is said to be  a {\em semantical consequence of $\Gamma$ in \qmbc\ w.r.t. first-order swap structures}, denoted by $\Gamma\models_{\qmbc}\varphi$, if $\Gamma\models_{(\mathfrak{A}, {\cal M}({\cal B}))}\varphi$ for every $(\mathfrak{A}, {\cal M}({\cal B}))$.
\end{definition}

\noindent
As mentioned in the Introduction, the semantical contexts $(\mathfrak{A}, {\cal M}({\cal B}))$ for \qmbc\  generalize the semantical contexts $(\mathfrak{A},\mathcal{M}_\A)$ for first-order classical logic \fol, where $\mathcal{M}_\A=\langle \A,\{1\}\rangle$. The latter, by its turn, generalize the class of standard Tarskian structures for \fol\ with the usual semantics, by taking the two-element Boolean algebra  $\mA_2$.

\section{Soundness of \qmbc\ w.r.t. swap structures}

In this section the  soundness of \qmbc\ w.r.t. first-order swap structures semantics for \qmbc\ will be proved. As mentioned in the previous section, a key result for  proving soundness is the Substitution Lemma, which can be proved easily by induction on the complexity of $\varphi$.

\begin{theorem} {\em (Substitution Lemma)} \label{substlem}
Let  $v$ be a \qmbc-valuation over $\mathfrak{A}$ and ${\cal M}({\cal B})$ and let $\mu$ be an assignment. If $t$ is a term free for $z$ in $\varphi$ and $b = \termvalue{t}^{\widehat{\mathfrak{A}}}_{\mu}$, then $v_{\mu}(\varphi[z/t]) = v_{\mu}(\varphi[z/\bar{b}])$.
\end{theorem}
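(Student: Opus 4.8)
The plan is to proceed by induction on the complexity of the formula $\varphi$, where the relevant notion of complexity counts connectives and quantifiers but, crucially, must be insensitive to the particular term substituted — so I would measure complexity in a way that makes $\varphi[z/t]$ and $\varphi[z/\bar b]$ have the same complexity (e.g. the number of occurrences of logical symbols $\neg,\circ,\wedge,\vee,\to,\forall,\exists$ in $\varphi$). The statement to be proved is $v_\mu(\varphi[z/t]) = v_\mu(\varphi[z/\bar b])$, i.e. $v(\widehat\mu(\varphi[z/t])) = v(\widehat\mu(\varphi[z/\bar b]))$, where $b = \termvalue{t}^{\widehat{\mathfrak A}}_\mu$.

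For the base case, $\varphi$ is atomic, say $\varphi = P(t_1,\ldots,t_n)$; then $\varphi[z/t] = P(t_1[z/t],\ldots,t_n[z/t])$ and similarly for $\bar b$. Applying clause~($i$) of Definition~\ref{val} to both sides, the equality reduces to showing $\termvalue{\widehat\mu(t_i[z/t])}^{\widehat{\mathfrak A}} = \termvalue{\widehat\mu(t_i[z/\bar b])}^{\widehat{\mathfrak A}}$ for each $i$. This is a routine sub-induction on the structure of the term $t_i$, using Definition~\ref{term} and the key fact that $\termvalue{\widehat\mu(t)}^{\widehat{\mathfrak A}} = \termvalue{t}^{\widehat{\mathfrak A}}_\mu = b = \termvalue{\bar b}^{\widehat{\mathfrak A}}$ (the last equality by Definition~\ref{extA}); when $z$ does not occur in $t_i$ both sides equal $\termvalue{\widehat\mu(t_i)}^{\widehat{\mathfrak A}}$, and at an occurrence of $z$ one replaces $t$ (resp. $\bar b$) whose denotations agree. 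Here one also uses that $t$ is free for $z$ in $\varphi$, so no variable of $t$ gets captured and the substitution commutes with $\widehat\mu$ appropriately.

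The inductive step is where clause~($vi$) does all the work, and it is essentially a bookkeeping exercise rather than a genuine obstacle. If $\varphi = \#\psi$ with $\#\in\{\neg,\circ\}$, the induction hypothesis gives $v(\widehat\mu(\psi[z/t])) = v(\widehat\mu(\psi[z/\bar b]))$, and clause~($vi.1$) immediately yields $v(\widehat\mu(\#\psi[z/t])) = v(\widehat\mu(\#\psi[z/\bar b]))$. If $\varphi = \psi_1\#\psi_2$ with $\#\in\{\wedge,\vee,\to\}$, apply the induction hypothesis to each $\psi_j$ (note $t$ is free for $z$ in each $\psi_j$ since it is free for $z$ in $\varphi$) and invoke clause~($vi.2$). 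If $\varphi = Qx\psi$ with $Q\in\{\forall,\exists\}$: if $x=z$, then $\varphi$ has no free occurrence of $z$ and both substitutions are trivial (or give variants, handled by clause~($vii$)); otherwise, since $t$ is free for $z$ in $\varphi$, $x$ does not occur in $t$, so for each $a\in U$ the induction hypothesis applied with the assignment $\mu^x_a$ gives $v(\widehat{\mu^x_a}(\psi[z/t])) = v(\widehat{\mu^x_a}(\psi[z/\bar b]))$ — here one must check $\termvalue{t}^{\widehat{\mathfrak A}}_{\mu^x_a} = \termvalue{t}^{\widehat{\mathfrak A}}_\mu = b$, which holds precisely because $x$ does not occur in $t$ — and then clause~($vi.3$) delivers $v(\widehat\mu((Qx\psi)[z/t])) = v(\widehat\mu((Qx\psi)[z/\bar b]))$.

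The only point requiring mild care — and the closest thing to a real obstacle — is the interaction between the syntactic operation $\widehat\mu(\cdot)$ (simultaneous replacement of free variables by constants $\overline{\mu(\cdot)}$) and the substitution $[z/t]$: one must verify that these compose correctly, i.e. that $\widehat\mu(\varphi[z/t])$ is, up to the conventions fixed in Definition~\ref{mu}, the same formula one obtains by the substitutions in the order needed to match the recursion, and that the freeness hypotheses propagate to subformulas under $\mu^x_a$. These are the standard syntactic commutation facts about substitution, and invoking the freeness of $t$ for $z$ throughout keeps everything well-defined; with that in place the induction closes. I would present the term base case and the quantifier step in full and relegate the connective steps to a remark that they are immediate from clauses~($vi.1$)--($vi.2$).
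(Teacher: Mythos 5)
Your proof is correct and is exactly the argument the paper has in mind: the paper omits the proof, stating only that it ``can be proved easily by induction on the complexity of $\varphi$,'' and clauses~($vi.1$)--($vi.3$) of Definition~\ref{val} were introduced precisely so that this induction closes as you describe. Your handling of the base case (denotation of closed terms, $\termvalue{t}^{\widehat{\mathfrak A}}_{\mu}=b=\termvalue{\bar b}^{\widehat{\mathfrak A}}$) and of the quantifier step (passing to $\mu^{x}_{a}$ and checking that $x$ not occurring in $t$ keeps the parameter $b$ fixed) supplies the details the paper leaves implicit.
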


\noindent A useful property of the semantics of the universal quantifier can be obtained now. The easy proof is ommited.

\begin{proposition}~\label{propquant}  
Let  $v$ be a \qmbc-valuation over $\mathfrak{A}$ and ${\cal M}({\cal B})$, and let $\varphi$ be a formula such that $FV(\varphi) \subseteq \{x_1, \ldots, x_n\}$. Then, $v(\forall x_1 \ldots\forall x_n \varphi) \in D$ iff $v_\mu(\varphi) \in D$, for every $\mu$.
\end{proposition}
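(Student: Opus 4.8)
The plan is to prove the biconditional by reducing everything to the case of sentences via the substitution apparatus already developed, and then handling the quantifier block by an easy induction on $n$.

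\medskip

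First I would observe that, by Definition~\ref{vmu}, $v_\mu(\varphi) = v(\widehat{\mu}(\varphi))$, and since $FV(\varphi) \subseteq \{x_1,\ldots,x_n\}$, the formula $\widehat{\mu}(\varphi)$ is the sentence $\varphi[x_1/\overline{\mu(x_1)},\ldots,x_n/\overline{\mu(x_n)}]$. Likewise, $\forall x_1\ldots\forall x_n\varphi$ is already a sentence (or becomes one after applying $\widehat{\mu}$, which does nothing to it), so $v(\forall x_1\ldots\forall x_n\varphi) = v_\mu(\forall x_1\ldots\forall x_n\varphi)$ for every $\mu$. Hence it suffices to show: for a formula $\psi$ with $FV(\psi)\subseteq\{x\}$, one has $\pi_1\big(v(\forall x\,\psi[\text{rest of free vars instantiated}])\big)$ behaves as expected; more precisely, I would prove the single-quantifier statement
\[
v(\forall x\,\chi) \in D \quad\Longleftrightarrow\quad v(\chi[x/\bar a]) \in D \ \text{ for every } a\in U,
\]
for any formula $\chi$ with $FV(\chi)\subseteq\{x\}$, and then iterate. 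The single-quantifier claim follows directly from clause~($iv$) of Definition~\ref{val}: $v(\forall x\,\chi)\in D$ means $\pi_1(v(\forall x\,\chi)) = 1$, which by ($iv$) equals $\bigwedge\{\pi_1(v(\chi[x/\bar a])) : a\in U\}$; in a Boolean algebra this infimum equals $1$ iff $\pi_1(v(\chi[x/\bar a])) = 1$ for every $a\in U$, i.e.\ iff $v(\chi[x/\bar a])\in D$ for every $a$.

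\medskip

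Then I would run the induction on $n$. For $n=0$ the statement is the trivial identity $v(\varphi)\in D$ iff $v_\mu(\varphi)\in D$ for every $\mu$ (here $\varphi$ is a sentence, so $v_\mu(\varphi) = v(\varphi)$). For the inductive step, write $\varphi$ with $FV(\varphi)\subseteq\{x_1,\ldots,x_{n+1}\}$ and set $\psi = \forall x_{n+1}\varphi$, so $FV(\psi)\subseteq\{x_1,\ldots,x_n\}$. By the induction hypothesis applied to $\psi$, $v(\forall x_1\ldots\forall x_n\psi)\in D$ iff $v_\mu(\psi)\in D$ for every $\mu$, i.e.\ iff $v(\widehat{\mu}(\forall x_{n+1}\varphi))\in D$ for every $\mu$. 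Now $\widehat{\mu}(\forall x_{n+1}\varphi) = \forall x_{n+1}(\widehat{\mu}(\varphi))$ (renaming $x_{n+1}$ if necessary, using clause~($vii$) on variants to keep $v$ unchanged), and applying the single-quantifier claim to $\chi = \widehat{\mu}(\varphi)$ gives: this lies in $D$ iff $v(\widehat{\mu}(\varphi)[x_{n+1}/\bar a])\in D$ for every $a\in U$. Finally, $\widehat{\mu}(\varphi)[x_{n+1}/\bar a] = \widehat{\mu^{x_{n+1}}_a}(\varphi)$ (using the Substitution Lemma, Theorem~\ref{substlem}, to see that substituting $\bar a$ commutes with the instantiation of the other variables), so ranging over all $\mu$ and all $a\in U$ is the same as ranging over all assignments $\mu'$. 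This yields $v(\forall x_1\ldots\forall x_{n+1}\varphi)\in D$ iff $v_{\mu'}(\varphi)\in D$ for every $\mu'$, completing the induction.

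\medskip

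The only real subtlety — and the main obstacle — is the bookkeeping around variable capture in the step $\widehat{\mu}(\forall x_{n+1}\varphi) = \forall x_{n+1}(\widehat{\mu}(\varphi))$: the constant $\bar a$ must be substitutable, and one may need to rename $x_{n+1}$ to a fresh variable before applying $\widehat{\mu}$, invoking clause~($vii$) to ensure the valuation assigns the same value to the variant. Once this is handled carefully, everything else is the Boolean identity ``$\bigwedge X = 1$ iff $X = \{1\}$'' together with Theorem~\ref{substlem}, which is why the authors call the proof easy and omit it.
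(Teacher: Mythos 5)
Your proof is correct, and it is precisely the ``easy proof'' the paper omits: clause~($iv$) of Definition~\ref{val} together with the Boolean fact that $\bigwedge X = 1$ iff $X \subseteq \{1\}$, iterated over the quantifier block by induction on $n$ after reducing to sentences via $\widehat{\mu}$. The only blemish is notational: $\widehat{\mu}(\forall x_{n+1}\varphi)$ is $\forall x_{n+1}\bigl(\varphi[x_1/\overline{\mu(x_1)},\ldots,x_n/\overline{\mu(x_n)}]\bigr)$ rather than $\forall x_{n+1}(\widehat{\mu}(\varphi))$ with $\widehat{\mu}$ instantiating all of $FV(\varphi)$, and since the substituted expressions are closed constants no capture can occur, so the commutation $\varphi[x_1/\overline{\mu(x_1)},\ldots,x_n/\overline{\mu(x_n)}][x_{n+1}/\bar a] = \widehat{\mu^{x_{n+1}}_a}(\varphi)$ is purely syntactic and needs neither clause~($vii$) nor Theorem~\ref{substlem}.
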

 
\noindent 
If $\alpha$ and $\beta$ are formulas in $For(\Theta)$ then $\alpha \leftrightarrow \beta$ will denote the formula $(\alpha \to \beta) \wedge (\beta \to \alpha)$ in $For(\Theta)$.

\begin{proposition}~\label{paxioms} 
\ \\
(i) \ $\alpha, \, \alpha \to \beta \models_\qmbc \beta$; \\
(ii) \ $\alpha \to \beta \models_\qmbc \exists x \alpha \to \beta$, if $x \notin FV(\beta)$;\\ 
(iii) \ $\alpha \to \beta \models_\qmbc \alpha \to \forall x \beta$, if $x \notin FV(\alpha)$;\\
(iv) \ $\models_\qmbc \forall x \alpha \to \alpha[x/t]$, if $t$ is a term free for $x$ in $\alpha$;\\
(v) \ $\models_\qmbc \alpha[x/t] \to \exists x \alpha$, if $t$ is a term free for $x$ in $\alpha$;\\
(vi) \ $\models_\qmbc \alpha \leftrightarrow \alpha' $, if $\alpha$ and $\alpha'$ are variant.
\end{proposition}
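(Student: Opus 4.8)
The plan is to verify each of the six items by unwinding the definition of $\models_\qmbc$, that is, by fixing an arbitrary semantical context $(\mathfrak{A},\mathcal{M}(\mathcal{B}))$, an arbitrary \qmbc-valuation $v$ over it, and an arbitrary assignment $\mu$, and then checking that whenever the left-hand-side formulas are designated at $v_\mu$, so is the right-hand side. Items (i)--(iii) correspond to the rules \MP, $(\exists\text{-In})$ and $(\forall\text{-In})$, while (iv)--(vi) correspond to the axioms {\bf (Ax13)}, {\bf (Ax12)} and {\bf (Ax14)}. I would handle (i) first, as a warm-up: clause $(iii)$ of Definition~\ref{val} forces $v_\mu(\alpha\to\beta)\in v_\mu(\alpha)\,\tilde{\to}\,v_\mu(\beta)$, hence its first coordinate is $\pi_1(v_\mu(\alpha))\to\pi_1(v_\mu(\beta))$ in $\mathcal{A}$; since $\pi_1(v_\mu(\alpha))=1$ and the designated set is $D=\{z : z_1=1\}$, Boolean algebra gives $\pi_1(v_\mu(\beta))=1$.

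\textbf{The quantifier rules.} For (ii), assume $\alpha\to\beta\models_\qmbc$ in the hypothesis sense and suppose $v_\mu(\exists x\alpha)\in D$, i.e.\ $\bigvee\{\pi_1(v(\widehat{\mu}(\alpha)[x/\bar a])) : a\in U\}=1$ by clause $(v)$. The key point is that for each $a\in U$ we may compare $v_{\mu^x_a}(\alpha\to\beta)$ with the pair $v_{\mu^x_a}(\alpha), v_{\mu^x_a}(\beta)$: since $x\notin FV(\beta)$ we have $v_{\mu^x_a}(\beta)=v_\mu(\beta)$ (a variant/substitution bookkeeping step), and by hypothesis $v_{\mu^x_a}(\alpha)\in D$ forces $v_{\mu^x_a}(\beta)\in D$. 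A Boolean manipulation — if $\bigvee_a x_a=1$ and for each $a$, $x_a=1$ implies $y=1$, then $y=1$, using that $x_a=1$ for at least one $a$ — closes the case. Item (iii) is dual, using clause $(iv)$ and the fact that $x\notin FV(\alpha)$ gives $v_{\mu^x_a}(\alpha)=v_\mu(\alpha)$, together with the Boolean identity $\bigwedge_a(p\to q_a)=p\to\bigwedge_a q_a$ available because $\mathcal{A}$ is complete. Here one must be slightly careful: it is the \emph{first coordinates} of the valuations that one controls via clauses $(iv)$--$(v)$, so the argument is really about $\pi_1$ of the relevant snapshots and the fact that $D$ is determined solely by the first coordinate.

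\textbf{The axioms.} For (iv), fix $v,\mu$ and set $b=\termvalue{t}^{\widehat{\mathfrak A}}_\mu$. By clause $(iv)$, $\pi_1(v_\mu(\forall x\alpha))=\bigwedge\{\pi_1(v(\widehat{\mu}(\alpha)[x/\bar a])) : a\in U\}$, which is $\le \pi_1(v(\widehat{\mu}(\alpha)[x/\bar b]))$. By the Substitution Lemma (Theorem~\ref{substlem}), $v_\mu(\alpha[x/t])=v_\mu(\alpha[x/\bar b])$, whose first coordinate is exactly $\pi_1(v(\widehat{\mu}(\alpha)[x/\bar b]))$; so $\pi_1(v_\mu(\forall x\alpha))\le\pi_1(v_\mu(\alpha[x/t]))$, and then clause $(iii)$ applied to the implication gives $\pi_1(v_\mu(\forall x\alpha\to\alpha[x/t]))=\pi_1(v_\mu(\forall x\alpha))\to\pi_1(v_\mu(\alpha[x/t]))=1$. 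Item (v) is entirely symmetric, using $\bigvee$ in clause $(v)$ and the bound $\pi_1(v(\widehat{\mu}(\alpha)[x/\bar b]))\le\bigvee_a(\cdots)$. Item (vi) is immediate from clause $(vii)$: if $\alpha$ and $\alpha'$ are variant then $v(\alpha)=v(\alpha')$, hence $v_\mu(\alpha)=v_\mu(\alpha')$, so both $v_\mu(\alpha\to\alpha')$ and $v_\mu(\alpha'\to\alpha)$ lie over $1\to 1=1$, and then $v_\mu$ of the conjunction is designated by clause $(iii)$ for $\wedge$.

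\textbf{Main obstacle.} The genuinely delicate point is not any single Boolean computation but the bookkeeping with the induced substitution operator $\widehat\mu$ and the interplay between $v$ on \sena\ and $v_\mu$ on \fora, in particular establishing facts like $v_{\mu^x_a}(\beta)=v_\mu(\beta)$ when $x\notin FV(\beta)$ — this requires knowing that $\widehat{\mu^x_a}(\beta)$ and $\widehat\mu(\beta)$ are literally the same sentence (or at worst variants, handled by clause $(vii)$) when $x$ is not free in $\beta$. Once this substitution-calculus lemma is in hand, everything else reduces to the observation that $D$ depends only on the first coordinate together with elementary identities in the complete Boolean algebra $\mathcal{A}$; the reliance on completeness is exactly what makes clauses $(iv)$--$(v)$ well-defined and the distribution identities valid. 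I therefore expect to isolate that substitution lemma explicitly (it is essentially already used to prove Theorem~\ref{substlem}) and then present (i)--(vi) as short verifications.
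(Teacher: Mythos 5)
Your items (i) and (iv)--(vi) are essentially the paper's argument, and your identification of the bookkeeping fact $\widehat{\mu^x_a}(\beta)=\widehat{\mu}(\beta)$ when $x\notin FV(\beta)$ as the hidden prerequisite is exactly right. The problem is item (ii), where your argument as written is broken in two related ways. First, you discharge the wrong proof obligation: to get $v_\mu(\exists x\alpha\to\beta)\in D$ you must show the Boolean \emph{inequality} $\pi_1(v_\mu(\exists x\alpha))\le\pi_1(v_\mu(\beta))$ in $\mA$, not merely that designation of $v_\mu(\exists x\alpha)$ forces designation of $v_\mu(\beta)$; over a non-trivial Boolean algebra the latter is strictly weaker (take $\pi_1(v_\mu(\exists x\alpha))=c$ with $0<c<1$ and $\pi_1(v_\mu(\beta))=0$: the conditional ``antecedent designated $\Rightarrow$ consequent designated'' holds vacuously, yet $c\to 0=\sneg c\neq 1$). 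Second, the Boolean principle you invoke to close the case --- ``$\bigvee_a x_a=1$ implies $x_a=1$ for at least one $a$'' --- is false in every complete Boolean algebra except $\mA_2$ (already in the four-element algebra, $x\vee\sneg x=1$ with neither disjunct equal to $1$). That principle is only available in the `classical' $\matM_5$ setting, not for arbitrary swap structures.

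The repair is the computation you yourself sketch for item (iii), transported to (ii): the hypothesis $v_\mu(\alpha\to\beta)\in D$ for all $\mu$ means precisely $\pi_1(v_\mu(\alpha))\le\pi_1(v_\mu(\beta))$ for all $\mu$; applying this at each $\mu^x_a$ and using $\pi_1(v_{\mu^x_a}(\beta))=\pi_1(v_\mu(\beta))$ gives $\pi_1(v_{\mu^x_a}(\alpha))\le\pi_1(v_\mu(\beta))$ for every $a\in U$, whence $\pi_1(v_\mu(\exists x\alpha))=\bigvee_{a\in U}\pi_1(v_{\mu^x_a}(\alpha))\le\pi_1(v_\mu(\beta))$ by the defining property of the supremum (equivalently, the identity $\bigwedge_a(p_a\to q)=(\bigvee_a p_a)\to q$). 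This is exactly the paper's proof of (ii), and it is the dual of the $\bigwedge_a(p\to q_a)=p\to\bigwedge_a q_a$ identity you correctly use for (iii); with that substitution your outline goes through.
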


\begin{proof} (i): It is obvious from the definitions.\\ 
(ii): 
Let $v$  be a \qmbc-valuation over $(\mathfrak{A}, {\cal M}({\cal B}))$ such that $v_{\mu}(\alpha \to \beta) \in D$, for every assignment $\mu$. Hence $(v_{\mu}(\alpha))_1 \leq (v_{\mu}(\beta))_1$, for every $\mu$. From this, for every $a \in U$:  $(v_{\mu^{x}_{a}}(\alpha))_1 \leq (v_{\mu^{x}_{a}}(\beta))_1=(v_{\mu}(\beta))_1$, since  $x \notin FV(\beta)$. But then: $(v_{\mu}( \exists x \alpha))_1 = \bigvee_{a \in U} (v_{\mu}(\alpha[x/a]))_1 = \bigvee_{a \in U} (v_{\mu^{x}_{a}}(\alpha))_1 \leq (v_{\mu}(\beta))_1$. Hence, $v_{\mu}( \exists x \alpha \to \beta) \in D$,  for every $\mu$. This shows that $\alpha \to \beta \models_\qmbc \exists x \alpha \to \beta$. Item (iii) is proved analogously.\\[1mm]
(iv): 
Assume that  $t$ is a term free for $x$ in $\alpha$. Let $v$  be a \qmbc-valuation over $(\mathfrak{A}, {\cal M}({\cal B}))$ and let $\mu$ be an assignment. If $b = \termvalue{t}^{\mathfrak{A}}_{\mu}$, then, by Theorem~\ref{substlem}, $v_\mu(\alpha[x/t]) = v_\mu(\alpha[x/\bar{b}])$. Then, $(v_{\mu}(\forall x \alpha))_1 = \bigwedge_{a \in U} (v_{\mu}(\alpha[x/\bar{a}))_1 \leq (v_\mu(\alpha[x/\bar{b}]))_1 = (v_\mu(\alpha[x/t]))_1$. Hence,  $v_{\mu}(\forall x \alpha \to \alpha[x/t]) \in D$.  Item (v) is proved analogously.\\[1mm]
(vi): Let $v$  be a \qmbc-valuation and let $\mu$ be an assignment. If $\alpha$ and $\alpha'$ are variant, so are  $\widehat{\mu}(\alpha)$ and $\widehat{\mu}(\alpha')$. By  Definition~\ref{val}(vii), $v_{\mu}(\alpha \leftrightarrow \alpha') \in D$.
\end{proof}

\begin{corollary}~\label{psound}
Let $v$ be a \qmbc-valuation over $\mathfrak{A}$ and ${\cal M}({\cal B})$. Then:\\
(1)  If $\alpha$ is an instance of a \qmbc\ axiom schema then $v_{\mu}(\alpha) \in D$, for every assignment $\mu$.\\
(2) If $\alpha$ and $\beta$ are formulas such that $v_{\mu}(\alpha) \in D$ and $v_{\mu}(\alpha \to \beta) \in D$ for every assignment $\mu$, then $v_{\mu}(\beta) \in D$ for every assignment $\mu$.\\
(3) If $\alpha$ and $\beta$ are formulas such that $v_{\mu}(\alpha \to \beta) \in D$ for every assignment $\mu$, and if $x$ does not occur free in $\beta$, then $v_{\mu}(\exists x \alpha\to \beta) \in D$ for every $\mu$.\\
(4) If $\alpha$ and $\beta$ are formulas such that $v_{\mu}(\alpha \to \beta) \in D$, for every  $\mu$, and if $x$ does not occur free in $\alpha$, then $v_{\mu}(\alpha\to \forall x\beta) \in D$ for every  $\mu$.
\end{corollary}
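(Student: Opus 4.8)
The plan is to derive all four items directly from Proposition~\ref{paxioms}, since that proposition already packages the semantic validity of every axiom schema and every inference rule of \qmbc. First I would prove item~(1). The \qmbc\ axiom schemas split into two groups: the propositional schemas {\bf (A1)}--{\bf (A11)}, and the quantifier schemas {\bf (Ax12)}--{\bf (Ax14)}. For {\bf (A1)}--{\bf (A11)}, I would argue that for any fixed assignment $\mu$, the map $\alpha \mapsto (v_\mu(\alpha))_1 = \pi_1(v_\mu(\alpha))$ behaves, on the relevant connectives $\wedge,\vee,\to$, exactly as a bivaluation does: by clause~(iii) of Definition~\ref{val} together with clause~(i) of Definition~\ref{defswap}, we have $(v_\mu(\alpha\#\beta))_1 = (v_\mu(\alpha))_1 \# (v_\mu(\beta))_1$ for $\#\in\{\wedge,\vee,\to\}$, and since these are the Boolean operations of the complete Boolean algebra \mA, any instance of a positive-classical tautology (which {\bf (A1)}--{\bf (A9)} are) gets first coordinate $1$, i.e.\ lands in $D$. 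Schemas {\bf (A10)} $\alpha\vee\neg\alpha$ and {\bf (A11)} $\circ\alpha\to(\alpha\to(\neg\alpha\to\beta))$ require using clauses~(ii) and the constraints (ii)--(iii) of Definition~\ref{defswap}: for {\bf (A10)}, $(v_\mu(\neg\alpha))_1 = (v_\mu(\alpha))_2$ and the swap-structure condition $x_1\vee x_2 = 1$ gives $(v_\mu(\alpha))_1 \vee (v_\mu(\neg\alpha))_1 = 1$; for {\bf (A11)}, $(v_\mu(\circ\alpha))_1 = (v_\mu(\alpha))_3$ and the condition $x_1\wedge x_2\wedge x_3 = 0$ forces the first coordinate of the whole implication to be $1$. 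For the quantifier schemas {\bf (Ax12)}, {\bf (Ax13)}, {\bf (Ax14)}, these are precisely Proposition~\ref{paxioms}(v), (iv), (vi) respectively, applied after unfolding $v_\mu$ via Definition~\ref{vmu} and noting that $\widehat{\mu}$ of an instance is again an instance (over the diagram signature). So item~(1) follows.

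Items~(2), (3), (4) are immediate translations. Item~(2) is the content of Proposition~\ref{paxioms}(i): if $v_\mu(\alpha)\in D$ and $v_\mu(\alpha\to\beta)\in D$ for every $\mu$, then in particular for each fixed $\mu$ we have $(v_\mu(\alpha))_1 = 1$ and, since $(v_\mu(\alpha\to\beta))_1 = (v_\mu(\alpha))_1 \to (v_\mu(\beta))_1 = 1$, we get $(v_\mu(\beta))_1 = 1$, i.e.\ $v_\mu(\beta)\in D$. Items~(3) and~(4) are, respectively, the semantic soundness of the rules $(\exists\textbf{-In})$ and $(\forall\textbf{-In})$, and they follow by exactly the same argument used in the proof of Proposition~\ref{paxioms}(ii) and~(iii): from $(v_\mu(\alpha))_1 \leq (v_\mu(\beta))_1$ for all $\mu$, using $x\notin FV(\beta)$ one gets $(v_\mu(\exists x\alpha))_1 = \bigvee_{a\in U}(v_{\mu^x_a}(\alpha))_1 \leq (v_\mu(\beta))_1$ via clause~(v) of Definition~\ref{val} and the Substitution Lemma (Theorem~\ref{substlem}) to pass between $v_\mu(\alpha[x/\bar a])$ and $v_{\mu^x_a}(\alpha)$; dually for~(4) with $\bigwedge$ and $x\notin FV(\alpha)$. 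One should be slightly careful that the hypothesis "for every $\mu$" is what licenses replacing $\mu$ by $\mu^x_a$ in items~(3) and~(4); this is exactly the universal-closure bookkeeping already handled in Proposition~\ref{propquant}.

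I do not expect a genuine obstacle here, since the corollary is a bookkeeping consequence of results already in hand; the only mildly delicate point is keeping the assignment quantification straight (the statements are "$v_\mu(\cdot)\in D$ for every $\mu$", not for a single $\mu$), and making sure that when an axiom instance is evaluated under $v_\mu = v\circ\widehat\mu$, the formula $\widehat\mu$ sends it to is still a legitimate instance of the same schema over $\Theta_U$ — which it is, because substituting constants $\bar a$ for free variables preserves being a term free for a variable, being a variant, and being a propositional-schema instance. With that noted, each of (1)--(4) is a one-line deduction from Proposition~\ref{paxioms} and Definition~\ref{val}.
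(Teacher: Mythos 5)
Your proof is correct and follows essentially the same route as the paper: items (2)--(4) are read off from Proposition~\ref{paxioms}(i)--(iii), and item~(1) combines the validity of the quantifier schemas from Proposition~\ref{paxioms}(iv)--(vi) with the soundness of the propositional axioms. The only difference is presentational: where the paper delegates the propositional part to Theorem~2.2.2 of the cited monograph, you prove it inline via the first-projection argument (using that $\pi_1$ commutes with $\tilde{\wedge},\tilde{\vee},\tilde{\to}$ and the swap-structure constraints $x_1\vee x_2=1$ and $x_1\wedge x_2\wedge x_3=0$ for {\bf (A10)} and {\bf (A11)}), which is exactly the content of that external result.
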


\begin{proof}
Item (1) follows by Theorem 2.2.2 in~\cite{CC16} and by Proposition~\ref{paxioms}(iv)-(vi). The rest of the proof follows by  Proposition~\ref{paxioms}(i)-(iii).
\end{proof}

\noindent
From this corollary it follows easily:

\begin{theorem} {\em (Soundness of \qmbc\ w.r.t. first-order swap structures)} \label{sound-Qmbc-swap}
For every set $\Gamma \cup\{ \varphi\}  \subseteq For(\Theta)$:  if $\Gamma \vdash_\qmbc \varphi$, then  $\Gamma \models_\qmbc \varphi$.
\end{theorem}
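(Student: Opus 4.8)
The plan is to prove soundness by the standard induction on the length of a derivation of $\varphi$ from $\Gamma$ in \qmbc, using Corollary~\ref{psound} as the engine. First I would fix a semantical context $(\mathfrak{A}, {\cal M}({\cal B}))$ and a \qmbc-valuation $v$ over it, and assume that $v_\mu(\gamma) \in D$ for every $\gamma \in \Gamma$ and every assignment $\mu$; the goal is to show $v_\mu(\varphi) \in D$ for every $\mu$. Proceeding by induction on the number of steps in the derivation, the base cases split into two: if $\varphi \in \Gamma$, the conclusion is immediate from the hypothesis; if $\varphi$ is an instance of a \qmbc\ axiom schema, then Corollary~\ref{psound}(1) gives $v_\mu(\varphi) \in D$ for every $\mu$.

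For the inductive step, $\varphi$ is obtained from earlier formulas in the derivation by one of the three inference rules of \qmbc. If $\varphi = \beta$ is obtained by \textbf{(MP)} from $\alpha$ and $\alpha \to \beta$, then by induction hypothesis $v_\mu(\alpha) \in D$ and $v_\mu(\alpha \to \beta) \in D$ for every $\mu$, so Corollary~\ref{psound}(2) yields $v_\mu(\beta) \in D$ for every $\mu$. If $\varphi = (\exists x \alpha \to \beta)$ is obtained by $(\exists\textbf{-In})$ from $\alpha \to \beta$ with $x \notin FV(\beta)$, then the induction hypothesis gives $v_\mu(\alpha \to \beta) \in D$ for every $\mu$, and Corollary~\ref{psound}(3) delivers the conclusion. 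The case of $(\forall\textbf{-In})$ is handled symmetrically via Corollary~\ref{psound}(4). Since in each case the conclusion holds for every assignment $\mu$, the induction goes through, and we conclude $\Gamma \models_{(\mathfrak{A}, {\cal M}({\cal B}))} \varphi$; as this holds for an arbitrary context, $\Gamma \models_\qmbc \varphi$ by Definition~\ref{consrel}.

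The one point requiring a little care — and the reason the induction hypothesis must be phrased ``for every assignment $\mu$'' rather than for a fixed one — is that the quantifier rules $(\exists\textbf{-In})$ and $(\forall\textbf{-In})$ genuinely need the premise to be designated under all assignments simultaneously; this is already baked into the statements of Corollary~\ref{psound}(3)--(4) and ultimately traces back to Proposition~\ref{paxioms}(ii)--(iii), whose proofs use that $(v_\mu(\alpha))_1 \leq (v_\mu(\beta))_1$ for all $\mu$ in order to pass the supremum/infimum through the inequality. So the only genuine work has been front-loaded into Corollary~\ref{psound} (hence into Proposition~\ref{paxioms} and the Substitution Lemma, Theorem~\ref{substlem}); given those, the present theorem is a routine structural induction with no further obstacle. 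I would therefore keep the proof short, essentially just spelling out the three rule cases and invoking the corollary.

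\begin{proof}
By induction on the length of a derivation of $\varphi$ from $\Gamma$ in \qmbc. Fix a context $(\mathfrak{A}, {\cal M}({\cal B}))$ and a \qmbc-valuation $v$ over it such that $v_\mu(\gamma) \in D$ for every $\gamma \in \Gamma$ and every assignment $\mu$. If $\varphi \in \Gamma$, the result is immediate; if $\varphi$ is an instance of a \qmbc\ axiom schema, it follows from Corollary~\ref{psound}(1). For the inductive step, suppose $\varphi$ is the conclusion of an inference rule applied to formulas derived earlier. If $\varphi$ is obtained by \textbf{(MP)}, apply Corollary~\ref{psound}(2) to the induction hypothesis; if $\varphi = (\exists x\alpha \to \beta)$ is obtained by $(\exists\textbf{-In})$ from $\alpha \to \beta$ with $x \notin FV(\beta)$, apply Corollary~\ref{psound}(3); if $\varphi = (\alpha \to \forall x\beta)$ is obtained by $(\forall\textbf{-In})$ from $\alpha \to \beta$ with $x \notin FV(\alpha)$, apply Corollary~\ref{psound}(4). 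In each case $v_\mu(\varphi) \in D$ for every assignment $\mu$, which completes the induction. Hence $\Gamma \models_{(\mathfrak{A}, {\cal M}({\cal B}))} \varphi$; since the context was arbitrary, $\Gamma \models_\qmbc \varphi$.
\end{proof}
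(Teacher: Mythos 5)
Your proof is correct and follows exactly the route the paper intends: the paper itself only remarks that the theorem ``follows easily'' from Corollary~\ref{psound}, and your structural induction on derivation length, with the base cases handled by Corollary~\ref{psound}(1) and the three rules by items (2)--(4), is precisely that argument spelled out. Your observation that the induction hypothesis must be quantified over all assignments $\mu$ (matching Definition~\ref{semcons}) is the right point of care and is handled correctly.
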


\section{Completeness of \qmbc\ w.r.t. swap structures} \label{complete}

In this section the  completeness of \qmbc\ w.r.t. first-order swap structures semantics for \qmbc\ will be obtained. In order to do this, some definitions and results given in Sections 7.5.1 and  7.5.2 of~\cite{CC16} for proving  the completeness theorem for \qmbc\ w.r.t. interpretations will be adapted. In addition, the technique for proving the completeness of \mbc\ w.r.t. swap structures presented in~\cite[Theorem~7.1]{CFG18} will be also used. The first step is considering a notion of $C$-Henkin theory a bit stronger than the one proposed in~\cite[Definition 7.5.1]{CC16}.  

\begin{definition}\label{Henkin}
Consider a theory $\Delta \subseteq$ \sent\ and a nonempty set $C$ of constants of the  signature $\Theta$. Then, $\Delta$ is called a
$C$-\emph{Henkin theory} in \qmbc\ if it satisfies the following: for every formula $\varphi$   with (at most) a free variable $x$, there exists a constant
$c$ in $C$ such that $\Delta \vdash_\qmbc \exists x  \varphi \to \varphi[x/c]$.
\end{definition}

\begin{remark} \label{univ-Henk}
Recall by \cite[Section 2.4]{CC16} that $\bot_\beta \defin \beta \wedge(\neg\beta \wedge \circ\beta)$ is a bottom in \mbc, hence $\sneg_\beta\alpha \defin \alpha \to \bot_\beta$ is a classical negation in \mbc. This construction does not depend on $\beta$ (up to logical equivalence), hence we will write $\sneg\alpha$ instead of $\sneg_\beta\alpha$. This can be also done in \qmbc.
By \cite[Proposition~7.2.2]{CC16} it follows that $\exists x  \sneg\varphi \to \sneg\varphi[x/c] \vdash_\qmbc \varphi[x/c] \to \forall x\varphi$.
Thus, if $\Delta$ is  a $C$-Henkin theory in \qmbc\ and $\varphi$ is a formula with (at most) a free variable $x$  then there is a constant
$c$ in $C$ such that   $\Delta \vdash_\qmbc \varphi[x/c] \to \forall x  \varphi$.
\end{remark}

\begin{definition} \label{Qmbc-C} Let $\Theta_{C}$  be the signature obtained from $\Theta$ by adding a set $C$ of new individual constants. The consequence relation $\vdash^{C}_{\qmbc}$ is the consequence relation of \qmbc\ over the signature $\Theta_{C}$.
\end{definition}

\noindent
Recall that, given a Tarskian and finitary logic  ${\bf L}=\langle  For,\vdash \rangle$ (where $For$ is the set of formulas of {\bf L}), and given a set $\Gamma \cup \{\varphi\} \subseteq For$, the set $\Gamma$ is said to be {\em maximally non-trivial with respect to $\varphi$ in {\bf L}} if the following holds:~(i)~$\Gamma \nvdash \varphi$, and~(ii)~$\Gamma,\psi \vdash\varphi$ for every $\psi \notin \Gamma$.

\begin{proposition} {\em (\cite[Corollary 7.5.4]{CC16})} \label{saturated}
Let  $\Gamma \cup \{\varphi\} \subseteq$ \sent\ such that $\Gamma \nvdash_{\qmbc} \varphi$. Then, there exists a set of sentences $\Delta \subseteq$ \sent\ which is maximally non-trivial with respect to $\varphi$ in \qmbc\ (by restricting $\vdash_{\qmbc}$ to sentences) and such that $\Gamma \subseteq \Delta$.
\end{proposition}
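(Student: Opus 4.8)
We must show that if $\Gamma \cup \{\varphi\} \subseteq$ \sent\ with $\Gamma \nvdash_{\qmbc} \varphi$, then there is $\Delta \subseteq$ \sent\ with $\Gamma \subseteq \Delta$ which is maximally non-trivial with respect to $\varphi$ in \qmbc\ restricted to sentences.

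\medskip

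The plan is to apply a standard Lindenbaum--Asser-style argument, adapted to the restriction to sentences. First I would fix an enumeration $\psi_0, \psi_1, \psi_2, \ldots$ of the set \sent\ of all sentences over $\Theta$ (this set is denumerable, since $\Theta$ and $Var$ are denumerable). I then build an increasing chain of sentence-theories $\Gamma = \Delta_0 \subseteq \Delta_1 \subseteq \cdots$ by the recursion: $\Delta_{n+1} = \Delta_n \cup \{\psi_n\}$ if $\Delta_n \cup \{\psi_n\} \nvdash_{\qmbc} \varphi$, and $\Delta_{n+1} = \Delta_n$ otherwise. Put $\Delta = \bigcup_{n \geq 0} \Delta_n$. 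Each $\Delta_n$ satisfies $\Delta_n \nvdash_{\qmbc} \varphi$ by induction on $n$: the base case is the hypothesis $\Gamma \nvdash_{\qmbc} \varphi$, and the inductive step is immediate from the case split in the definition of $\Delta_{n+1}$.

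\medskip

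Next I would verify that $\Delta$ itself satisfies $\Delta \nvdash_{\qmbc} \varphi$. This is where finitariness of \qmbc\ is used: if $\Delta \vdash_{\qmbc} \varphi$, then some finite subset $\Delta_0' \subseteq \Delta$ derives $\varphi$, and since the $\Delta_n$ form a chain, $\Delta_0' \subseteq \Delta_n$ for some $n$, contradicting $\Delta_n \nvdash_{\qmbc} \varphi$. Then I would check maximal non-triviality: suppose $\psi \in$ \sent\ with $\psi \notin \Delta$. Since $\psi = \psi_n$ for some $n$, and $\psi_n \notin \Delta \supseteq \Delta_{n+1}$, the construction must have taken the second branch at stage $n$, i.e. $\Delta_n \cup \{\psi_n\} \vdash_{\qmbc} \varphi$; since $\Delta_n \subseteq \Delta$, monotonicity gives $\Delta \cup \{\psi\} \vdash_{\qmbc} \varphi$, as required. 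Finally $\Gamma = \Delta_0 \subseteq \Delta$, completing the proof.

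\medskip

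There is essentially no hard part here: the argument is the routine Lindenbaum construction, and the only points requiring care are (a) performing the enumeration over \emph{sentences only}, so that $\vdash_{\qmbc}$ restricted to sentences is the relevant consequence relation throughout, and (b) invoking that \qmbc\ is finitary (which holds, being a Hilbert calculus) to pass from the chain to its union. One should note that the result does not require Tarskianity beyond monotonicity, nor any Henkin property --- those enter only later when this $\Delta$ is used to build the canonical structure. Since this proposition is cited from \cite[Corollary 7.5.4]{CC16}, the proof here could simply be this sketch, or omitted with a reference.
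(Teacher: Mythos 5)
Your proof is correct and is essentially the standard Lindenbaum--\L o\'s argument that underlies the cited result in \cite[Corollary 7.5.4]{CC16}; the paper itself offers no proof beyond the citation, so there is nothing to diverge from. The one point to flag is your opening assumption that \sent\ is denumerable: Definition~\ref{fosig} places no cardinality restriction on the sets $\mathcal{C}$, $\mathcal{F}_n$, $\mathcal{P}_n$, so the signature (and hence the set of sentences) may be uncountable. In that case the step-by-step enumeration should be replaced by Zorn's lemma applied to the poset of sets of sentences $\Delta' \supseteq \Gamma$ with $\Delta' \nvdash_{\qmbc} \varphi$, ordered by inclusion; finitariness of $\vdash_{\qmbc}$ (exactly the property you already invoke) guarantees that the union of any chain in this poset still fails to derive $\varphi$, and a maximal element is then maximally non-trivial with respect to $\varphi$ by the same monotonicity argument you give. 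With that adjustment the proof covers the general case; everything else in your write-up is fine.
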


\begin{definition} Let $\Delta \subseteq Sen(\Theta)$ be non-trivial in \qmbc, that is: there is some sentence $\varphi$ in $Sen(\Theta)$ such that $\Delta \nvdash_{\qmbc} \varphi$. Let ${\equiv_{\Delta}} \subseteq$ \sent$^{2}$ be the relation in \sent\ defined as follows:  $\alpha \equiv_{\Delta} \beta$ iff $\Delta \vdash_{\qmbc} \alpha \leftrightarrow\beta$. 
\end{definition}

\noindent
By adapting the proof of~\cite[Theorem~7.1]{CFG18} it follows that $\equiv_{\Delta}$ is an equivalence relation. Moreover, in the quotient set  $A_{\Delta} \defin$ \sent$/_{\equiv_{\Delta}}$ it is possible to define binary operators $\bar\wedge$, $\bar\vee$, $\bar\to$ as follows: $[\alpha]_\Delta \bar\# [\beta]_\Delta \defin [\alpha \# \beta]_\Delta$ for any $\# \in \{\wedge, \vee, \to\}$, where $[\alpha]_\Delta$ denotes the equivalence class of formula $\alpha$ w.r.t. $\equiv_{\Delta}$. Using the axioms of \qmbc\ coming from  \mbc\ it follows:

\begin{proposition} \label{Adelta}
The structure $\mathcal{A}_{\Delta} \defin\langle A_{\Delta}, \bar\wedge, \bar\vee, \bar\to, 0_{\Delta},1_{\Delta}\rangle$ is a Boolean algebra with $0_{\Delta} \defin [\varphi \wedge (\neg \varphi \wedge \circ \varphi)]_{\Delta}$ and  $1_{\Delta} \defin [\varphi \vee \neg \varphi]_{\Delta}$, for any sentence~$\varphi$.
\end{proposition}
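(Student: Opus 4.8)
The plan is to verify directly that $\mathcal{A}_\Delta$ satisfies the equational axioms of Boolean algebras, using that the underlying propositional fragment of $\qmbc$ contains positive classical logic $\cplp$ together with the defined classical negation $\sneg$ recalled in Remark~\ref{univ-Henk}. Since $\equiv_\Delta$ has already been shown to be a congruence with respect to $\wedge,\vee,\to$ (so that $\bar\wedge,\bar\vee,\bar\to$ are well defined on $A_\Delta$), the work reduces to checking that the quotient validates the Boolean identities. I would organize this around the following observation: for sentences $\alpha,\beta$, one has $[\alpha]_\Delta \le [\beta]_\Delta$ in the natural lattice preorder (i.e.\ $[\alpha]_\Delta\bar\wedge[\beta]_\Delta=[\alpha]_\Delta$) iff $\Delta\vdash_\qmbc \alpha\to\beta$; this follows from axioms {\bf (A3)}--{\bf (A5)} and is the bridge between provability and the algebraic order.

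First I would establish that $\langle A_\Delta,\bar\wedge,\bar\vee\rangle$ is a distributive lattice: commutativity, associativity, idempotency and the absorption laws all follow from the corresponding theorems of $\cplp$ (provable from {\bf (A1)}--{\bf (A9)} via {\bf (MP)}), pushed through the congruence $\equiv_\Delta$. Distributivity is likewise a $\cplp$-theorem. Next I would show $0_\Delta$ and $1_\Delta$ are the bottom and top: $1_\Delta=[\varphi\vee\neg\varphi]_\Delta$ is the top because $\Delta\vdash_\qmbc \psi\to(\varphi\vee\neg\varphi)$ for every sentence $\psi$ (using {\bf (A10)} and {\bf (A1)}), and independence of the chosen $\varphi$ follows because any two instances of {\bf (A10)} are provably equivalent in $\Delta$; dually, $0_\Delta=[\varphi\wedge(\neg\varphi\wedge\circ\varphi)]_\Delta$ is the bottom because $\bot_\varphi$ is a bottom in $\mbc$ (Remark~\ref{univ-Henk}), so $\Delta\vdash_\qmbc \bot_\varphi\to\psi$ for every $\psi$ via {\bf (A11)}, and again independence of $\varphi$ holds up to $\equiv_\Delta$. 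Finally, for complementation I would take the complement of $[\alpha]_\Delta$ to be $[\sneg\alpha]_\Delta$ and verify $[\alpha]_\Delta\bar\vee[\sneg\alpha]_\Delta=1_\Delta$ and $[\alpha]_\Delta\bar\wedge[\sneg\alpha]_\Delta=0_\Delta$; both reduce to the facts that $\sneg$ is a classical negation in $\mbc$, i.e.\ $\vdash_\mbc \alpha\vee\sneg\alpha$ and $\alpha,\sneg\alpha\vdash_\mbc\bot_\varphi$, which are among the $\mbc$-theorems invoked in Remark~\ref{univ-Henk}. Together with the lattice structure this gives that $\mathcal{A}_\Delta$ is a Boolean algebra, with $\bar\to$ agreeing with the Boolean implication because $\vdash_\mbc (\alpha\to\beta)\leftrightarrow(\sneg\alpha\vee\beta)$.

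The only subtle point — and the one I would flag as the main obstacle — is the well-definedness and independence claims: one must be careful that $\equiv_\Delta$ is a congruence for $\sneg$ (which it is, since $\sneg$ is a composite of $\to$ and a fixed sentence, and $\equiv_\Delta$ is a congruence for $\to$), and that $0_\Delta$, $1_\Delta$ do not depend on the auxiliary sentence $\varphi$. The latter is where non-triviality of $\Delta$ is actually used: it guarantees $A_\Delta$ is nondegenerate, but more to the point the independence arguments require that all instances $\varphi\vee\neg\varphi$ (resp.\ all $\bot_\varphi$) lie in a single $\equiv_\Delta$-class, which follows from Remark~\ref{univ-Henk}'s assertion that the $\sneg$-construction does not depend on $\beta$ up to logical equivalence. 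Everything else is a routine transcription of classical propositional tautologies through the congruence, so I would present those verifications compactly, citing $\cplp$ and the relevant $\mbc$-lemmas rather than expanding each Hilbert-style derivation.
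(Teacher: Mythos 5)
Your proof is correct and follows essentially the route the paper intends: the paper gives no explicit proof of this proposition, merely noting that it "follows using the axioms of \qmbc\ coming from \mbc" by adapting the Lindenbaum--Tarski-style argument of Theorem~7.1 of~\cite{CFG18}, which is precisely the verification you carry out (congruence and lattice laws from \cplp, top and bottom from {\bf (A10)} and {\bf (A11)}, complement via the derived classical negation $\sneg$, and $\bar\to$ as Boolean implication). The only nitpick is that non-triviality of $\Delta$ is used for nondegeneracy rather than for the independence of $0_\Delta,1_\Delta$ from the auxiliary sentence $\varphi$ --- the latter holds simply because all instances of $\varphi\vee\neg\varphi$ are theorems and all $\bot_\varphi$ imply every sentence, hence each family lies in a single $\equiv_\Delta$-class --- but this does not affect correctness.
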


\noindent
In order to construct the canonical model for \qmbc\ w.r.t. $\Delta$, the Boolean algebra $\mathcal{A}_{\Delta}$ needs to be completed. Recall (see, for instance, \cite[Chapter~25]{gi:ha:09}) that a Boolean algebra \mB\ is a {\em completion} of a Boolean algebra \mA\ if:~(1) \mB\ is complete, and~(2) \mB\ includes \mA\ as a dense subalgebra (that is: every element in $B$ is the supremum, in \mB, of some subset of $A$). As a consequence of the definition, it follows that \mB\ preserves all the existing infima and suprema in \mA. In formal terms: there exists a monomorphism of Boolean algebras (therefore an injective mapping) $\ast:\mA \to \mB$ such that $\ast(\bigvee_\mA X)= \bigvee_\mB \ast[X]$  for every $X\subseteq A$ such that the supremum $\bigvee_\mA X$ exists, where $\ast[X]=\{\ast(a) \ : \ a \in X\}$. Analogously, $\ast(\bigwedge_\mA X)= \bigwedge_\mB \ast[X]$  for every $X\subseteq A$ such that the infimum $\bigwedge_\mA X$ exists. By the (independent) results of MacNeille and Tarski, it is known that every Boolean algebra has a completion; moreover, the completion is unique up to isomorphisms. Thus, let $C\mA_\Delta$ be the completion of $\mA_\Delta$ and let  $\ast:\mA_\Delta \to C\mA_\Delta$ be the associated monomorphism.

\begin{definition}\label{nma}
 Let $C\mA_\Delta$ be the complete Boolean algebra defined as above. The full swap structure for \mbc\ over $C\mA_\Delta$ (recall Definition~\ref{deffull}) will be denoted by  $\mathcal{B}_{\Delta}$. The associated Nmatrix (recall Definition~\ref{defNmat}) will be denoted by $\mathcal{M}(\mathcal{B}_{\Delta}) \defin (\mathcal{B}_{\Delta}, D_{\Delta})$. 
\end{definition}

\noindent
Notice that $(\ast([\alpha]_{\Delta}), \ast([\beta]_{\Delta}), \ast([\gamma]_{\Delta})) \in D_{\Delta} \ \mbox{ iff } \ \Delta \vdash_{\qmbc} \alpha$.

\begin{definition} (Canonical Structure) \label{str}
Let $\Theta$ be a signature with some individual constant. Let $\Delta \subseteq Sen(\Theta)$ be non-trivial in \qmbc, let $\mathcal{M}(\mathcal{B}_{\Delta})$ be as in Definition~\ref{nma}, and let $U = $~\ctert. The {\em canonical structure induced by $\Delta$} is the  structure  $\mathfrak{A}_\Delta = \langle U, I_{\mathfrak{A}_\Delta} \rangle$ over $\mathcal{M}(\mathcal{B}_{\Delta})$ and $\Theta$ such that:
\begin{itemize}
\item[-] $c^{\mathfrak{A}_\Delta} = c $, for each individual constant $c$;
\item[-] $f^{\mathfrak{A}_\Delta}: U^n \to U$ is such that $f^{\mathfrak{A}_\Delta} (t_1, \ldots, t_n) = f(t_1, \ldots, t_n)$, for each function symbol $f$ of arity $n$;
\item[-] $P^{\mathfrak{A}_\Delta}(t_1, \ldots, t_n) = (\ast([\varphi]_{\Delta}), \ast([\neg \varphi]_{\Delta}), \ast([\circ \varphi]_{\Delta})) $ with $\varphi = P(t_1, \ldots, t_n)$, for each predicate symbol $P$ of arity $n$.
\end{itemize}
\end{definition}

\noindent Notice that $[\varphi]_{\Delta} \bar\vee [\neg \varphi]_{\Delta} = [\varphi \vee \neg \varphi]_{\Delta}= 1$ and $[\varphi]_{\Delta} \bar\wedge [\neg \varphi]_{\Delta} \bar\wedge [\cons \varphi]_{\Delta} = [\varphi \wedge \neg \varphi \wedge \cons\varphi]_{\Delta}= 0$. Thus $\ast([\varphi]_{\Delta}) \vee \ast([\neg \varphi]_{\Delta}) = \ast([\varphi]_{\Delta} \bar\vee [\neg \varphi]_{\Delta}) = 1$, and  $\ast([\varphi]_{\Delta}) \wedge \ast([\neg \varphi]_{\Delta}) \wedge \ast([\cons\varphi]_{\Delta})= \ast([\varphi]_{\Delta} \bar\wedge [\neg \varphi]_{\Delta} \bar\wedge [\cons \varphi]_{\Delta}) = 0$. Hence $P^{\mathfrak{A}_\Delta}(t_1, \ldots, t_n) \in |\mathcal{B}_{\Delta}|$ and so $\mathfrak{A}_\Delta $ is indeed a structure over $\mathcal{M}(\mathcal{B}_{\Delta})$ and $\Theta$.

\begin{definition} \label{transla}
Let $(\cdot)^\triangleright:(Ter(\Theta_U) \cup For(\Theta_U)) \to (Ter(\Theta) \cup For(\Theta))$ be the mapping such that $\left(\,s\,\right)^\triangleright$ is the expression obtained from $s$ by substituting every occurrence of a constant $\bar{t}$ by the term  $t$ itself, for $t \in CTer(\Theta)$.
\end{definition}

\noindent For instance, $\left(\,P(f(\bar{c},x)) \wedge Q\big(\,\overline{f(c,x)},z\,\big)\,\right)^\triangleright = P(f(c,x)) \wedge Q(f(c,x),z)$.

\begin{definition} (Canonical valuation) \label{valca}
Let $\Delta \subseteq$ \sent\ be a set of sentences over a signature $\Theta$ such  that $\Delta$ is a $C$-Henkin theory in \qmbc\ for a nonempty set $C$ of individual constants of $\Theta$, and $\Delta$ is maximally non-trivial with respect to $\varphi$ in \qmbc, for some sentence $\varphi$. The {\em  canonical \qmbc-valuation induced by $\Delta$ over $\mathfrak{A}_{\Delta}$ and $\mathcal{M}(\mathcal{B}_{\Delta})$} is the function $v_{\Delta}: \sena \to |\mathcal{B}_{\Delta}|$ such that $v_{\Delta}(\psi) = (\ast([(\psi)^\triangleright]_{\Delta}), \ast([\neg(\psi)^\triangleright]_{\Delta}), \ast([\circ(\psi)^\triangleright]_{\Delta}))$, for every sentence $\psi$ over $\Theta_U$.
\end{definition}

\noindent
Notice that $v_{\Delta}(\psi) \in D_{\Delta} \mbox{ iff } \Delta \vdash_{\qmbc} (\psi)^\triangleright$.

\begin{lemma} \label{quantOK}
Let $\Delta \subseteq$ \sent\ be as in Definition~\ref{valca}. Then, for every formula $\psi(x)$ in which $x$ is the unique variable (possibly) occurring free, it holds:\\[1mm]
(1) $[\forall x \psi]_\Delta = \bigwedge_{\mA_\Delta} \{ [\psi[x/t]]_\Delta \ :  \ t \in CTer(\Theta)\}$, where $\bigwedge_{\mA_\Delta}$ denotes  an existing infimum in the Boolean algebra $\mathcal{A}_{\Delta}$;\\[1mm]
(2) $[\exists x \psi]_\Delta = \bigvee_{\mA_\Delta} \{ [\psi[x/t]]_\Delta \ :  \ t \in CTer(\Theta)\}$, where $\bigvee_{\mA_\Delta}$ denotes  an existing supremum in the Boolean algebra $\mathcal{A}_{\Delta}$.
\end{lemma}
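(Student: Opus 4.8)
<br>

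**Plan of proof.**

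The statement to prove is Lemma~\ref{quantOK}: in the canonical setup (where $\Delta$ is a $C$-Henkin theory, maximally non-trivial with respect to some sentence $\varphi$), the equivalence class $[\forall x\psi]_\Delta$ equals the infimum in $\mathcal{A}_\Delta$ of $\{[\psi[x/t]]_\Delta : t\in CTer(\Theta)\}$, and dually for $\exists$.

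Key steps:

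1. **$[\forall x\psi]_\Delta$ is a lower bound.** For every closed term $t\in CTer(\Theta)$, axiom {\bf (Ax13)} gives $\vdash_\qmbc \forall x\psi \to \psi[x/t]$ (here $t$ is a term free for $x$ in $\psi$ since $t$ is closed). Hence $\Delta\vdash_\qmbc \forall x\psi\to\psi[x/t]$, which translates, via the construction of $\mathcal{A}_\Delta$ and the Boolean-algebra structure of Proposition~\ref{Adelta}, into $[\forall x\psi]_\Delta \leq [\psi[x/t]]_\Delta$ in $\mathcal{A}_\Delta$. (The order on $\mathcal{A}_\Delta$ is the Boolean order: $[\alpha]_\Delta\le[\beta]_\Delta$ iff $\Delta\vdash_\qmbc\alpha\to\beta$; one should first record this fact, which follows from the positive fragment of \mbc.)

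2. **$[\forall x\psi]_\Delta$ is the greatest lower bound.** Suppose $[\gamma]_\Delta$ (for some sentence $\gamma$ over $\Theta$) is a lower bound: $\Delta\vdash_\qmbc\gamma\to\psi[x/t]$ for every $t\in CTer(\Theta)$. We must show $\Delta\vdash_\qmbc\gamma\to\forall x\psi$. Here is where the $C$-Henkin property is used: since $\Delta$ is $C$-Henkin, there is a constant $c\in C$ with $\Delta\vdash_\qmbc\exists x\neg_{cl}$-type formula — more precisely, by Remark~\ref{univ-Henk} there is a constant $c$ such that $\Delta\vdash_\qmbc \psi[x/c]\to\forall x\psi$. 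Since $c\in CTer(\Theta)$, the hypothesis gives $\Delta\vdash_\qmbc\gamma\to\psi[x/c]$, and chaining the two yields $\Delta\vdash_\qmbc\gamma\to\forall x\psi$, i.e. $[\gamma]_\Delta\le[\forall x\psi]_\Delta$. This establishes that the infimum exists in $\mathcal{A}_\Delta$ and equals $[\forall x\psi]_\Delta$.

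3. **The existential case.** Argue dually. {\bf (Ax12)} gives $[\psi[x/t]]_\Delta\le[\exists x\psi]_\Delta$ for every $t$, so $[\exists x\psi]_\Delta$ is an upper bound. For leastness among upper bounds, suppose $\Delta\vdash_\qmbc\psi[x/t]\to\delta$ for every $t\in CTer(\Theta)$, with $x\notin FV(\delta)$ (taking $\delta$ a sentence over $\Theta$ this is automatic). By the $C$-Henkin property applied to $\psi$, pick $c\in C$ with $\Delta\vdash_\qmbc\exists x\psi\to\psi[x/c]$; then $\Delta\vdash_\qmbc\exists x\psi\to\delta$, so $[\exists x\psi]_\Delta\le[\delta]_\Delta$. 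Hence $[\exists x\psi]_\Delta$ is the supremum.

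**Main obstacle.** The only real subtlety is a bookkeeping one rather than a conceptual one: one must be careful that the witnessing constant $c$ furnished by the $C$-Henkin property lies in $CTer(\Theta)$ (it does, being an individual constant of $\Theta$), so that it is a legitimate instance of the index set $\{[\psi[x/t]]_\Delta : t\in CTer(\Theta)\}$ over which the inf/sup is taken; and one must verify that $\psi$ really has at most the single free variable $x$, so that $\psi[x/c]$ and $\forall x\psi$ are sentences and Remark~\ref{univ-Henk} applies verbatim. Beyond that, everything reduces to the definition of $\equiv_\Delta$, the Boolean order on $\mathcal{A}_\Delta$, axioms {\bf (Ax12)}--{\bf (Ax13)}, and Remark~\ref{univ-Henk}; no induction on formula complexity is needed.
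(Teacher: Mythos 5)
Your proof is correct and follows essentially the same route as the paper's: lower/upper bounds come from \textbf{(Ax13)}/\textbf{(Ax12)}, and the extremality of $[\forall x\psi]_\Delta$ and $[\exists x\psi]_\Delta$ comes from the $C$-Henkin property (via Remark~\ref{univ-Henk} for the universal case), using that the witnessing constant lies in $CTer(\Theta)$. The only cosmetic difference is that the paper writes out the existential case and declares the universal case analogous, whereas you spell out the universal case first.
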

\begin{proof} \ \\
(2)
Observe that, in $\mA_\Delta$, $[\alpha]_\Delta \leq [\beta]_\Delta$ iff $\Delta \vdash_{\qmbc} \alpha \rightarrow\beta$. Let $\alpha$ be a formula in which $x$ is the unique variable (possibly) occurring free. Then $[\alpha[x/t]]_\Delta \leq [\exists x \alpha]_\Delta$ for every $t \in  CTer(\Theta)$, by {\bf (Ax12)}. Let $\beta$ be a sentence such that $[\alpha[x/t]]_\Delta \leq [\beta]_\Delta$ for every $t \in  CTer(\Theta)$. That is, $\Delta \vdash_\qmbc \alpha[x/t] \to \beta$ for every $t \in  CTer(\Theta)$. Since $\Delta$ is a $C$-Henkin theory, there is a constant $c$ of $\Theta$ such that $\Delta \vdash_\qmbc \exists x \alpha  \to \alpha[x/c]$. By hypothesis, $\Delta \vdash_\qmbc \alpha[x/c] \to \beta$ and so $\Delta \vdash_\qmbc \exists x \alpha \to \beta$. That is, $[\exists x \alpha]_\Delta \leq  [\beta]_\Delta$. This shows that $[\exists x \alpha]_\Delta = \bigvee_{\mA_\Delta} \{ [\alpha[x/t]]_\Delta \ :  \ t \in CTer(\Theta)\}$.  \\[2mm]
Item (1)  is proved analogously, but now by using Remark~\ref{univ-Henk}. 
\end{proof}

\begin{theorem} \label{canval}
The canonical \qmbc-valuation $v_{\Delta}$ is a \qmbc-valuation over $\mathfrak{A}_{\Delta}$ and $\mathcal{M}(\mathcal{B}_{\Delta})$.
\end{theorem}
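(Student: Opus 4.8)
The plan is to verify that the canonical valuation $v_\Delta$ satisfies each of the clauses (i)--(vii) of Definition~\ref{val}, working through them in order and invoking the algebraic facts already assembled. First I would dispose of the atomic case (clause (i)): for $\psi = P(t_1,\ldots,t_n)$ with $t_i \in CTer(\Theta_U)$, one checks that $\termvalue{t_i}^{\widehat{\mathfrak{A}_\Delta}} = (t_i)^\triangleright$ by an easy induction on terms (using that $f^{\mathfrak{A}_\Delta}$ is term-building and $\bar{t}^{\;\triangleright}=t$), so that $P^{\mathfrak{A}_\Delta}(\termvalue{t_1}^{\widehat{\mathfrak{A}_\Delta}},\ldots,\termvalue{t_n}^{\widehat{\mathfrak{A}_\Delta}})$ equals, by Definition~\ref{str}, the triple $(\ast([\psi^\triangleright]_\Delta),\ast([\neg\psi^\triangleright]_\Delta),\ast([\circ\psi^\triangleright]_\Delta)) = v_\Delta(\psi)$. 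For the propositional clauses (ii) and (iii) I would use that $(\cdot)^\triangleright$ commutes with the connectives, that $[\cdot]_\Delta$ and $\ast$ are homomorphisms on the Boolean reducts, and — crucially — the fact that in the \emph{full} swap structure $\mathcal{B}_\Delta$ the multioperations $\tilde{\#}$, $\tilde{\neg}$, $\tilde{\circ}$ return \emph{all} snapshots with the prescribed first coordinate; so it suffices to check that the first coordinate of $v_\Delta(\#\varphi)$ (resp.\ $v_\Delta(\varphi\#\psi)$) is the Boolean value $\#$ applied to the first coordinates of $v_\Delta(\varphi)$ (resp.\ of $v_\Delta(\varphi),v_\Delta(\psi)$), which follows from Proposition~\ref{Adelta}, the homomorphism property of $\ast$, and the \mbc\ theorems $\neg(\alpha\#\beta) \leftrightarrow \cdots$ implicitly packaged in the fact that $[\neg\varphi^\triangleright]_\Delta$ and $[\circ\varphi^\triangleright]_\Delta$ are well-defined elements (no further constraint is imposed by $\mathcal{B}_\Delta$ beyond membership in $B_{C\mathcal{A}_\Delta}$, which we already verified after Definition~\ref{str}).

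Next come the quantifier clauses (iv) and (v), and this is where Lemma~\ref{quantOK} does the work. For clause (v), $\pi_1(v_\Delta(\exists x\varphi)) = \ast([(\exists x\varphi)^\triangleright]_\Delta) = \ast([\exists x \, \varphi^\triangleright]_\Delta)$; by Lemma~\ref{quantOK}(2) this equals $\ast(\bigvee_{\mathcal{A}_\Delta}\{[\varphi^\triangleright[x/t]]_\Delta : t \in CTer(\Theta)\})$, and since $\ast$ is a monomorphism into a \emph{completion} it preserves existing suprema, so this is $\bigvee_{C\mathcal{A}_\Delta}\{\ast([\varphi^\triangleright[x/t]]_\Delta) : t \in CTer(\Theta)\}$. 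It remains to match this against $\bigvee\{\pi_1(v_\Delta(\varphi[x/\bar{a}])) : a \in U\}$ where $U = CTer(\Theta)$; here one observes that for $a = t \in CTer(\Theta)$ the constant $\bar{a}$ satisfies $(\varphi[x/\bar{a}])^\triangleright = \varphi^\triangleright[x/t]$ (again because $\bar{t}^{\;\triangleright}=t$), so the two index sets of Boolean values literally coincide and $v_\Delta(\exists x\varphi)$ lies in the prescribed set. Clause (iv) is symmetric via Lemma~\ref{quantOK}(1), and clause (vii) (variants) is immediate since $\alpha,\alpha'$ variant implies $\vdash_\qmbc \alpha\leftrightarrow\alpha'$, hence $\Delta\vdash_\qmbc\alpha^\triangleright\leftrightarrow(\alpha')^\triangleright$, so $[\alpha^\triangleright]_\Delta = [(\alpha')^\triangleright]_\Delta$ and likewise for $\neg$ and $\circ$ after one notes that \qmbc\ proves $\neg\alpha\leftrightarrow\neg\alpha'$ and $\circ\alpha\leftrightarrow\circ\alpha'$ for variants.

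The main obstacle will be clause (vi), the substitution-coherence conditions (vi.1)--(vi.3). The key observation that unlocks it is that for the canonical valuation the hypothesis is \emph{automatically available} in a strong form: whenever $t$ is free for $z$ in $\varphi$ and $b = \termvalue{t}^{\widehat{\mathfrak{A}_\Delta}}_\mu$, the closed term $b$ (an element of $U = CTer(\Theta)$) satisfies $(\widehat{\mu}(\varphi[z/t]))^\triangleright = (\widehat{\mu}(\varphi[z/\bar{b}]))^\triangleright$ as formulas over $\Theta$ — because substituting the closed term $b$ for $z$ and then stripping bars yields the same syntactic object as substituting $t$, evaluating $\widehat{\mu}$, and stripping bars (both amount to replacing $z$ and all free variables by the corresponding closed terms of $\Theta$). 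Since $v_\Delta$ depends on its argument only through $(\cdot)^\triangleright$ composed with $[\cdot]_\Delta$, this gives $v_\Delta(\widehat{\mu}(\varphi[z/t])) = v_\Delta(\widehat{\mu}(\varphi[z/\bar{b}]))$ outright, and then (vi.1)--(vi.3) hold \emph{a fortiori} because both sides of each desired equation are equal for the same reason (the expressions $(\widehat{\mu}(\#\varphi[z/t]))^\triangleright$ and $(\widehat{\mu}(\#\varphi[z/\bar{b}]))^\triangleright$, etc., are again literally the same formula of $\Theta$). In other words, the whole point of the canonical construction is that substitution by a closed term and the bar/strip operations never introduce genuine ambiguity, so clause (vi) is satisfied vacuously-strongly rather than conditionally. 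I would state this as a short lemma (``$(\widehat{\mu}(s[z/t]))^\triangleright = (\widehat{\mu}(s))^\triangleright[z/b]$ when $b=\termvalue{t}^{\widehat{\mathfrak{A}_\Delta}}_\mu$'') proved by induction on $s$, and then all parts of clause (vi) follow in one line; assembling (i)--(vii) completes the proof that $v_\Delta$ is a \qmbc-valuation.
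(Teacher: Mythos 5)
Your proposal is correct and follows essentially the same route as the paper's proof: clause by clause through Definition~\ref{val}, reducing (ii)--(iii) to first-coordinate identities in the full swap structure, using Lemma~\ref{quantOK} plus the suprema/infima-preservation of the completion map $\ast$ for (iv)--(v), and resolving (vi) via the syntactic identity $(\widehat{\mu}(\varphi[z/t]))^{\triangleright}=(\widehat{\mu}(\varphi[z/\bar{b}]))^{\triangleright}$ proved by induction, exactly as in the paper. The only quibble is the restated auxiliary lemma $(\widehat{\mu}(s[z/t]))^{\triangleright}=(\widehat{\mu}(s))^{\triangleright}[z/b]$, whose right-hand side is not quite well-formed since $\widehat{\mu}$ already closes off $z$; the version you actually use in the argument is the correct one.
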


\begin{proof}
Let us see that $v_{\Delta}$ satisfies all the requirements of Definition~\ref{val}.\\[1mm]
(i) If $\varphi$ is an  atomic formula $P(t_1,\ldots,t_n)$ then:\\
$v_{\Delta}(\varphi) =  (\ast([(\varphi)^{\triangleright}]_{\Delta}), \ast([\neg (\varphi)^{\triangleright}]_{\Delta}), \ast([\circ (\varphi)^{\triangleright}]_{\Delta})) =P^{\mathfrak{A}_{\Delta}}((t_1)^{\triangleright}, \ldots, (t_n)^{\triangleright}) =$ \\ $ P^{\mathfrak{A}_{\Delta}}(\termvalue{t_1}^{\widehat{\mathfrak{A}_{\Delta}}},\ldots,\termvalue{t_n}^{\widehat{\mathfrak{A}_{\Delta}}}) $.\\[1mm]
(ii) $v_{\Delta}(\neg\psi) = (\ast([\neg(\psi)^\triangleright]_{\Delta}), \ast([\neg\neg (\psi)^\triangleright ]_{\Delta}), \ast([\circ \neg (\psi)^\triangleright]_{\Delta})) \in \tilde{\neg} v_{\Delta}(\psi)$. On the other hand, $v_{\Delta}(\circ\psi) = (\ast([\circ(\psi)^\triangleright]_{\Delta}), \ast([\neg{\circ} (\psi)^\triangleright ]_{\Delta}), \ast([{\circ} {\circ} (\psi)^\triangleright]_{\Delta})) \in \tilde{\circ} v_{\Delta}(\psi)$.\\[1mm]
(iii) Since $\ast([\delta \# \psi]_{\Delta})=\ast([\delta]_{\Delta})\,\#\, {\ast}([\psi]_{\Delta})$, then $v_{\Delta}(\delta \# \psi) \in v(\delta) \tilde{\#} v_{\Delta}(\psi)$, for every $\#\in \{\wedge,\vee, \to\}$.\\[1mm]
(iv) By Lemma~\ref{quantOK} (and recalling that $U=CTer(\Theta)$),
$[\forall x \psi]_\Delta = \bigwedge_{\mA_\Delta} \{ [\psi[x/t]]_\Delta \ :  \ t \in U\}$ and so $\ast([\forall x \psi]_\Delta) = \bigwedge_{C\mA_{\Delta}} \{ \ast([\psi[x/t]]_\Delta) \ :  \ t \in U\}$. Then, $(v_{\Delta}(\forall x\psi))_1 = {\ast}([(\forall x\psi)^\triangleright]_{\Delta}) = \bigwedge_{t \in U} \ast([(\psi[x/\bar t])^\triangleright]_\Delta) = \bigwedge_{t \in U} (v_\Delta(\psi[x/\bar t]))_1$. \\[1mm] 
(v) The case $\exists x \psi$  is treated analogously.\\[1mm]
(vi) Let $t$ be free for $z$ in $\varphi$, $\mu$ an assignment and  $b = \termvalue{t}^{\widehat{\mathfrak{A}_\Delta}}_{\mu}$. By induction on the complexity of $\varphi$ it can be proved that $(\widehat{\mu}(\varphi[z/t]))^{\triangleright}= (\widehat{\mu}(\varphi[z/\bar{b}]))^{\triangleright}$. Hence,  $v_\Delta(\widehat{\mu}(\varphi[z/t])) = v_\Delta(\widehat{\mu}(\varphi[z/\bar{b}]))$, by definition of $v_\Delta$. From this, it is obvious that  $v_{\Delta}$ satisfies  conditions (vi.1)-(vi.3).\\[1mm]
(vii) If $\varphi$ and $\varphi'$  are variant, so are  $(\varphi)^\triangleright$ and $(\varphi')^\triangleright$; $(\neg\varphi)^\triangleright$ and $(\neg\varphi')^\triangleright$; and $(\circ\varphi)^\triangleright$ and $(\cons\varphi')^\triangleright$. From this,  $v_\Delta(\varphi) = v_\Delta(\varphi')$, by axiom {\bf (Ax14)}.
\end{proof}

\begin{theorem} {\em (Completeness of \qmbc\  restricted to sentences w.r.t. first-order swap structures)} \label{comp-sent-Qmbc-swap}
Let $\Gamma \cup \{\varphi\} \subseteq$ \sent. If $\Gamma \models_{\qmbc} \varphi$ then $\Gamma \vdash_{\qmbc} \varphi$. 
\end{theorem}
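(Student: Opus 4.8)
The plan is to prove completeness (restricted to sentences) by the standard Henkin-style argument adapted to the swap-structure setting. Suppose $\Gamma \nvdash_{\qmbc} \varphi$ with $\Gamma \cup \{\varphi\} \subseteq Sen(\Theta)$; I want to produce a context $(\mathfrak{A}, \mathcal{M}(\mathcal{B}))$ and a \qmbc-valuation witnessing $\Gamma \not\models_{\qmbc} \varphi$. The overall strategy is: (1) conservatively extend the signature with a denumerable set $C$ of fresh constants, obtaining $\Theta_C$ and the consequence relation $\vdash^C_{\qmbc}$; (2) show that $\Gamma$ can be extended, inside $Sen(\Theta_C)$, to a set $\Delta$ that is simultaneously a $C$-Henkin theory and maximally non-trivial with respect to $\varphi$; (3) form the canonical structure $\mathfrak{A}_\Delta$ over $\mathcal{M}(\mathcal{B}_\Delta)$ (Definition~\ref{str}) and the canonical valuation $v_\Delta$, which by Theorem~\ref{canval} is a genuine \qmbc-valuation; (4) use the key property noted after Definition~\ref{valca}, namely $v_\Delta(\psi) \in D_\Delta$ iff $\Delta \vdash_{\qmbc} (\psi)^\triangleright$, to conclude that $v_\Delta$ designates every sentence of $\Gamma$ but not $\varphi$, so that $\Gamma \not\models_{(\mathfrak{A}_\Delta, \mathcal{M}(\mathcal{B}_\Delta))} \varphi$, hence $\Gamma \not\models_{\qmbc} \varphi$. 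Contraposition gives the theorem.

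For step (2), the construction of $\Delta$ proceeds in the usual interleaved fashion. First one adds Henkin witnesses: enumerate all formulas $\psi(x)$ with at most $x$ free over the expanded signature, and for each pick a fresh constant $c$ from $C$ and throw in $\exists x\psi \to \psi[x/c]$; one checks (as in \cite[Section~7.5]{CC16}) that this extension stays non-trivial with respect to $\varphi$, using that the new constants are fresh and the logic is finitary and structural. Once a $C$-Henkin set $\Gamma' \supseteq \Gamma$ with $\Gamma' \nvdash^C_{\qmbc} \varphi$ is obtained, apply the Lindenbaum-style Proposition~\ref{saturated} (restricted to sentences) to extend $\Gamma'$ to a set $\Delta$ that is maximally non-trivial with respect to $\varphi$. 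One must verify that maximality preserves the $C$-Henkin property — it does, since adding sentences only increases what is derivable, so each instance $\exists x\psi \to \psi[x/c]$ already in $\Gamma'$ remains derivable from $\Delta$.

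The remaining verification in step (4) is essentially bookkeeping. Since $\Delta$ is maximally non-trivial with respect to $\varphi$, for any sentence $\chi$ over $\Theta_C$ we have $\Delta \vdash_{\qmbc} \chi$ or $\Delta, \chi \vdash_{\qmbc} \varphi$; in particular $\Delta \vdash_{\qmbc} \gamma$ for every $\gamma \in \Gamma$ (as $\Gamma \subseteq \Delta$), while $\Delta \nvdash_{\qmbc} \varphi$. Viewing $\Gamma$ and $\varphi$ as sentences over $\Theta_U$ (with $U = CTer(\Theta_C)$) that contain no constants $\bar a$, we have $(\gamma)^\triangleright = \gamma$ and $(\varphi)^\triangleright = \varphi$, so by the remark after Definition~\ref{valca}, $v_\Delta(\gamma) \in D_\Delta$ for all $\gamma \in \Gamma$ and $v_\Delta(\varphi) \notin D_\Delta$. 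By Definition~\ref{vmu} and Remark~\ref{closedterm}, since these are sentences, $(v_\Delta)_\mu$ agrees with $v_\Delta$ on them for every assignment $\mu$; hence $(v_\Delta)_\mu(\gamma) \in D_\Delta$ for all $\gamma, \mu$ but $(v_\Delta)_\mu(\varphi) \notin D_\Delta$ for some (indeed every) $\mu$. This is exactly the failure of $\Gamma \models_{(\mathfrak{A}_\Delta, \mathcal{M}(\mathcal{B}_\Delta))} \varphi$.

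The main obstacle is step (2), specifically guaranteeing that the Henkinization does not destroy non-triviality with respect to $\varphi$: one needs a lemma that adding a single witness axiom $\exists x\psi \to \psi[x/c]$ for a constant $c$ not occurring in $\Delta \cup \{\varphi, \exists x\psi\}$ preserves $\nvdash^C_{\qmbc} \varphi$, which rests on a ``constant elimination'' argument (if $\Delta, \exists x\psi \to \psi[x/c] \vdash \varphi$ then, replacing $c$ by a fresh variable and using $(\exists\text{-In})$ together with the fact that $\exists x\psi \to \exists x\psi$ is provable, one derives $\Delta \vdash \varphi$). This is where the quantifier rules and the DMT (Theorem~\ref{teoded:teo}) must be used carefully, and where one leans on the corresponding development in \cite[Sections~7.5.1--7.5.2]{CC16}. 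The other steps are routine given Theorem~\ref{canval} and Lemma~\ref{quantOK}, which already absorb the genuinely non-deterministic difficulties (the Substitution Lemma and the quantifier clauses).
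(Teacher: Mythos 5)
Your proposal is correct and follows essentially the same route as the paper: Henkinization over a fresh set of constants (which the paper delegates to Theorem~7.5.3 of~\cite{CC16} and you sketch via the standard constant-elimination argument), Lindenbaum extension via Proposition~\ref{saturated} preserving the $C$-Henkin property, and then the canonical structure and canonical valuation of Definitions~\ref{str} and~\ref{valca} together with Theorem~\ref{canval} to refute $\Gamma \models_{\qmbc} \varphi$. The only detail worth adding is the paper's final step of restricting $\mathfrak{A}_\Delta$ and $v_\Delta$ to the original signature $\Theta$, since $\models_{\qmbc}$ quantifies over structures over $\Theta$ rather than $\Theta_C$.
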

\begin{proof}
Let  $\Gamma \cup \{\varphi\} \subseteq$ \sent\ such that $\Gamma \nvdash_{\qmbc} \varphi$. Then, by recalling Definition~\ref{Henkin} and by Theorem~7.5.3 in~\cite{CC16},\footnote{The proof of Theorem~7.5.3 in~\cite{CC16} also holds for the notion of $C$-Henkin theory adopted here in Definition~\ref{Henkin} (which is stronger than the one proposed in~\cite[Definition~7.5.1]{CC16}), as it can be easily verified.} there exists a $C$-Henkin theory $\Delta^{H}$ over $\Theta_{C}$ in \qmbc\ for a nonempty set $C$ of new individual constants such that $\Gamma \subseteq \Delta^{H}$ and, for every $\alpha \in $ \sent:  $\Gamma \vdash_{\qmbc} \alpha$ iff $\Delta^{H} \vdash^{C}_{\qmbc} \alpha$. Hence, $\Delta^{H} \nvdash^{C}_{\qmbc} \varphi$. Now, by Proposition~\ref{saturated}, there exists a set of sentences $\overline{\Delta^{H}}$ in $\Theta_{C}$ extending $\Delta^{H}$ which is maximally non-trivial with respect to $\varphi$  in \qmbc\ (defined over $Sen(\Theta_C)$), such that $\overline{\Delta^{H}}$ is a $C$-Henkin theory over $\Theta_{C}$ in \qmbc. Let $\mathcal{M}(\mathcal{B}_{\overline{\Delta^{H}}})$, $\mathfrak{A}_{\overline{\Delta^{H}}} $ and $v_{\overline{\Delta^{H}}}$ be as in Definitions~\ref{nma},~\ref{str} and~\ref{valca},  respectively. Then,  $v_{\overline{\Delta^{H}}}(\alpha) \in D_{\overline{\Delta^{H}}} \mbox{ iff } \overline{\Delta^{H}} \vdash^{C}_{\qmbc} \alpha$, for every $\alpha$ in $Sen(\Theta_{C})$. From this, $v_{ \overline{\Delta^{H}}}[\Gamma] \subseteq D_{\overline{\Delta^{H}}}$ and $v_{\overline{\Delta^{H}}}(\varphi) \notin D_{\overline{\Delta^{H}}}$. Finally, let $\mathfrak{A}$ and $v$ be the respective restrictions of $\mathfrak{A}_{\overline{\Delta^{H}}} $ and $v_{\overline{\Delta^{H}}}$ to $\Theta$. Then, $\mathfrak{A}$ is a structure over $\mathcal{M}(\mathcal{B}_{\overline{\Delta^{H}}})$, and $v$ is a valuation for \qmbc\ over $\mathfrak{A}$ and $\mathcal{M}(\mathcal{B}_{\overline{\Delta^{H}}})$ such that $v[\Gamma] \subseteq D_{\overline{\Delta^{H}}}$ but $v(\varphi) \notin D_{\overline{\Delta^{H}}}$. This shows that $\Gamma \not\models_{\qmbc} \varphi$.
\end{proof}

\noindent
For any formula $\psi$ in $For(\Theta)$ let $(\forall)\psi$ be the {\em universal closure} of $\psi$, defined as follows: if $\psi$ is a sentence then  $(\forall)\psi \defin \psi$; and if $\psi$ has exactly the variables $x_1,\ldots,x_n$ occurring free then  $(\forall)\psi \defin (\forall x_1)\cdots(\forall x_n)\psi$. Note that $(\forall)\psi \in Sen(\Theta)$. If $\Gamma$ is a set of formulas in $For(\Theta)$ then  $(\forall)\Gamma \defin \{(\forall)\psi  \ : \  \psi \in \Gamma\}$. It is easy to show that,   for every $\Gamma \cup \{\varphi\} \subseteq For(\Theta)$:  (i)~$\Gamma\vdash_{\qmbc} \varphi$ \ iff \   $(\forall)\Gamma\vdash_{\qmbc} (\forall)\varphi$; and (ii)~$\Gamma \models_{\qmbc} \varphi$ \ iff \   $(\forall)\Gamma \models_{\qmbc} (\forall)\varphi$. From this, a general completeness result can be obtained:

\begin{corollary} {\em (Completeness of \qmbc\ w.r.t. first-order swap structures)} \label{comp-QmbC} Let $\Gamma \cup \{\varphi\} \subseteq For(\Theta)$.  If $\Gamma \models_{\qmbc} \varphi$ then $\Gamma\vdash_{\qmbc} \varphi$.
\end{corollary}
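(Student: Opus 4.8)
The plan is to derive the general completeness result (Corollary~\ref{comp-QmbC}) from the sentence-only version (Theorem~\ref{comp-sent-Qmbc-swap}) by reducing the case of arbitrary formulas to the case of sentences via universal closures. The whole argument rests on the two equivalences announced just before the corollary: for every $\Gamma \cup \{\varphi\} \subseteq For(\Theta)$, (i)~$\Gamma\vdash_{\qmbc} \varphi$ iff $(\forall)\Gamma\vdash_{\qmbc} (\forall)\varphi$, and (ii)~$\Gamma \models_{\qmbc} \varphi$ iff $(\forall)\Gamma \models_{\qmbc} (\forall)\varphi$. Granting these, the proof is a three-line chain: assume $\Gamma \models_{\qmbc} \varphi$; by~(ii) we get $(\forall)\Gamma \models_{\qmbc} (\forall)\varphi$; since $(\forall)\Gamma \cup \{(\forall)\varphi\} \subseteq Sen(\Theta)$, Theorem~\ref{comp-sent-Qmbc-swap} yields $(\forall)\Gamma \vdash_{\qmbc} (\forall)\varphi$; and by~(i) we conclude $\Gamma \vdash_{\qmbc} \varphi$.

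So the real content is in establishing (i) and (ii), and I would treat them in that order. For (i), the direction from $\Gamma\vdash_{\qmbc}\varphi$ to $(\forall)\Gamma\vdash_{\qmbc}(\forall)\varphi$ uses the generalization rule derivable in \qmbc: from $\psi$ one derives $(\forall)\psi$ (applying $(\forall\textbf{-In})$ repeatedly, legitimate since the universally closed hypotheses have no free variables, so the eigenvariable condition is met), together with axioms {\bf (Ax13)} to strip closures off the hypotheses when reconstructing a derivation. The converse direction uses {\bf (Ax13)} ($\forall x\psi \to \psi[x/x]$, i.e.\ instantiation back to the free variable) to recover $\varphi$ from $(\forall)\varphi$ and to recover each $\gamma \in \Gamma$ from $(\forall)\gamma$. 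For (ii), the semantic counterpart, one invokes Proposition~\ref{propquant}: $v(\forall x_1\ldots\forall x_n\varphi)\in D$ iff $v_\mu(\varphi)\in D$ for every assignment $\mu$. This translates ``$v_\mu((\forall)\gamma)\in D$ for all $\mu$'' into ``$v_\mu(\gamma)\in D$ for all $\mu$'' (and similarly for $\varphi$), which makes the defining condition of $\Gamma\models_{(\mathfrak{A},\mathcal{M}(\mathcal{B}))}\varphi$ in Definition~\ref{semcons} literally coincide with that of $(\forall)\Gamma\models_{(\mathfrak{A},\mathcal{M}(\mathcal{B}))}(\forall)\varphi$; quantifying over all contexts $(\mathfrak{A},\mathcal{M}(\mathcal{B}))$ then gives~(ii).

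The main obstacle, such as it is, lies in the syntactic equivalence~(i): one must be careful that the applications of the quantifier-introduction rules used to prove generalization respect their side conditions, and that no free variable of a hypothesis gets illegitimately captured when passing between $\Gamma$ and $(\forall)\Gamma$. This is exactly the kind of bookkeeping that the Deduction Meta-Theorem (Theorem~\ref{teoded:teo}) is designed to control, and its hypothesis about eigenvariables is what guarantees the manipulation goes through. Apart from that, everything is routine: (ii) is essentially immediate from Proposition~\ref{propquant}, and the final assembly is a direct substitution into Theorem~\ref{comp-sent-Qmbc-swap}. I would therefore present the corollary's proof simply as the three-line chain above, citing the already-stated equivalences~(i) and~(ii) and Theorem~\ref{comp-sent-Qmbc-swap}, since the excerpt itself has already asserted those equivalences are ``easy to show''.
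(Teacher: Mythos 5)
Your proposal is correct and follows exactly the route the paper takes: reduce to the sentence case of Theorem~\ref{comp-sent-Qmbc-swap} via the universal-closure equivalences (i) and (ii) stated just before the corollary, with (ii) resting on Proposition~\ref{propquant} and (i) on {\bf (Ax13)}, $(\forall\textbf{-In})$ and the eigenvariable bookkeeping. The paper leaves (i) and (ii) as ``easy to show''; your sketch of them is a reasonable filling-in of those details.
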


\section{Completeness of \qmbc\ w.r.t. structures over $\matM_5$} \label{compM5}

Recall from Section~\ref{M5} the 5-valued Nmatrix $\matM_5$ introduced by Avron in~\cite{avr:05}. From the adequacy of \qmbc\ w.r.t. first-order swap structures, and given that \mbc\ can be characterized just with  $\matM_5 = \mathcal{M}\big(\mathcal{B}_{\mA_2}\big)$, it is a natural question to determine if it is possible to extend the proof of~\cite[Theorem~6.4.9 and Corollary~6.4.10]{CC16} (see Theorem~\ref{val-bival-mbC} above) to \qmbc. Namely, taking into account that \qmbc\ can be characterized by standard Tarskian structures expanded with bivaluations which naturally extend the ones for \mbc\ (see Theorem~\ref{comple2Qmbc} below), it seems plausible to extend the technique of  Theorem~\ref{val-bival-mbC} to \qmbc. In Theorem~\ref{adeq-qmbcM5} below it will be shown that this is really the case, hence \qmbc\ can be characterized by first-order structures  over $\matM_5$. Such structures, which were introduced by Avron and Zamansky in~\cite{avr:zam:07} (see Remark~\ref{defquantM5} below), can be considered as  being `classical', as discussed  at the end of Section~\ref{M5}.

Consider a standard Tarskian first-order structure $\mathsf{A} = \langle U,I_{\mathsf{A}} \rangle$ over a first-order signature $\Theta$ (see, for instance, \cite{mendelson}). Observe that $\mathsf{A}$ is defined as in Definition~\ref{stru}, but now any predicate symbol $P$ of arity $n$ is interpreted as a subset  $I_\mathsf{A}(P)$ of $U^n$. The notions of  diagram language $For(\Theta_U)$, extended structure $\widehat{\mathsf{A}}$ and  \sena\ are defined as in Definitions~\ref{diaglan} and~\ref{extA}, and Notation~\ref{SentA} above. 

In~\cite{CC16} the notion of bivaluations for \mbc\  was extended to {\em bivaluations for \qmbc} as follows:

\begin{definition} (Bivaluations for \qmbc, \cite[Definition~7.3.5]{CC16}) \label{bivalqmbc}
Let $\mathsf{A}=\langle U, I_{\mathsf{A}} \rangle$ be a standard Tarskian first-order structure  over $\Theta$, and let $\widehat{\mathsf{A}}=\langle U, I_{\widehat{\mathsf{A}}} \rangle$ be the expansion of $\mathsf{A}$ to $\Theta_U$ by setting $I_{\widehat{\mathsf{A}}}(\bar{a})=a$ for every $a \in U$. A {\em bivaluation\footnote{It was called \qmbc-valuation in~\cite[Definition~7.3.5]{CC16}.} for \qmbc\ over $\mathsf{A}$} is a function $\rho:\sena \to \{0,1\}$ satisfying the clauses of  Definition~\ref{bivalold} above plus the following:\\[2mm]
{\bf (\valpred)} \ $\rho(P(t_1,\ldots,t_n)) = 1$
 \  iff \ 
$\langle \termvalue{t_1}^{\widehat{\mathsf{A}}},\ldots,\termvalue{t_n}^{\widehat{\mathsf{A}}}
\rangle \in I_{\mathsf{A}}(P)$, for
$P(t_1,\ldots,t_n)$ atomic \\[2mm]
{\bf (\valvar)} \ $\rho(\varphi) = \rho(\psi) ~\mbox{whenever $\varphi$
is a variant of $\psi$}$ \\[2mm]
{\bf (\valuni)} \ $\rho(\forall x \varphi)=1$ \  iff \ 
$\rho(\varphi[x/\bar{a}])=1 ~\mbox{for every $a \in U$}$ \\[2mm]
{\bf (\valex)} \ $\rho(\exists x \varphi) = 1$ \  iff \ 
$\rho(\varphi[x/\bar{a}])=1 ~\mbox{for some $a \in U$}$ \\[2mm]
{\bf (\valsubs)} \ if $\mu$ is an assignment, $t$ is a term free for $z$ in $\varphi$ and $b = \termvalue{t}^{\widehat{\mathsf{A}}}_{\mu}$, then: $\rho(\widehat{\mu}(\varphi[z/t])) = \rho(\widehat{\mu}(\varphi[z/\bar{b}])$ implies $\rho(\widehat{\mu}(\#\varphi[z/t])) = \rho(\widehat{\mu}(\#\varphi[z/\bar{b}]))$, for $\# \in \{\neg,\cons\}$.
\end{definition}

\begin{definition}  (\cite[Definitions~7.3.6 and~7.3.12]{CC16}) \label{sem2Qmbc}
An {\em interpretation} for \qmbc\ over a signature $\Theta$ is a pair $\langle \mathsf{A}, \rho\rangle$ such that $\mathsf{A}$ is a Tarskian first-order structure over $\Theta$ and $\rho$ is a bivaluation for \qmbc\ over  $\mathsf{A}$.  The {\em consequence relation $\models_{\qmbc}^2$ of \qmbc\ w.r.t. interpretations} is given by: $\Gamma\models_{\qmbc}^2 \varphi$ if, for every interpretation $\langle \mathsf{A}, \rho\rangle$: $\rho(\widehat{\mu}(\gamma))=1$ for every $\gamma \in \Gamma$ and every assignment $\mu$ implies that $\rho(\widehat{\mu}(\varphi))=1$ for every assignment $\mu$.
\end{definition}

\begin{theorem} {\em (Adequacy of \qmbc\ w.r.t. interpretations, \cite[Theorems~7.4.1. and~7.5.6]{CC16})} \label{comple2Qmbc} If $\Gamma \cup \{\varphi\} \subseteq For(\Theta)$ then:  $\Gamma\vdash_{\qmbc} \varphi$ \ iff \ $\Gamma\models_{\qmbc}^2 \varphi$.\footnote{Rigourously speaking, in~\cite[Theorem~7.5.6]{CC16} it was obtained completeness of \qmbc\ w.r.t. interpretations, but only for sentences. However, completeness of \qmbc\ (in the full language) w.r.t. interpretations follows easily, as we have done here in Corollary~\ref{comp-QmbC}.}
\end{theorem}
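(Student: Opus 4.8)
The plan is to reduce both directions to results that are either already in the excerpt or are routine extensions of the propositional adequacy of \mbc\ w.r.t. bivaluations (Theorem~\ref{comp-bival-mbC}). For soundness ($\Gamma\vdash_{\qmbc}\varphi \Rightarrow \Gamma\models_{\qmbc}^2\varphi$), I would proceed by induction on the length of a derivation of $\varphi$ from $\Gamma$ in \qmbc. First I would check that every axiom instance of \qmbc\ receives value $1$ under any bivaluation $\rho$ for \qmbc\ and any assignment $\mu$: the axioms {\bf (A1)}--{\bf (A11)} follow from the propositional clauses {\bf (\vale)}, {\bf (\valou)}, {\bf (\valimp)}, {\bf (\valnot)}, {\bf (\valbola)} exactly as in the propositional case, while {\bf (Ax12)} and {\bf (Ax13)} follow from {\bf (\valuni)}, {\bf (\valex)} together with {\bf (\valsubs)} (which plays the role of the Substitution Lemma at the bivaluation level), and {\bf (Ax14)} follows from {\bf (\valvar)}. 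Then I would verify that the rules preserve validity: {\bf (MP)} is immediate from {\bf (\valimp)}, and the quantifier rules ($\exists${\bf -In}) and ($\forall${\bf -In}) are handled using {\bf (\valex)}, {\bf (\valuni)} and the side condition on free variables, checking denotations over all assignments as required by the formulation of $\models_{\qmbc}^2$.

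For completeness ($\Gamma\models_{\qmbc}^2\varphi \Rightarrow \Gamma\vdash_{\qmbc}\varphi$), I would argue contrapositively and restrict first to sentences, since by the universal-closure argument used right after Corollary~\ref{comp-QmbC} one has $\Gamma\vdash_{\qmbc}\varphi$ iff $(\forall)\Gamma\vdash_{\qmbc}(\forall)\varphi$ and similarly for $\models_{\qmbc}^2$, so the full result follows once the sentence case is settled. Assume $\Gamma\nvdash_{\qmbc}\varphi$ with $\Gamma\cup\{\varphi\}\subseteq$~\sent. Exactly as in the proof of Theorem~\ref{comp-sent-Qmbc-swap}, I would invoke Theorem~7.5.3 of~\cite{CC16} to obtain a $C$-Henkin theory $\Delta^H$ over $\Theta_C$ with $\Gamma\subseteq\Delta^H$ and $\Gamma\vdash_{\qmbc}\alpha$ iff $\Delta^H\vdash^C_{\qmbc}\alpha$, hence $\Delta^H\nvdash^C_{\qmbc}\varphi$; then by Proposition~\ref{saturated} I would extend $\Delta^H$ to a set $\overline{\Delta^H}$ over $\Theta_C$ that is maximally non-trivial w.r.t.\ $\varphi$ and still $C$-Henkin. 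The canonical model is now a \emph{standard} Tarskian structure: take $U=$~\ctert\ (over $\Theta_C$), interpret constants and function symbols syntactically as in Definition~\ref{str}, and set $I_{\mathsf{A}}(P)=\{\langle t_1,\dots,t_n\rangle : \overline{\Delta^H}\vdash^C_{\qmbc}P(t_1,\dots,t_n)\}$. Define $\rho(\psi)=1$ iff $\overline{\Delta^H}\vdash^C_{\qmbc}(\psi)^\triangleright$ for $\psi\in$~\sena. One then checks that $\rho$ is a bivaluation for \qmbc\ over this structure: {\bf (\valpred)} is immediate from the interpretation of $P$; {\bf (\vale)}, {\bf (\valou)}, {\bf (\valimp)}, {\bf (\valnot)}, {\bf (\valbola)} follow from maximal non-triviality plus the \mbc-theorems on $\wedge,\vee,\to,\neg,\circ$ (as in Theorem~\ref{comp-bival-mbC}); {\bf (\valuni)} and {\bf (\valex)} follow from the Henkin property together with {\bf (Ax12)}/{\bf (Ax13)} and the derived fact recalled in Remark~\ref{univ-Henk}; {\bf (\valvar)} comes from {\bf (Ax14)}; and {\bf (\valsubs)} follows because $(\widehat{\mu}(\psi[z/t]))^\triangleright=(\widehat{\mu}(\psi[z/\bar b]))^\triangleright$ when $b=\termvalue{t}^{\widehat{\mathsf{A}}}_\mu$, just as in the verification of clause~(vi) in Theorem~\ref{canval}. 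Finally, restricting $\mathsf{A}$ and $\rho$ back to $\Theta$ gives an interpretation with $\rho(\widehat{\mu}(\gamma))=1$ for all $\gamma\in\Gamma$ and all $\mu$, but $\rho(\varphi)\neq 1$, witnessing $\Gamma\nmodels_{\qmbc}^2\varphi$.

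The main obstacle is establishing clause {\bf (\valsubs)} (and, relatedly, that the Henkin property transfers correctly through the syntactic canonical model), which is the bivaluation-level analogue of the Substitution Lemma; this is precisely the point where the quantified case is more delicate than the propositional one, and it requires the careful induction on formula complexity using that the translation $(\cdot)^\triangleright$ commutes with the relevant substitutions. Everything else is a bookkeeping adaptation of the propositional proof of Theorem~\ref{comp-bival-mbC} together with the Henkin-style Lindenbaum argument already used for Theorem~\ref{comp-sent-Qmbc-swap}. Since the statement is attributed to~\cite[Theorems~7.4.1 and~7.5.6]{CC16}, in the actual write-up I would simply cite those results (and the footnote's remark that full-language completeness follows from the sentence case by universal closure), but the sketch above is the route I would take to reconstruct them.
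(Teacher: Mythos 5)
Your proposal is correct and matches the paper's treatment: the paper gives no proof here, simply citing \cite[Theorems~7.4.1 and~7.5.6]{CC16} and noting in the footnote that the full-language version follows from the sentence case by universal closure, exactly as you conclude. Your reconstruction of the underlying argument (soundness by induction on derivations using the bivaluation clauses with (\valsubs) supplying the Substitution Lemma, completeness by the Henkin--Lindenbaum construction over the closed-term domain) is the standard one and runs parallel to the paper's own swap-structure proofs in Theorems~\ref{canval} and~\ref{comp-sent-Qmbc-swap}.
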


\noindent Now, Theorem~\ref{val-bival-mbC} will be extended to \qmbc\ (see Theorem~\ref{val-bival-qmbC} below). Previous to this, it is worth observing the following:

\begin{remark} \label{defquantM5} Consider once again the characteristic 5-valued Nmatrix  $\matM_5 = \mathcal{M}\big(\mathcal{B}_{\mA_2}\big)$ for \mbc, and let $\mathfrak{A}$ be a structure over $\Theta$ and $\matM_5$. If $v$ is a valuation for \qmbc\ over $\mathfrak{A}$ and $\matM_5$, it is easy to see that clauses~(iv) and~(v) of Definition~\ref{val} are equivalent to the following:  for every $Q\in \{\forall, \exists\}$, $v(Qx\varphi) \in \tilde{Q}\big(\{v(\varphi[x/\bar{a}]) \ : \ a \in U\}\big)$ where $\tilde{Q}: (\mathcal{P}(B_{\mA_2})-\{\emptyset\}) \to (\mathcal{P}(B_{\mA_2})-\{\emptyset\})$ is
$$\tilde{\forall}(X) =  \begin{cases} \textrm{D}, \mbox{ if }  X\subseteq \textrm{D}\\[1mm] 
\textrm{ND}, \mbox{ otherwise}   \end{cases} \ \mbox{ and } \ \ \  \tilde{\exists}(X) =  \begin{cases} \textrm{D}, \mbox{ if }  X\cap \textrm{D} \neq \emptyset \\[1mm] 
\textrm{ND}, \mbox{ otherwise}   \end{cases}   $$
and
$B_{\mA_2} = |\matM_5| = \big\{T, \, t, \, t_0, \, F, \, f_0\big\}$,  $\textrm{D}=\{T, \, t, \, t_0\}$  and $\textrm{ND}=\{F, \, f_0\}$  (recall Section~\ref{M5}). It is not hard to prove that the notions of structures over $\Theta$ and $\matM_5$, and valuations over them, coincide with the corresponding notions introduced in~\cite{avr:zam:07}. Thus, the present framework generalizes, from $\mA_2$ to arbitrary complete Boolean algebras, the semantical framework proposed in~\cite{avr:zam:07}.
\end{remark}

\begin{theorem}  \label{val-bival-qmbC}
Let  $\mathcal{I}=\langle \mathsf{A}, \rho\rangle$ be an interpretation for \qmbc\ over a signature $\Theta$. Then, it induces a first-order structure $\mathfrak{A}_\mathcal{I}$ over $\matM_5$ and $\Theta$, and a \qmbc-valuation $v^\rho$ over  $\mathfrak{A}_\mathcal{I}$ and $\matM_5$  given by $v^\rho(\alpha) \defin (\rho(\alpha),\rho(\neg\alpha),\rho(\cons\alpha))$ such that: $\rho(\alpha)=1$ iff $v^\rho(\alpha) \in {\rm D}$, for every sentence $\alpha \in \sena$.
\end{theorem}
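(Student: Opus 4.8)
The strategy is to define $\mathfrak{A}_\mathcal{I}$ directly from the Tarskian structure $\mathsf{A}$, then verify that the proposed $v^\rho$ lands in the swap structure and satisfies all clauses of Definition~\ref{val}. First I would set $\mathfrak{A}_\mathcal{I} = \langle U, I_{\mathfrak{A}_\mathcal{I}}\rangle$ with the same domain $U$ as $\mathsf{A}$, the same interpretation of individual constants and function symbols, and with each $n$-ary predicate $P$ interpreted by $P^{\mathfrak{A}_\mathcal{I}}(a_1,\ldots,a_n) = T$ if $\langle a_1,\ldots,a_n\rangle \in I_\mathsf{A}(P)$ and $= f_0$ otherwise (any fixed choice of one designated and one non-designated snapshot would do; this one is convenient because it makes clause~(i) match clause~(\valpred)). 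Since $\termvalue{t}^{\widehat{\mathfrak{A}_\mathcal{I}}} = \termvalue{t}^{\widehat{\mathsf{A}}}$ for every closed term $t$ (the term algebra part is untouched), this choice gives $v^\rho(P(t_1,\ldots,t_n)) = P^{\mathfrak{A}_\mathcal{I}}(\termvalue{t_1}^{\widehat{\mathfrak{A}_\mathcal{I}}},\ldots)$, which is clause~(i) of Definition~\ref{val}.

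Next I would check that $v^\rho(\alpha) \in B_{\mA_2} = |\matM_5|$ for every sentence $\alpha$ of the diagram language: by (\valnot) either $\rho(\alpha)=1$ or $\rho(\neg\alpha)=1$, so $\rho(\alpha)\vee\rho(\neg\alpha)=1$; and by (\valbola), $\rho(\cons\alpha)=1$ implies $\rho(\alpha)=0$ or $\rho(\neg\alpha)=0$, so $\rho(\alpha)\wedge\rho(\neg\alpha)\wedge\rho(\cons\alpha)=0$. Hence $v^\rho(\alpha)$ satisfies the two defining conditions of $B_{\mA_2}$. The equivalence ``$\rho(\alpha)=1$ iff $v^\rho(\alpha)\in{\rm D}$'' is then immediate from the definitions of $v^\rho$ and of ${\rm D}=\{T,t,t_0\}$, since all three of those snapshots have first coordinate $1$ and the two non-designated ones $F,f_0$ have first coordinate $0$. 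After that I would run through clauses~(ii)--(iii): for a binary $\#\in\{\wedge,\vee,\to\}$, one compares the first coordinate $\rho(\alpha\#\beta)$ — computed via (\vale)/(\valou)/(\valimp) — with $\rho(\alpha)\#\rho(\beta)$ in $\mA_2$, observing these agree, and then checks that the resulting triple $(\rho(\alpha\#\beta),\rho(\neg(\alpha\#\beta)),\rho(\cons(\alpha\#\beta)))$ belongs to $v^\rho(\alpha)\,\tilde{\#}\,v^\rho(\beta)$, which for the \emph{full} swap structure $\mathcal{B}_{\mA_2}$ means only that its first coordinate is $\rho(\alpha)\#\rho(\beta)$ — so it holds automatically. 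Clauses~(ii) for $\neg,\cons$ are similar and even easier. For the quantifier clauses~(iv)--(v), I would use (\valuni)/(\valex): $\rho(\forall x\varphi)=\bigwedge_{a\in U}\rho(\varphi[x/\bar a])$ in $\mA_2=\{0,1\}$, which is exactly the first-coordinate requirement in clause~(iv), and dually for $\exists$; again membership in the full swap structure only constrains the first coordinate.

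The two clauses that take genuine care are~(vi) (the Substitution-Lemma clauses) and~(vii) (variants). Clause~(vii) follows directly from (\valvar): if $\varphi,\varphi'$ are variants, so are $\neg\varphi,\neg\varphi'$ and $\cons\varphi,\cons\varphi'$, hence all three coordinates of $v^\rho(\varphi)$ and $v^\rho(\varphi')$ agree. For clause~(vi), the key observation is that (\valsubs) gives exactly the hypothesis-to-conclusion implication for $\#\in\{\neg,\cons\}$ at the level of $\rho$, and I would lift it to $v^\rho$: if $v^\rho(\widehat{\mu}(\varphi[z/t])) = v^\rho(\widehat{\mu}(\varphi[z/\bar b]))$ then in particular the first coordinates agree, i.e.\ $\rho(\widehat{\mu}(\varphi[z/t])) = \rho(\widehat{\mu}(\varphi[z/\bar b]))$, and moreover — reading off the second and third coordinates — $\rho(\neg\widehat{\mu}(\varphi[z/t])) = \rho(\neg\widehat{\mu}(\varphi[z/\bar b]))$ and likewise for $\cons$; applying (\valsubs) to $\varphi$, to $\neg\varphi$, and to $\cons\varphi$ then yields (vi.1). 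For (vi.2) one argues the same way componentwise using the already-established clause~(iii)-type behaviour together with (\valsubs) applied to $\varphi\#\psi$, $\neg(\varphi\#\psi)$, $\cons(\varphi\#\psi)$; for (vi.3), since $\rho$ satisfies (\valuni)/(\valex) and the hypothesis holds for every $a\in U$, the values $\rho(\widehat{\mu}((Qx\varphi)[z/t]))$ and $\rho(\widehat{\mu}((Qx\varphi)[z/\bar b]))$ coincide, and the same for $\neg$ and $\cons$ of these formulas by a further appeal to (\valsubs), giving the required equality of triples. The main obstacle I anticipate is purely bookkeeping in clause~(vi): one must keep straight the interplay between the syntactic substitutions $\widehat{\mu}(\cdot[z/t])$ versus $\widehat{\mu}(\cdot[z/\bar b])$ and the fact that (\valsubs) is stated only for the ``outermost'' connective step, so that recovering all three coordinates of $v^\rho$ requires invoking (\valsubs) separately for the formula and for its negation and consistency-expansion; no single deep idea is needed, but the indexing must be handled carefully so that the induction underlying (\valsubs) in Definition~\ref{bivalqmbc} is applied to the right formulas.
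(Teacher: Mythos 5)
There is a genuine gap in your construction of the induced structure $\mathfrak{A}_\mathcal{I}$. Clause~(i) of Definition~\ref{val} is an \emph{equality of snapshots}: it demands $v^\rho(P(t_1,\ldots,t_n)) = P^{\mathfrak{A}_\mathcal{I}}(\termvalue{t_1}^{\widehat{\mathfrak{A}_\mathcal{I}}},\ldots,\termvalue{t_n}^{\widehat{\mathfrak{A}_\mathcal{I}}})$, where the left-hand side is the triple $(\rho(P(t_1,\ldots,t_n)),\rho(\neg P(t_1,\ldots,t_n)),\rho(\cons P(t_1,\ldots,t_n)))$. Your interpretation forces the right-hand side to be $T=(1,0,1)$ or $f_0=(0,1,0)$, but the second and third coordinates of the left-hand side are \emph{not} determined by whether the tuple of denotations belongs to $I_\mathsf{A}(P)$: clauses (\valnot) and (\valbola) are only one-way implications, so a bivaluation may well have $\rho(P(t_1,\ldots,t_n))=\rho(\neg P(t_1,\ldots,t_n))=1$ and $\rho(\cons P(t_1,\ldots,t_n))=0$, giving $v^\rho(P(t_1,\ldots,t_n)) = t = (1,1,0) \neq T$. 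So with your structure, $v^\rho$ simply fails clause~(i) and is not a \qmbc-valuation; the parenthetical claim that ``any fixed choice of one designated and one non-designated snapshot would do'' is where the argument breaks. The remedy is the paper's definition: let the bivaluation itself dictate the predicate interpretation, $I_{\mathfrak{A}_\mathcal{I}}(P)(a_1,\ldots,a_n) \defin v^\rho(P(\bar a_1,\ldots,\bar a_n))$.

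With that correction the rest of your verification goes through and is essentially the paper's argument (the paper delegates clauses~(ii)--(iii) to Theorem~\ref{val-bival-mbC} and handles (iv)--(vii) much as you do). One residual bookkeeping point remains even after the fix: clause~(i) for arbitrary closed terms $t_1,\ldots,t_n$, as opposed to the constants $\bar a_i$, requires $\rho(\# P(t_1,\ldots,t_n)) = \rho(\# P(\overline{\termvalue{t_1}^{\widehat{\mathsf{A}}}},\ldots,\overline{\termvalue{t_n}^{\widehat{\mathsf{A}}}}))$ also for $\#\in\{\neg,\cons\}$, which does not follow from (\valpred) alone but from (\valsubs) applied to the atomic formula, one argument position at a time. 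Your handling of clause~(vi) --- reading the hypothesis off all three coordinates and then invoking (\valsubs) separately for $\varphi$, $\neg\varphi$ and $\cons\varphi$ (and for $Qx\varphi$ in (vi.3)) --- is correct and in fact spells out what the paper leaves implicit.
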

\begin{proof} Given $\mathcal{I}=\langle \mathsf{A}, \rho\rangle$ consider the first-order structure $\mathfrak{A}_\mathcal{I}$ over $\matM_5$ and $\Theta$ obtained from $\mathsf{A}$ by taking the same domain $U$; $I_{\mathfrak{A}_\mathcal{I}}$ coincides with $I_\mathsf{A}$ for every individual constant and function symbol; and  $I_{\mathfrak{A}_\mathcal{I}}(P): U^n \to |\matM_5|$ is given by $I_{\mathfrak{A}_\mathcal{I}}(P)(a_1,\ldots,a_n)= v^\rho(P(\bar{a}_1,\ldots,\bar{a}_n))$ for every predicate symbol $P$ of arity $n$,
where $v^\rho:\sena \to |\matM_5|$ is defined by $v^\rho(\alpha) = (\rho(\alpha),\rho(\neg\alpha),\rho(\cons\alpha))$, for every  $\alpha \in \sena$. Clearly $\rho(\alpha)=1$ iff $v^\rho(\alpha) \in {\rm D}$, for every  $\alpha \in \sena$. Thus, it remains to prove that   $v^\rho$ is indeed a \qmbc-valuation over  $\mathfrak{A}_\mathcal{I}$ and $\matM_5$. It is clear that clauses~(i)-(iii) of Definition~\ref{val} are satisfied, since $\termvalue{t}^{\widehat{\mathfrak{A}_\mathcal{I}}}=\termvalue{t}^{\widehat{\mathsf{A}}}$ for every closed term $t$, and by  Theorem~\ref{val-bival-mbC}. With respect to clause~(iv), suppose
that $\{v^\rho(\varphi[x/\bar{a}]) \ : \ a \in U\} \subseteq {\rm D}$. Then $\rho(\varphi[x/\bar{a}])=1$ for every $a \in U$ and so $\rho(\forall x \varphi)=1$, by (\valuni). Hence $v^\rho(\forall x\varphi) \in {\rm D}$. This means that $v^\rho(\forall x\varphi) \in
 \tilde{\forall}\big(\{v^\rho(\varphi[x/\bar{a}]) \ : \ a \in U\}\big)$. If $\{v^\rho(\varphi[x/\bar{a}]) \ : \ a \in U\} \not\subseteq {\rm D}$ then, by a similar reasoning, it is shown that, once again, $v^\rho(\forall x\varphi) \in
 \tilde{\forall}\big(\{v^\rho(\varphi[x/\bar{a}]) \ : \ a \in U\}\big)$. Analogously it can be proven that $v^\rho$ satisfies clause~(v).  Clause~(vi) is satisfied by $v^\rho$, since $\termvalue{t}^{\widehat{\mathfrak{A}_\mathcal{I}}}_{\mu}=\termvalue{t}^{\widehat{\mathsf{A}}}_{\mu}$ for every term $t$, and by the fact that $\rho$ satisfies the Substitution Lemma: $\rho(\widehat{\mu}(\varphi[z/t])) = \rho(\widehat{\mu}(\varphi[z/\bar{b}]))$ for $b = \termvalue{t}^{\widehat{\mathsf{A}}}_{\mu}$. Clause~(vii) is also satisfied, since $\rho$ satisfies (\valvar). This concludes the proof.
\end{proof}

\begin{theorem} {\em (Adequacy of \qmbc\ w.r.t. first-order structures over $\matM_5$)} \label{adeq-qmbcM5}
For every set $\Gamma \cup \{\varphi\} \subseteq For(\Theta)$:  $\Gamma\vdash_{\qmbc} \varphi$ \ iff \ $\Gamma\models_{(\mathfrak{A}, \matM_5)} \varphi$ for every  structure $\mathfrak{A}$ over $\Theta$ and $\matM_5$.
\end{theorem}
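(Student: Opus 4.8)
The plan is to obtain Theorem~\ref{adeq-qmbcM5} purely by assembling results already in hand: the soundness of \qmbc\ with respect to first-order swap structures (Theorem~\ref{sound-Qmbc-swap}), the adequacy of \qmbc\ with respect to interpretations in the full language (Theorem~\ref{comple2Qmbc}), and the transfer between bivaluation-interpretations and structures over $\matM_5$ furnished by Theorem~\ref{val-bival-qmbC}. No fresh induction is required; the argument is a chain of equivalences, so the proof is short.

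For the soundness half, the key observation is that $\mA_2$ is finite and hence a complete Boolean algebra, so $\matM_5 = \mathcal{M}(\mathcal{B}_{\mA_2})$ is a legitimate semantic context of the form $\mathcal{M}(\mathcal{B})$ admitted in Definition~\ref{consrel}. Thus, if $\Gamma\vdash_{\qmbc}\varphi$ then Theorem~\ref{sound-Qmbc-swap} gives $\Gamma\models_{\qmbc}\varphi$, and in particular $\Gamma\models_{(\mathfrak{A},\matM_5)}\varphi$ for every structure $\mathfrak{A}$ over $\Theta$ and $\matM_5$.

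For completeness I would argue contrapositively. Assume $\Gamma\nvdash_{\qmbc}\varphi$. By Theorem~\ref{comple2Qmbc}, $\Gamma\not\models^2_{\qmbc}\varphi$, so there is an interpretation $\mathcal{I}=\langle\mathsf{A},\rho\rangle$ for \qmbc\ over $\Theta$ with $\rho(\widehat{\mu}(\gamma))=1$ for every $\gamma\in\Gamma$ and every assignment $\mu$, while $\rho(\widehat{\mu_0}(\varphi))\neq 1$ for some assignment $\mu_0$. Feeding $\mathcal{I}$ into Theorem~\ref{val-bival-qmbC} produces a first-order structure $\mathfrak{A}_\mathcal{I}$ over $\matM_5$ and $\Theta$ and a \qmbc-valuation $v^\rho$ over $\mathfrak{A}_\mathcal{I}$ and $\matM_5$ with $\rho(\alpha)=1$ iff $v^\rho(\alpha)\in{\rm D}$ for every $\alpha\in\sena$. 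Since $v^\rho_\mu(\psi)=v^\rho(\widehat{\mu}(\psi))$ by Definition~\ref{vmu}, this yields $v^\rho_\mu(\gamma)\in{\rm D}$ for every $\gamma\in\Gamma$ and every $\mu$, but $v^\rho_{\mu_0}(\varphi)\notin{\rm D}$; by Definition~\ref{semcons} this means $\Gamma\not\models_{(\mathfrak{A}_\mathcal{I},\matM_5)}\varphi$, so $\varphi$ is not a semantical consequence of $\Gamma$ over every structure over $\matM_5$, as required.

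The main obstacle is already discharged: essentially all the real work lies in Theorem~\ref{val-bival-qmbC} (verifying that the induced $v^\rho$ genuinely satisfies clauses (iv)--(vi) of Definition~\ref{val}, i.e. the quantifier clauses and the Substitution clause), which we may quote. What remains is purely bookkeeping — lining up the ``for every assignment'' quantifiers occurring in the interpretation semantics (Definition~\ref{sem2Qmbc}) with those in the swap-structure semantics (Definition~\ref{semcons}) under the correspondence $\rho\leftrightarrow v^\rho$, and noting once more that $\mA_2$ is complete so that $\matM_5$ is an admissible context. One could optionally also recall the universal-closure reduction used at the end of Section~\ref{complete} to handle formulas with free variables, but since Theorem~\ref{comple2Qmbc} is already stated for the full language, this is subsumed.
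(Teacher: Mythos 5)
Your proposal is correct and follows essentially the same route as the paper: soundness from Theorem~\ref{sound-Qmbc-swap} (noting $\mA_2$ is complete), and completeness by combining Theorem~\ref{comple2Qmbc} with the transfer $\mathcal{I}\mapsto(\mathfrak{A}_\mathcal{I},v^\rho)$ of Theorem~\ref{val-bival-qmbC}. The only difference is that you phrase the completeness direction contrapositively while the paper argues directly, which is an immaterial presentational choice.
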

\begin{proof} \ \\
`Only if' part (Soundness):  It is a consequence of Theorem~\ref{sound-Qmbc-swap}.\\[1mm]
`If' part (Completeness): Suppose that $\Gamma\models_{(\mathfrak{A}, \matM_5)} \varphi$ for every  structure $\mathfrak{A}$ over $\Theta$ and $\matM_5$. Let $\mathcal{I}=\langle \mathsf{A}, \rho\rangle$ be an interpretation for \qmbc\ over $\Theta$ such that $\rho(\widehat{\mu}(\gamma))=1$ for every $\gamma \in \Gamma$ and every assignment $\mu$. Let $\mathfrak{A}_\mathcal{I}$ and $v^\rho$ be as in Theorem~\ref{val-bival-qmbC}. Then $v^\rho_{\mu}(\gamma)\in {\rm D}$,  for every formula $\gamma \in \Gamma$ and every assignment $\mu$. By hypothesis,  $v^\rho_{\mu}(\varphi)\in {\rm D}$,  for every assignment $\mu$. This implies that  $\rho(\widehat{\mu}(\varphi))=1$, for every assignment $\mu$. That is, $\Gamma\models_{\qmbc}^2 \varphi$. Therefore $\Gamma\vdash_{\qmbc} \varphi$, by Theorem~\ref{comple2Qmbc}.
\end{proof}

\noindent
Taking into account Remark~\ref{defquantM5}, the last result  is  a restatement of the adequacy for \qmbc\ w.r.t. first-order structures over $\matM_5$ obtained in~\cite[Theorem~24]{avr:zam:07}.

\section{Adding standard equality to \qmbc} \label{secEq}

In this section a binary predicate $ \approx$ for dealing with equality will be considered. As expected, this predicate will be always interpreted as the standard identity. This means that the predicate $\approx$ will be seen, from a semantical point of view, as
a logical symbol. The  resulting logic will be called $\qmbc^\approx$. The definition of \qmbceq\ will follows closely~\cite[Section~7.7]{CC16}. 

\begin{definition}
Let $\Theta$ be a first-order signature. The induced signature with equality $\Theta_\approx$ is obtained from $\Theta$ by adding a new binary predicate symbol $ \approx$.
\end{definition}

\noindent The expression $(t_1 \approx t_2)$ will stands for the atomic formula $\approx(t_1,t_2)$.
If $\varphi$ is a formula and $y$ is a variable free for
the variable $x$ in $\varphi$,  $\varphi[x\wr y]$ denotes any
formula obtained from $\varphi$ by replacing some, but not
necessarily all (maybe none), free occurrences of $x$ by $y$.

\begin{definition} (\cite[Definition 7.7.1]{CC16}) \label{DefQmbCEq} Let $\Theta_\approx$ be a first-order signature with equality. The logic \qmbceq\ (over $\Theta_\approx$) is the
extension  of \qmbc\ over $For(\Theta_\approx)$ obtained by adding to
\qmbc, besides all the new instances of axioms and inference
rules involving the equality predicate $\approx$, the following
axiom schemas:\\[2mm]
$\begin{array}{ll}
{\bf (AxEq1)} & \forall x(x \approx x)\\[2mm]
{\bf (AxEq2)} & (x \approx y) \rightarrow (\varphi \rightarrow \varphi[x \wr y]) \mbox{, if $y$ is a variable free
for $x$ in $\varphi$}
\end{array} $
\end{definition}

\noindent Notice that the axioms for equality are the same considered for classical logic (see, for instance, \cite{mendelson}).  
Given that \qmbceq\ is an axiomatic extension of \qmbc, it satisfies the deduction meta-theorem DMT (recall Theorem~\ref{teoded:teo}).
Let $\vdash_{\qmbceq}$ be the consequence relation of the Hilbert calculus \qmbceq. The semantics of first-order swap structures for \qmbc\ can be easily  extended to the equality predicate.

\begin{definition} \label{strueq} Let ${\cal M}({\cal B})=(\mathcal{B},D)$ be a non-deterministic matrix defined by a swap structure $\mathcal{B}$ for {\bf mbC}, and let $\Theta_\approx$ be a first-order signature with equality. A  (first-order) {\em structure with  standard equality} over ${\cal M}({\cal B})$ and $\Theta_\approx$ is a structure over ${\cal M}({\cal B})$ and $\Theta_\approx$ such that
$I_\mathfrak{A}(\approx)(a,b) \in D \ \mbox{ iff } \ a=b$.
\end{definition}

\noindent In what follows, $(a \approx^\mathfrak{A}b)$ will stands for $I_\mathfrak{A}(\approx)(a,b)$, for every structure $\mathfrak{A}$ and any $a,b \in U$. Given a structure $\mathfrak{A}$, the signature obtained from $\Theta$ by adding a  new individual constant for each element of $U$  (recall Definition~\ref{diaglan}) will be denoted by $\Theta^ \approx_U$. The set of formulas and sentences over  $\Theta^ \approx_U$ will be denoted by $For(\Theta^ \approx_U)$ and $Sen(\Theta^ \approx_U)$, respectively.

If $\mathfrak{A}$ is a structure with standard equality over  ${\cal M}({\cal B})$ and $v$ is a \qmbc-valuation over $\mathfrak{A} $ and  ${\cal M}({\cal B})$ then, by Definition~\ref{val}~(i) it follows that, for every closed terms $t_1$ and $t_2$, $v(t_1 \approx t_2) \in D \ \mbox{ iff } \ \termvalue{t_1}^\mathfrak{A} = \termvalue{t_2}^\mathfrak{A}$.
This guarantees the validity of  axiom~({\bf AxEq1}). However, in order to validate axiom~({\bf AxEq2}), the valuations must be additionally restricted:

\begin{definition} (\qmbceq-valuations)~\label{valeq}
Let  $\mathfrak{A} $ be a structure with standard equality over $\Theta_\approx$ and ${\cal M}({\cal B})$. A {\em valuation} for \qmbceq\ (or a \qmbceq-{\em valuation}) over $\mathfrak{A} $ and  ${\cal M}({\cal B})$ is a \qmbc-valuation $v:Sen(\Theta^ \approx_U) \to |{\cal B}|$ which satisfies, in addition, the following clause, for every $\mu$:\\[2mm]
($viii$) $v_\mu((x \approx y) \rightarrow (\varphi \rightarrow \varphi[x \wr y])) \in D$, if $y$ is a variable free for $x$ in $\varphi$.
\end{definition}

\noindent The notion of  $\varphi$ being a {\em $\approx$-semantical consequence of $\Gamma$ over $(\mathfrak{A}, {\cal M}({\cal B}))$}, denoted by $\Gamma\models^\approx_{(\mathfrak{A}, {\cal M}({\cal B}))}\varphi$, is as stated in Definition~\ref{semcons}, but now restricted to structures with standard equality and \qmbceq-valuations over them. Thus, $\varphi$ is  a {\em semantical consequence of $\Gamma$ in \qmbceq\ w.r.t. first-order swap structures}, denoted by $\Gamma\models_{\qmbceq}\varphi$, if $\Gamma\models^\approx_{(\mathfrak{A}, {\cal M}({\cal B}))}\varphi$ for every of such pairs $(\mathfrak{A}, {\cal M}({\cal B}))$.

Observe that the Substitution Lemma still holds for \qmbceq, since  it holds for any structure and any \qmbc-valuation. From this, and from Definition~\ref{valeq}, the following result can be easily derived by adapting the proof of Theorem~\ref{sound-Qmbc-swap}: 

\begin{theorem} {\em (Soundness of \qmbceq\ w.r.t. first-order swap structures  with standard equality)} \label{sound-Qmbceq-swap}
For every set $\Gamma \cup\{ \varphi\}  \subseteq For(\Theta_\approx)$:  if $\Gamma \vdash_{\qmbceq} \varphi$, then  $\Gamma \models_{\qmbceq} \varphi$.
\end{theorem}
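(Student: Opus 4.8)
The plan is to mimic closely the soundness proof for \qmbc\ (Theorem~\ref{sound-Qmbc-swap}), which in turn reduces to Corollary~\ref{psound}: it suffices to show that every instance of a \qmbceq\ axiom schema evaluates into $D$ under every \qmbceq-valuation and every assignment, and that the inference rules preserve this property. Since \qmbceq\ is obtained from \qmbc\ by adding the axiom schemas {\bf (AxEq1)} and {\bf (AxEq2)} (together with the new instances of the \qmbc\ axioms and rules over the enlarged signature $\Theta_\approx$), and since a \qmbceq-valuation is by Definition~\ref{valeq} in particular a \qmbc-valuation, all the \qmbc\ axioms and the rules {\bf (MP)}, {\em($\exists${\bf -In})}, {\em($\forall${\bf -In})} are already handled by Corollary~\ref{psound} verbatim (the Substitution Lemma, Theorem~\ref{substlem}, and Proposition~\ref{paxioms} hold for \emph{any} structure and \emph{any} \qmbc-valuation, hence for structures with standard equality and \qmbceq-valuations over them). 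So the only genuinely new work is verifying the two equality axioms.

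For {\bf (AxEq2)}, there is nothing to prove: clause~(viii) of Definition~\ref{valeq} states exactly that $v_\mu\big((x\approx y)\to(\varphi\to\varphi[x\wr y])\big)\in D$ for every $\mu$ whenever $y$ is free for $x$ in $\varphi$, so every instance of {\bf (AxEq2)} is valid by fiat. For {\bf (AxEq1)}, i.e.\ $\forall x(x\approx x)$, I would argue as follows. Fix a \qmbceq-valuation $v$ over a structure $\mathfrak{A}$ with standard equality, and an assignment $\mu$. For each $a\in U$ we have $\widehat{\mu}\big((x\approx x)[x/\bar a]\big) = (\bar a\approx\bar a)$, and by clause~(i) of Definition~\ref{val}, $v(\bar a\approx\bar a) = I_{\mathfrak{A}}(\approx)(a,a)$, which lies in $D$ because $a=a$ and $\mathfrak{A}$ has standard equality (Definition~\ref{strueq}). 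Hence $\pi_1\big(v(\widehat\mu((x\approx x)[x/\bar a]))\big)=1$ for every $a\in U$, so by clause~(iv) of Definition~\ref{val} the first coordinate of $v_\mu(\forall x(x\approx x))$ equals $\bigwedge_{a\in U}1 = 1$, i.e.\ $v_\mu(\forall x(x\approx x))\in D$. (Using Proposition~\ref{propquant} one can phrase this even more directly: $v(\forall x(x\approx x))\in D$ iff $v_{\mu'}(x\approx x)\in D$ for all $\mu'$, which holds since $v(\bar a\approx\bar a)\in D$ for all $a$.)

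With the two new axioms dispatched, I would then state the analogue of Corollary~\ref{psound} for \qmbceq\ --- noting that its parts (1)--(4) go through unchanged, the only addition being that instances of {\bf (AxEq1)} and {\bf (AxEq2)} are now included among the axiom schemas covered by part~(1) --- and conclude soundness by the usual induction on the length of a derivation of $\varphi$ from $\Gamma$ in \qmbceq, exactly as in the proof of Theorem~\ref{sound-Qmbc-swap}: atoms of the derivation are either elements of $\Gamma$ (hence in $D$ under every $v_\mu$ by hypothesis on $\Gamma$) or axiom instances (handled by the corollary), and the inductive step uses parts~(2)--(4) for the three inference rules. There is essentially no obstacle here; the whole point of folding clause~(viii) into the definition of \qmbceq-valuation was to make the soundness of {\bf (AxEq2)} immediate. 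If anything, the only point deserving a word of care is that clause~(viii) is stated schematically over all $\varphi\in For(\Theta^\approx_U)$, so one must make sure the instance of {\bf (AxEq2)} appearing in the derivation --- whose $\varphi$ may involve the equality predicate and constants from $\Theta_\approx$ --- is indeed covered; it is, since $v$ is defined on $Sen(\Theta^\approx_U)$ and clause~(viii) quantifies over exactly those $\varphi$.
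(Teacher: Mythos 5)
Your proposal is correct and follows essentially the same route as the paper: the paper likewise observes that the Substitution Lemma and the \qmbc\ machinery carry over unchanged, that {\bf (AxEq1)} is validated by clause~(i) of Definition~\ref{val} together with the standard-equality condition of Definition~\ref{strueq}, and that {\bf (AxEq2)} is validated by fiat via clause~(viii) of Definition~\ref{valeq}, after which soundness follows by adapting the proof of Theorem~\ref{sound-Qmbc-swap}. Your write-up simply makes explicit the details the paper leaves to the reader.
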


\noindent In order to prove completeness of \qmbceq\ w.r.t. swap structures semantics, the proof given in Section~\ref{complete} will be adapted, in accordance with the argument given in~\cite[Section~7.7]{CC16}.

We begin by observing that the notion of $C$-Henkin theory in \qmbceq\ can be defined by adapting Definition~\ref{Henkin} in an obvious way. The signature obtained from $\Theta_\approx$ by adding a set $C$ of new individual constants will be denoted by $\Theta_C^\approx$, and the consequence relation in \qmbceq\ over that signature will be denoted by $\vdash^{C}_{\qmbceq}$. Clearly, Proposition~\ref{saturated} also holds for \qmbceq.  This result, combined with ~\cite[Theorem~7.5.3]{CC16} (which can also be easily adapted to \qmbceq) produces the following: 

\begin{proposition} \label{saturatedeq}
Let  $\Gamma \cup \{\varphi\} \subseteq Sen(\Theta_\approx)$  such that $\Gamma \nvdash_{\qmbceq} \varphi$. Then, there exists a set of sentences $\Delta \subseteq Sen(\Theta_C^\approx)$, for some set $C$ of new individual constants, such that $\Gamma \subseteq \Delta$, it is a $C$-Henkin theory in \qmbceq, and it is maximally non-trivial with respect to $\varphi$ in \qmbceq\ (by restricting $\vdash^C_{\qmbceq}$ to sentences in $Sen(\Theta_C^\approx)$).
\end{proposition}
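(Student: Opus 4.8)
The plan is to mimic the two-step argument used for \qmbc\ in Section~\ref{complete}, now carried out over the signature with equality $\Theta_\approx$, and then to check that the Henkin property survives the Lindenbaum step.

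\emph{Step 1 (Henkin extension).} Fix a denumerable set $C$ of new individual constants not occurring in $\Theta_\approx$, and work over $\Theta^\approx_C$. Enumerate all formulas $\varphi_0,\varphi_1,\ldots$ of $For(\Theta^\approx_C)$ having at most one free variable. Define an increasing chain $\Gamma = \Delta_0 \subseteq \Delta_1 \subseteq \cdots$ by $\Delta_{n+1} = \Delta_n \cup \{\exists x_n \varphi_n \to \varphi_n[x_n/c_n]\}$, where $x_n$ is the free variable of $\varphi_n$ (if any) and $c_n \in C$ is a constant occurring neither in $\Delta_n$ nor in $\varphi_n$; such a $c_n$ exists because only finitely many constants of $C$ have been used at each stage. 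Put $\Delta^H = \bigcup_n \Delta_n$. By construction $\Delta^H$ is a $C$-Henkin theory in \qmbceq\ in the sense of the equality analogue of Definition~\ref{Henkin}. The point requiring proof is conservativity: for every $\alpha \in Sen(\Theta_\approx)$, $\Gamma \vdash_{\qmbceq} \alpha$ iff $\Delta^H \vdash^C_{\qmbceq} \alpha$; in particular $\Delta^H \nvdash^C_{\qmbceq} \varphi$. This is the equality version of \cite[Theorem~7.5.3]{CC16}, whose proof adapts \emph{verbatim}: by finitariness it suffices to eliminate one Henkin axiom at a time, and removing a single witness $c_n$ amounts, via the Deduction Meta-Theorem (Theorem~\ref{teoded:teo}) together with the fact that $c_n$ behaves as a fresh parameter (a ``constants lemma'' for \qmbc, which transfers to \qmbceq\ since the axioms {\bf (AxEq1)} and {\bf (AxEq2)} are syntactically the classical ones and contain no occurrence of $c_n$), to substituting $c_n$ by a fresh variable and applying the existential introduction rule.

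\emph{Step 2 (Lindenbaum).} Since $\Delta^H \nvdash^C_{\qmbceq} \varphi$ and \qmbceq\ is a Tarskian, finitary logic, Proposition~\ref{saturated} — which, being a purely abstract consequence of finitariness, holds for \qmbceq\ exactly as stated for \qmbc\ — yields a set $\overline{\Delta^H} \subseteq Sen(\Theta^\approx_C)$ with $\Delta^H \subseteq \overline{\Delta^H}$ that is maximally non-trivial with respect to $\varphi$ in \qmbceq\ (with $\vdash^C_{\qmbceq}$ restricted to sentences of $Sen(\Theta^\approx_C)$).

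\emph{Step 3 (Henkin is preserved).} For every formula $\psi$ with at most one free variable $x$, Step~1 gives a constant $c \in C$ with $\Delta^H \vdash^C_{\qmbceq} \exists x\psi \to \psi[x/c]$; by monotonicity of $\vdash^C_{\qmbceq}$ and $\Delta^H \subseteq \overline{\Delta^H}$, also $\overline{\Delta^H} \vdash^C_{\qmbceq} \exists x\psi \to \psi[x/c]$. Hence $\overline{\Delta^H}$ is a $C$-Henkin theory, is maximally non-trivial w.r.t. $\varphi$, and contains $\Gamma$ (since $\Gamma \subseteq \Delta^H \subseteq \overline{\Delta^H} =: \Delta$), which is what we wanted. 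The main obstacle is the conservativity claim in Step~1: one must ensure that the successive addition of Henkin witnesses never enables a derivation of $\varphi$, and here some care is needed because the DMT for \qmbc\ carries an eigenvariable restriction on the quantifier rules; this is harmless, however, because the Henkin axioms are sentences and because the witness constant is eliminated by replacing it with a genuinely new variable, so the side conditions are automatically satisfied. Everything else — monotonicity, finitariness, and the treatment of equality — is routine.
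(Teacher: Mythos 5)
Your proposal is correct and follows essentially the same route as the paper, which obtains the result by combining the adaptation to \qmbceq\ of the Henkin-extension theorem (Theorem~7.5.3 of \cite{CC16}, your Step~1) with the Lindenbaum-style Proposition~\ref{saturated} (your Step~2), the preservation of the Henkin property under supersets being immediate by monotonicity as in your Step~3. The paper leaves these ingredients as citations; you have merely filled in the standard details, correctly flagging the only delicate points (freshness of witnesses and the eigenvariable side condition of the DMT, which is vacuous since Henkin axioms are sentences).
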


\begin{definition} Let $\Delta \subseteq Sen(\Theta_\approx)$ be non-trivial in \qmbceq, that is: there is some sentence $\varphi$ in $Sen(\Theta_\approx)$ such that $\Delta \nvdash_{\qmbceq} \varphi$. Let ${\equiv_{\Delta}^\approx} \subseteq Sen(\Theta_\approx)^{2}$ be the relation in $Sen(\Theta_\approx)$ defined as follows:  $\alpha \equiv_{\Delta}^\approx \beta$ iff $\Delta \vdash_{\qmbceq} \alpha \leftrightarrow\beta$. 
\end{definition}

\noindent
Then $\equiv_{\Delta}^\approx$ is an equivalence relation which induces a Boolean algebra  $\mathcal{A}_{\Delta}^\approx$ whose domain is the quotient set  $A_{\Delta}^\approx \defin Sen(\Theta_\approx)/_{\equiv_{\Delta}^\approx}$ such that the operations are defined  as follows (here $[\alpha]_\Delta^\approx$ denotes the equivalence class of $\alpha$ w.r.t. $\equiv_{\Delta}^\approx$): $[\alpha]_\Delta^\approx \,\bar\#\, [\beta]_\Delta^\approx \defin [\alpha \# \beta]_\Delta^\approx$ for any $\# \in \{\wedge, \vee, \to\}$; $0_{\Delta}^\approx \defin [\varphi \wedge \neg \varphi \wedge \circ \varphi]_{\Delta}^\approx$, and  $1_{\Delta}^\approx \defin [\varphi \vee \neg \varphi]_{\Delta}^\approx$ (for any sentence $\varphi$).

\begin{definition}\label{nmaeq}
 Let $\mathcal{A}_{\Delta}^\approx$ be a Boolean algebra defined as above, and let $C\mA_\Delta^\approx$ be its completion with monomorphism $\ast$ (recall Section~\ref{complete}). The full swap structure for \mbc\ over $C\mathcal{A}_{\Delta}^\approx$ will be denoted by  $\mathcal{B}_{\Delta}^\approx$, and the associated Nmatrix will be denoted by $\mathcal{M}(\mathcal{B}_{\Delta}^\approx) \defin (\mathcal{B}_{\Delta}^\approx, D_{\Delta}^\approx)$. 
\end{definition}

\begin{remark}\label{rem1eq}
Note that $(\ast([\alpha]_{\Delta}^\approx), \ast([\beta]_{\Delta}^\approx), \ast([\gamma]_{\Delta}^\approx)) \in D_{\Delta}^\approx \ \mbox{ iff } \ \Delta \vdash_{\qmbceq} \alpha$.
\end{remark}

\begin{definition}  (Canonical Structure in \qmbceq) \label{streq}
Let $\Delta \subseteq Sen(\Theta_C^\approx)$ be a non-trivial and $C$-Henkin theory in \qmbceq.  Define in the set $C$ of constants the following relation:
$c \simeq d$ iff $\Delta \vdash_{\qmbceq}^{C} (c \approx d)$. By the axioms of equality it follows that $\simeq$ is an equivalence relation. For any $c \in C$ let $\widetilde{c} =
\{d \in C \ : \ c \simeq d\}$ be the equivalence class of $c$ w.r.t. $\simeq$, and let $U =
\{\widetilde{c} \ : \ c \in C\}$ be the corresponding quotient set. Let $\mathcal{M}(\mathcal{B}_{\Delta}^\approx)$ be as in Definition~\ref{nmaeq}. The {\em canonical structure induced by $\Delta$ in \qmbceq} is the  structure  $\mathfrak{A}_\Delta^\approx = \langle U, I_{{\mathfrak{A}_\Delta^\approx}} \rangle$
over  $\Theta_C^\approx$ and $\mathcal{M}(\mathcal{B}_{\Delta}^\approx)$ defined as follows:
\begin{itemize}
\item[-] if $c$ is an individual constant in $\Theta_C^\approx$ then $I_{{\mathfrak{A}_\Delta^\approx}}(c)=
  \widetilde{d}$, where $d \in C$ is such that $\Delta
  \vdash_{\qmbceq}^{C} (c\approx d)$;
  \item[-] if $f$ is a function symbol,  $I_{{\mathfrak{A}_\Delta^\approx}}(f):U^n \to U$ is given by $I_{{\mathfrak{A}_\Delta^\approx}}(f)(\widetilde{c}_1,\ldots,\widetilde{c}_n)= \widetilde{c}$, where $c \in C$ is such that $\Delta
  \vdash_{\qmbceq}^{C} (f(c_1,\ldots,c_n)\approx c)$;
  \item[-] if $P$ is a predicate symbol, then $I_{{\mathfrak{A}_\Delta^\approx}}(P)$ is given by
 \item[]$I_{{\mathfrak{A}_\Delta^\approx}}(P)({\widetilde{c}_1},\ldots,{\widetilde{c}_n}) = (\ast([P(c_1,\ldots,c_n)]_{\Delta}^\approx), \ast([\neg P(c_1,\ldots,c_n)]_{\Delta}^\approx), \ast([\cons P(c_1,\ldots,c_n)]_{\Delta}^\approx))$.
\end{itemize}
\end{definition}

\noindent 
The proof that $I_{{\mathfrak{A}_\Delta^\approx}}$ is well-defined for individual constants and function symbols is similar to that for classical logic (see~\cite{chan:keis}). Let $P$ be predicate symbol of arity $n$ and let  $({\widetilde{c}_1},\ldots,{\widetilde{c}_n}) \in U^n$. Let $d_1,\ldots,d_n \in C$ such that $c_i \simeq d_i$ for every $1 \leq i \leq n$. Then $\Delta \vdash_{\qmbceq}^{C} (c_i \approx d_i)$ for $1 \leq i \leq n$. It is immediate that $\Delta\vdash_{\qmbceq}^{C} \left(\bigwedge_{i=1}^n(c_i \approx d_i)\right) \to (P(c_1,\ldots,c_n) \leftrightarrow P(d_1,\ldots,d_n))$, hence $\Delta\vdash_{\qmbceq}^{C} (P(c_1,\ldots,c_n) \leftrightarrow P(d_1,\ldots,d_n))$. Analogously it can be proven that $\Delta\vdash_{\qmbceq}^{C} (\neg P(c_1,\ldots,c_n) \leftrightarrow \neg P(d_1,\ldots,d_n))$ and $\Delta\vdash_{\qmbceq}^{C} (\cons P(c_1,\ldots,c_n) \leftrightarrow \cons P(d_1,\ldots,d_n))$. This shows that $I_{{\mathfrak{A}_\Delta^\approx}}(P)$ is well-defined. Moreover, by similar considerations to the ones given after Definition~\ref{str}, it follows that $I_{{\mathfrak{A}_\Delta^\approx}}(P)({\widetilde{c}_1},\ldots,{\widetilde{c}_n})$ belongs to $|\mathcal{B}_{\Delta}^\approx|$ for every $({\widetilde{c}_1},\ldots,{\widetilde{c}_n}) \in U^n$.

\begin{proposition} \label{is-stru}
Let $\Delta \subseteq Sen(\Theta_C^\approx)$ be a non-trivial and $C$-Henkin theory in \qmbceq\ and let $\mathcal{M}(\mathcal{B}_{\Delta}^\approx)$ be as in Definition~\ref{nmaeq}. Then the canonical structure  $\mathfrak{A}_\Delta^\approx$ induced by $\Delta$ in \qmbceq is a  structure  with standard equality
over  $\Theta_C^\approx$ and $\mathcal{M}(\mathcal{B}_{\Delta}^\approx)$. 
\end{proposition}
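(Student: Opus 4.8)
The plan is to verify that the canonical structure $\mathfrak{A}_\Delta^\approx$ from Definition~\ref{streq} satisfies the two conditions of Definition~\ref{strueq}: first, that it is indeed a structure over $\mathcal{M}(\mathcal{B}_{\Delta}^\approx)$ and $\Theta_C^\approx$ (which has already been checked in the paragraph preceding the statement, where it is shown that $I_{\mathfrak{A}_\Delta^\approx}$ is well-defined and that $I_{\mathfrak{A}_\Delta^\approx}(P)(\widetilde{c}_1,\ldots,\widetilde{c}_n) \in |\mathcal{B}_\Delta^\approx|$), and second, the key new requirement that $(\,\widetilde{c} \approx^{\mathfrak{A}_\Delta^\approx} \widetilde{d}\,) \in D_\Delta^\approx$ if and only if $\widetilde{c} = \widetilde{d}$, for all $\widetilde{c}, \widetilde{d} \in U$.

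For the main point, first I would unwind the definitions. Since $\approx$ is itself a predicate symbol of $\Theta_C^\approx$, the interpretation clause for predicates in Definition~\ref{streq} gives $I_{\mathfrak{A}_\Delta^\approx}(\approx)(\widetilde{c},\widetilde{d}) = (\ast([c \approx d]_\Delta^\approx),\, \ast([\neg(c\approx d)]_\Delta^\approx),\, \ast([\cons(c\approx d)]_\Delta^\approx))$. By Remark~\ref{rem1eq}, this triple lies in $D_\Delta^\approx$ exactly when $\Delta \vdash_{\qmbceq}^C (c \approx d)$. So the condition to be proved reduces to the equivalence: $\Delta \vdash_{\qmbceq}^C (c \approx d)$ iff $\widetilde{c} = \widetilde{d}$. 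But this is precisely the definition of the equivalence relation $\simeq$ on $C$ used to build the quotient set $U$ in Definition~\ref{streq}: $c \simeq d$ iff $\Delta \vdash_{\qmbceq}^C (c\approx d)$, and $\widetilde{c} = \widetilde{d}$ iff $c \simeq d$. Hence the equivalence holds essentially by construction.

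One point deserving care is that an arbitrary element $\widetilde{c}$ of $U$ is an equivalence class, and one must check that the displayed computation does not depend on the choice of representatives $c, d$ — but this is exactly the well-definedness of $I_{\mathfrak{A}_\Delta^\approx}(P)$ for $P = {\approx}$, which was established in the paragraph before the proposition using axioms~({\bf AxEq1}) and~({\bf AxEq2}) (congruence of $\approx$ with respect to itself). A second subtlety: one should confirm that every element of $U$ has a representative that is a constant in $C$ (true by definition of $U$), so that the reduction to the relation $\simeq$ on $C$ covers all pairs in $U^2$. No real obstacle is expected here; the statement is largely a bookkeeping consequence of how $U$ and $I_{\mathfrak{A}_\Delta^\approx}$ were defined, the only genuine work being the invocation of the equality axioms to justify representative-independence, which has already been done. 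I would therefore keep the proof short: cite the well-definedness argument above, apply Remark~\ref{rem1eq}, and observe that the resulting condition is the defining property of $\simeq$.
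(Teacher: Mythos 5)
Your proof is correct and follows essentially the same route as the paper's: it cites the well-definedness argument given just before the proposition, unfolds $I_{\mathfrak{A}_\Delta^\approx}(\approx)(\widetilde{c},\widetilde{d})$ via the predicate clause of Definition~\ref{streq}, applies Remark~\ref{rem1eq} to reduce membership in $D_\Delta^\approx$ to $\Delta \vdash_{\qmbceq}^{C} (c \approx d)$, and identifies this with $c \simeq d$, i.e.\ $\widetilde{c} = \widetilde{d}$. No discrepancies to report.
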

\begin{proof}
As it was shown above, the mapping  $I_{{\mathfrak{A}_\Delta^\approx}}$ is well-defined, and $I_{{\mathfrak{A}_\Delta^\approx}}(P)$ is a function from $U^n$ to $|\mathcal{B}_{\Delta}^\approx|$. Let $\widetilde{c}_1,\widetilde{c}_2 \in U$. Then $(\widetilde{c}_1 \approx^{\mathfrak{A}_\Delta^\approx}{\widetilde{c}_2}) \in D_{\Delta}^\approx$ iff $\Delta\vdash_{\qmbceq}^{C} (c_1\approx c_2)$, by Definition~\ref{streq} and by Remark~\ref{rem1eq}, iff $c_1 \simeq c_2$ iff  $\widetilde{c}_1 = {\widetilde{c}_2}$. This shows that $\mathfrak{A}_\Delta^\approx$ is indeed a  structure  with standard equality.
\end{proof}

\begin{definition} \label{translaeq}
Let $(\cdot)^\triangleleft:(Ter((\Theta_C^\approx)_U) \cup For((\Theta_C^\approx)_U)) \to (Ter(\Theta_C^\approx) \cup For(\Theta_C^\approx))$ 
be the mapping recursively defined  as in Definition~\ref{transla}, but with the following difference: $\left(\,\overline{\widetilde{c}}\,\right)^\triangleleft = d$ for some $d \in \widetilde{c}$ previously chosen, for every $\widetilde{c} \in U$.
\end{definition}

\noindent
Observe that, if $s \in Ter((\Theta_C^\approx)_U) \cup For((\Theta_C^\approx)_U)$, then $(s)^\triangleleft$ is the expression in $Ter(\Theta_C^\approx) \cup For(\Theta_C^\approx)$ obtained from $s$ by substituting every occurrence of a constant $\overline{\widetilde{c}}$ by a constant  $d \in \widetilde{c}$. Suppose that $(\cdot)^{\triangleleft'}:(Ter((\Theta_C^\approx)_U) \cup For((\Theta_C^\approx)_U)) \to (Ter(\Theta_C^\approx) \cup For(\Theta_C^\approx))$ is defined as $(\cdot)^\triangleleft$, but now $\left(\,\overline{\widetilde{c}}\,\right)^{\triangleleft'} = d'$  for another choice of $d' \in \widetilde{c}$ (possibly different to $d$), for every $\widetilde{c} \in U$. Then, it is easy to prove that $\Delta\vdash_{\qmbceq}^{C} (\psi)^\triangleleft \leftrightarrow (\psi)^{\triangleleft'}$ for every sentence $\psi$. This shows that the choice of each $d  \in \widetilde{c}$ in order to define $\left(\,\overline{\widetilde{c}}\,\right)^\triangleleft$, for every $\widetilde{c} \in U$, is irrelevant.

\begin{proposition} \label{valcaeq}
Let $\Delta \subseteq Sen(\Theta_C^\approx)$ be a set of sentences over the signature $\Theta_C^\approx$ such  that $\Delta$ is a $C$-Henkin theory in \qmbceq\ which is also maximally non-trivial with respect to $\varphi$ in \qmbceq, for some sentence $\varphi$. Then, the  canonical \qmbc-valuation induced by $\Delta$ over $\mathfrak{A}_{\Delta}^\approx$ and $\mathcal{M}(\mathcal{B}_{\Delta}^\approx)$ (see Definition~\ref{valca}) is a \qmbceq-valuation, which will be denoted by $v_{\Delta}^\approx$, such that $v_{\Delta}^\approx(\psi) \in D_{\Delta}^\approx \mbox{ iff } \Delta \vdash_{\qmbceq}^{C} (\psi)^\triangleleft$.
\end{proposition}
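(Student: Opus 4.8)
The plan is to verify that the canonical valuation $v_\Delta^\approx$, which is already known to be a \qmbc-valuation by the analogue of Theorem~\ref{canval} (transferred to the equality setting via the translation $(\cdot)^\triangleleft$ in place of $(\cdot)^\triangleright$, as discussed after Definition~\ref{translaeq}), satisfies in addition clause~$(viii)$ of Definition~\ref{valeq}. The proof of the first assertion --- that $v_\Delta^\approx$ is a \qmbc-valuation --- is a routine adaptation of the proof of Theorem~\ref{canval}: one replaces $(\cdot)^\triangleright$ by $(\cdot)^\triangleleft$ throughout, uses Remark~\ref{rem1eq} instead of the remark after Definition~\ref{nma}, and uses the equality-version of Lemma~\ref{quantOK} (whose proof goes through verbatim since it only uses the $C$-Henkin property and the \mbc\ axioms, both still available in \qmbceq). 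The key new point is clause~$(viii)$.

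First I would record the basic identity that $v_\Delta^\approx(\psi) \in D_\Delta^\approx$ iff $\Delta \vdash_{\qmbceq}^{C} (\psi)^\triangleleft$; this follows from the definition of $v_\Delta^\approx$ together with Remark~\ref{rem1eq}, exactly as in the remark following Definition~\ref{valca}. Then, to check clause~$(viii)$, fix an assignment $\mu$, a formula $\varphi$, and a variable $y$ free for $x$ in $\varphi$, and set $\psi \defin (x \approx y) \rightarrow (\varphi \rightarrow \varphi[x \wr y])$. I need $v_\Delta^\approx{}_\mu(\psi) = v_\Delta^\approx(\widehat{\mu}(\psi)) \in D_\Delta^\approx$, which by the identity just mentioned amounts to $\Delta \vdash_{\qmbceq}^{C} (\widehat{\mu}(\psi))^\triangleleft$. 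Now $(\widehat{\mu}(\psi))^\triangleleft$ has the form $(t_1 \approx t_2) \rightarrow (\varphi' \rightarrow \varphi'')$ where $t_1, t_2$ are closed terms of $\Theta_C^\approx$ and $\varphi'', \varphi'$ are obtained by substituting closed terms for the free variables; the point is that the substitution used to build $\varphi[x \wr y]$ commutes appropriately with $\widehat{\mu}$ and $(\cdot)^\triangleleft$, so that $\varphi''$ is still of the form ``$\varphi'$ with some occurrences of $t_1$ replaced by $t_2$'', i.e.\ an instance of the $[x \wr y]$-pattern after the variable-to-constant substitution. Hence $(\widehat{\mu}(\psi))^\triangleleft$ is a substitution instance of axiom~{\bf (AxEq2)} (using closed terms in place of $x, y$, which is legitimate since closed terms are always free for a variable), and therefore $\Delta \vdash_{\qmbceq}^{C} (\widehat{\mu}(\psi))^\triangleleft$.

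The main obstacle --- and the step deserving the most care --- is the syntactic bookkeeping in the previous paragraph: one must check that applying $\widehat{\mu}$ (which substitutes the constants $\overline{\mu(z)}$ for the free variables $z$) and then $(\cdot)^\triangleleft$ (which replaces each such constant $\overline{\widetilde{c}}$ by a chosen representative $d \in \widetilde{c}$) to the formula $(x \approx y) \rightarrow (\varphi \rightarrow \varphi[x \wr y])$ yields genuinely an instance of {\bf (AxEq2)}, rather than something merely provably equivalent to one. The delicate points are: that $x$ and $y$ may themselves be among the free variables handled by $\mu$, or may be captured by quantifiers inside $\varphi$ (but the hypothesis ``$y$ free for $x$ in $\varphi$'' and the fact that $\widehat{\mu}$ only touches free occurrences keep this under control); and that the positions of $x$ replaced by $y$ in $\varphi[x \wr y]$ are preserved under the outer substitutions. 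Once this is settled one concludes, as in Proposition~\ref{is-stru} and the surrounding discussion, that $v_\Delta^\approx$ meets every requirement of Definition~\ref{valeq}, so it is a \qmbceq-valuation; the displayed characterization $v_\Delta^\approx(\psi) \in D_\Delta^\approx$ iff $\Delta \vdash_{\qmbceq}^{C} (\psi)^\triangleleft$ was already established at the outset, completing the proof.
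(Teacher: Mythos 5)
Your proof is correct and follows essentially the same route as the paper's: establish the identity $v_{\Delta}^\approx(\psi) \in D_{\Delta}^\approx$ iff $\Delta \vdash_{\qmbceq}^{C} (\psi)^\triangleleft$ from the definitions, and then verify clause~(viii) by observing that $(\widehat{\mu}(\alpha))^\triangleleft$ is a theorem of \qmbceq\ (the closed-term form of {\bf (AxEq2)}, obtained by generalization and instantiation, since the schema as stated uses variables) and hence derivable from the deductively closed theory $\Delta$. The paper is terser and omits the syntactic bookkeeping you rightly flag as the delicate step, but the substance is identical.
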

\begin{proof} Observe that, by the very definitions,  $v_{\Delta}^\approx(\psi) \in D_{\Delta}^\approx \mbox{ iff } \Delta \vdash_{\qmbceq}^{C} (\psi)^\triangleleft$ (and, as observed above, `$v_{\Delta}^\approx(\psi) \in D_{\Delta}^\approx$' does not depend on the choices made by $(\cdot)^\triangleleft$).
Hence, it suffices to prove that $v_{\Delta}^\approx$ satisfies clause~(viii) of Definition~\ref{valeq}. Thus, let $\alpha=(x \approx y) \rightarrow (\psi \rightarrow \psi[x \wr y])$ (where  $y$ is a variable free
for $x$ in $\varphi$)  be an instance of  axiom~({\bf AxEq2}), and let $\mu$ be an  assignment. Given that $\Delta$ is a closed theory in \qmbceq\ over $\Theta_C^\approx$, it follows that $\Delta \vdash_{\qmbceq}^{C}\ (\widehat{\mu}(\alpha))^\triangleleft$. Then $v_{\Delta}^\approx(\widehat{\mu}(\alpha)) \in  D_{\Delta}^\approx$, showing that $v_{\Delta}^\approx$ satisfies clause~(viii).
\end{proof}

\begin{theorem} {\em (Completeness  of \qmbceq\ restricted to  sentences w.r.t. first-order swap structures with standard equality)} \label{comp-sent-Qmbceq-swap}
Let $\Gamma \cup \{\varphi\} \subseteq Sen(\Theta_\approx)$. If $\Gamma \models_{\qmbceq} \varphi$ then $\Gamma \vdash_{\qmbceq} \varphi$. 
\end{theorem}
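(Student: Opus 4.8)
The plan is to mimic the completeness argument for \qmbc\ restricted to sentences (Theorem~\ref{comp-sent-Qmbc-swap}), now routing everything through the equality-aware canonical constructions of this section. Assume $\Gamma \nvdash_{\qmbceq} \varphi$, with $\Gamma \cup \{\varphi\} \subseteq Sen(\Theta_\approx)$. First I would apply the Henkin-extension machinery: by (the \qmbceq-analogue of) Theorem~7.5.3 in~\cite{CC16} there is a $C$-Henkin theory $\Delta^{H}$ over $\Theta_C^\approx$ in \qmbceq, for a nonempty set $C$ of fresh constants, such that $\Gamma \subseteq \Delta^{H}$ and $\Gamma \vdash_{\qmbceq} \alpha$ iff $\Delta^{H} \vdash^{C}_{\qmbceq} \alpha$ for every sentence $\alpha$ over $\Theta_\approx$; in particular $\Delta^{H} \nvdash^{C}_{\qmbceq} \varphi$. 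Then by Proposition~\ref{saturatedeq} (more precisely, the Lindenbaum-style saturation behind it) I would extend $\Delta^{H}$ to a set $\Delta$ of sentences over $\Theta_C^\approx$ that is simultaneously $C$-Henkin in \qmbceq\ and maximally non-trivial with respect to $\varphi$ in \qmbceq.

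Next I would invoke the canonical objects already set up for \qmbceq: the Boolean algebra $\mathcal{A}_\Delta^\approx$ and its completion $C\mathcal{A}_\Delta^\approx$ with monomorphism $\ast$, the full swap structure $\mathcal{B}_\Delta^\approx$ and associated Nmatrix $\mathcal{M}(\mathcal{B}_\Delta^\approx)=(\mathcal{B}_\Delta^\approx,D_\Delta^\approx)$ (Definition~\ref{nmaeq}), the canonical structure $\mathfrak{A}_\Delta^\approx$ — which, by Proposition~\ref{is-stru}, is a structure \emph{with standard equality} over $\Theta_C^\approx$ and $\mathcal{M}(\mathcal{B}_\Delta^\approx)$ — and the canonical valuation $v_\Delta^\approx$, which by Proposition~\ref{valcaeq} is a genuine \qmbceq-valuation (clause~(viii) holds because $\Delta$ is closed and contains every instance of~({\bf AxEq2})). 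The key identity is the one recorded in Proposition~\ref{valcaeq} together with Remark~\ref{rem1eq}: $v_\Delta^\approx(\psi) \in D_\Delta^\approx$ iff $\Delta \vdash^{C}_{\qmbceq} (\psi)^\triangleleft$, for every sentence $\psi$ over $(\Theta_C^\approx)_U$; and on sentences over $\Theta_C^\approx$ itself this reads $v_\Delta^\approx(\psi) \in D_\Delta^\approx$ iff $\Delta \vdash^{C}_{\qmbceq}\psi$. From this, since $\Gamma \subseteq \Delta^{H} \subseteq \Delta$ we get $v_\Delta^\approx[\Gamma] \subseteq D_\Delta^\approx$, while $\Delta \nvdash^{C}_{\qmbceq}\varphi$ gives $v_\Delta^\approx(\varphi) \notin D_\Delta^\approx$.

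Finally I would restrict the signature back to $\Theta_\approx$: let $\mathfrak{A}$ and $v$ be the restrictions of $\mathfrak{A}_\Delta^\approx$ and $v_\Delta^\approx$ to $\Theta_\approx$. Then $\mathfrak{A}$ is still a structure with standard equality over $\mathcal{M}(\mathcal{B}_\Delta^\approx)$, and $v$ is still a \qmbceq-valuation over it (the defining clauses, including~(viii), are inherited, since dropping the constants of $C$ only shrinks the stock of sentences to be checked). We have $v[\Gamma] \subseteq D_\Delta^\approx$ and $v(\varphi) \notin D_\Delta^\approx$, which witnesses $\Gamma \not\models_{\qmbceq}\varphi$; contraposing yields the theorem. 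For the general (non-sentence) statement one passes to universal closures exactly as in Corollary~\ref{comp-QmbC}, using that $\Gamma\vdash_{\qmbceq}\varphi$ iff $(\forall)\Gamma\vdash_{\qmbceq}(\forall)\varphi$ and likewise for $\models_{\qmbceq}$.

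The bulk of the work is bookkeeping, and the main obstacle is verifying that the \qmbceq-specific additions do not break anything: one must check (i) that the $C$-Henkin construction and the saturation lemma genuinely go through for \qmbceq\ in its \emph{stronger} Henkin form (this is the footnote-level claim attached to Theorem~7.5.3 in~\cite{CC16}, plus Proposition~\ref{saturatedeq}), (ii) that $\mathfrak{A}_\Delta^\approx$ really has standard equality — i.e.\ the quotient of $C$ by $\simeq$ interacts correctly with $\ast$ and $D_\Delta^\approx$, which is Proposition~\ref{is-stru} and already handled — and (iii) that the translation $(\cdot)^\triangleleft$, which now must make an arbitrary choice of representative from each class $\widetilde{c}$, yields a well-defined membership relation in $D_\Delta^\approx$; this is the content of the remark following Definition~\ref{translaeq} and of Proposition~\ref{valcaeq}, and it is exactly the place where the equality axioms are used to show independence of the choices. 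Once these points are in hand, the completeness argument is the same abstract Henkin/Lindenbaum pattern as for \qmbc.
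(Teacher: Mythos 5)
Your proposal is correct and follows essentially the same route as the paper's proof: Lindenbaum/Henkin saturation via Proposition~\ref{saturatedeq}, the canonical structure $\mathfrak{A}_\Delta^\approx$ and valuation $v_\Delta^\approx$ from Definitions~\ref{nmaeq}--\ref{streq} and Proposition~\ref{valcaeq}, the observation that $(\alpha)^\triangleleft=\alpha$ on $Sen(\Theta_C^\approx)$, and restriction back to $\Theta_\approx$ to obtain the counter-model. The only cosmetic difference is that you unpack the Henkin-extension step (obtaining $\Delta^{H}$ first and then saturating) which the paper has already folded into Proposition~\ref{saturatedeq}.
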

\begin{proof}
Let  $\Gamma \cup \{\varphi\} \subseteq  Sen(\Theta_\approx)$ such that $\Gamma \nvdash_{\qmbceq} \varphi$. By Proposition~\ref{saturatedeq}, there exists a set of sentences $\Delta \subseteq Sen(\Theta_C^\approx)$, for some set $C$ of new individual constants, such that $\Gamma \subseteq \Delta$, $\Delta$ is a $C$-Henkin theory in \qmbceq, and it is maximally non-trivial with respect to $\varphi$ in \qmbceq\ (by restricting $\vdash^C_{\qmbceq}$ to sentences in $Sen(\Theta_C^\approx)$).

Now, let $\mathcal{M}(\mathcal{B}_{\Delta}^\approx)$, $\mathfrak{A}_\Delta^\approx$ and $v_{\Delta}^\approx$ be as in Definitions~\ref{nmaeq} and~\ref{streq}, and as in Proposition~\ref{valcaeq},  respectively. Then,  $v_{\Delta}^\approx(\alpha) \in D_{\Delta}^\approx \mbox{ iff } \Delta \vdash_{\qmbceq}^{C} \alpha$, for every $\alpha$ in $Sen(\Theta_{C}^\approx)$ (by observing that $(\alpha)^\triangleleft=\alpha$ if $\alpha \in Sen(\Theta_{C}^\approx)$). From this, $v_{\Delta}^\approx[\Gamma] \subseteq D_{\Delta}^\approx$ and $v_{\Delta}^\approx(\varphi) \notin D_{\Delta}^\approx$. Finally, let $\mathfrak{A}$ and $v$ be the restriction to $\Theta_\approx$ of $\mathfrak{A}_\Delta^\approx $ and $v_\Delta^\approx$, respectively. Then, $\mathfrak{A}$ is a structure with standard equality over $\mathcal{M}(\mathcal{B}_\Delta^\approx)$, and $v$ is a valuation for \qmbceq\ over $\mathfrak{A}$ and $\mathcal{M}(\mathcal{B}_\Delta^\approx)$ such that $v[\Gamma] \subseteq D_\Delta^\approx$ but $v(\varphi) \notin D_\Delta^\approx$. This shows that $\Gamma \not\models_{\qmbceq} \varphi$.
\end{proof}

\noindent
By using universal closure, as it was done in Corollary~\ref{comp-QmbC}, a general completeness result can be obtained for \qmbceq:

\begin{corollary} {\em (Completeness of \qmbceq\ w.r.t. first-order swap structures with standard equality)} \label{comp-QmbCeq} Let $\Gamma \cup \{\varphi\} \subseteq For(\Theta_\approx)$.  If $\Gamma \models_{\qmbceq} \varphi$ then $\Gamma\vdash_{\qmbceq} \varphi$.
\end{corollary}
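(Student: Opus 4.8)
The plan is to reduce the claim to the sentence-level completeness already obtained in Theorem~\ref{comp-sent-Qmbceq-swap}, mirroring exactly the passage from Theorem~\ref{comp-sent-Qmbc-swap} to Corollary~\ref{comp-QmbC}. Recall that for any $\psi \in For(\Theta_\approx)$ its universal closure $(\forall)\psi$ is a sentence, and for a set $\Gamma$ we put $(\forall)\Gamma = \{(\forall)\psi \ : \ \psi \in \Gamma\}$. The two facts I need are: (i)~$\Gamma \vdash_{\qmbceq} \varphi$ iff $(\forall)\Gamma \vdash_{\qmbceq} (\forall)\varphi$; and (ii)~$\Gamma \models_{\qmbceq} \varphi$ iff $(\forall)\Gamma \models_{\qmbceq} (\forall)\varphi$. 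Granting these, the argument is immediate: if $\Gamma \models_{\qmbceq} \varphi$ then by (ii) $(\forall)\Gamma \models_{\qmbceq} (\forall)\varphi$; since $(\forall)\Gamma \cup \{(\forall)\varphi\} \subseteq Sen(\Theta_\approx)$, Theorem~\ref{comp-sent-Qmbceq-swap} yields $(\forall)\Gamma \vdash_{\qmbceq} (\forall)\varphi$; and by (i) we conclude $\Gamma \vdash_{\qmbceq} \varphi$.

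For (i), since \qmbceq\ is an axiomatic extension of \qmbc\ obtained by adding only the axiom schemas~({\bf AxEq1}) and~({\bf AxEq2}), and no new inference rules, it retains the quantifier rules~($\exists$-In) and~($\forall$-In), axioms~({\bf Ax12})--({\bf Ax14}), and the Deduction Meta-Theorem (Theorem~\ref{teoded:teo}, which holds for any axiomatic extension of \qmbc, as noted in Section~\ref{secEq}). Hence the routine derivation of the syntactic equivalence between $\Gamma \vdash \varphi$ and $(\forall)\Gamma \vdash (\forall)\varphi$ sketched before Corollary~\ref{comp-QmbC} goes through verbatim in \qmbceq.

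For (ii), observe that by Definition~\ref{valeq} every \qmbceq-valuation is a \qmbc-valuation satisfying the additional clause~(viii); hence Proposition~\ref{propquant} applies to it, giving $v(\forall x_1\cdots\forall x_n\psi) \in D$ iff $v_\mu(\psi) \in D$ for every assignment $\mu$, whenever $FV(\psi) \subseteq \{x_1,\ldots,x_n\}$. Since the relation $\Gamma \models^\approx_{(\mathfrak{A},\mathcal{M}(\mathcal{B}))} \varphi$ already quantifies over all assignments $\mu$ (Definition~\ref{valeq} and the discussion following it), the free variables of the members of $\Gamma \cup \{\varphi\}$ are effectively universally quantified on both sides, so passing to universal closures changes nothing semantically. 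This holds uniformly over all structures with standard equality and all \qmbceq-valuations over them, because Proposition~\ref{propquant} concerns a single fixed structure, Nmatrix and valuation; thus $\Gamma \models_{\qmbceq} \varphi$ iff $(\forall)\Gamma \models_{\qmbceq} (\forall)\varphi$.

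The only point that genuinely calls for a check is that both equivalences survive the addition of equality, and this is immediate: the predicate $\approx$ contributes no new rules of inference, the axioms~({\bf AxEq1})--({\bf AxEq2}) are ordinary schemas, and the standard-equality constraint together with clause~(viii) of Definition~\ref{valeq} are orthogonal to the quantifier-manipulation steps used in Theorem~\ref{teoded:teo} and Proposition~\ref{propquant}. In effect, for the purposes of this reduction $\approx$ behaves like any other predicate symbol, so no real obstacle arises and the argument is routine.
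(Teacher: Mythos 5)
Your proposal is correct and follows exactly the route the paper takes: the paper derives this corollary from Theorem~\ref{comp-sent-Qmbceq-swap} ``by using universal closure, as it was done in Corollary~\ref{comp-QmbC}'', i.e.\ via the same two equivalences $\Gamma\vdash_{\qmbceq}\varphi$ iff $(\forall)\Gamma\vdash_{\qmbceq}(\forall)\varphi$ and $\Gamma\models_{\qmbceq}\varphi$ iff $(\forall)\Gamma\models_{\qmbceq}(\forall)\varphi$ that you establish. Your write-up is in fact more explicit than the paper's one-line justification, correctly noting that the syntactic equivalence survives because \qmbceq\ adds only axiom schemas and that the semantic equivalence follows since every \qmbceq-valuation is a \qmbc-valuation to which Proposition~\ref{propquant} applies.
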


\section{Completeness of \qmbceq\ w.r.t. structures with standard equality over $\matM_5$} \label{compM5eq}

In this section the adequacy of \qmbc\ w.r.t. first-order swap structures stated in Theorem~\ref{adeq-qmbcM5} will be extended to \qmbceq. In order to do this, some definitions taken from  Section~\ref{compM5} will be adapted to \qmbceq, by  following the approach in~\cite[Section~7.7]{CC16} with small modifications. In particular, \cite[Definition~7.7.3]{CC16} will be slightly adapted as follows:

\begin{definition} \label{sem2Qmbceq}
An {\em interpretation} for \qmbceq\ over a signature $\Theta_\approx$ is a pair $\langle \mathsf{A}, \rho\rangle$ such that $\mathsf{A}$ is a standard Tarskian first-order structure with standard equality over $\Theta_\approx$\footnote{That is, $\mathsf{A}$ is a standard Tarskian first-order structure over signature $\Theta_\approx$, as considered in Section~\ref{compM5}, in which the equality predicate $\approx$ is interpreted as the identity: $\approx^\mathsf{A} \defin \{(a,a) \ : \ a \in U\}$.} and $\rho$ is a bivaluation for \qmbc\ over  $\mathsf{A}$. The {\em consequence relation $\models_{\qmbceq}^2$ of \qmbceq\ w.r.t. interpretations} is given by: $\Gamma\models_{\qmbceq}^2 \varphi$ if, for every interpretation $\langle \mathsf{A}, \rho\rangle$ for \qmbceq: $\rho(\widehat{\mu}(\gamma))=1$ for every $\gamma \in \Gamma$ and every $\mu$ implies that $\rho(\widehat{\mu}(\varphi))=1$ for every $\mu$.
\end{definition}

\begin{remark} \label{obsvaleq} \ \\
(1) In~\cite[Definition~7.7.3]{CC16} it was introduced the notion of \qmbceq-valuations, which are bivaluations for \qmbc\ over  standard Tarskian structures $\mathsf{A}$ over $\Theta_\approx$ satisfying  for $\approx$, instead of~(\valpred), the following clauses:\\[2mm]
{\bf (\valeq1)} \ $\rho(t_1 \approx t_2) = 1$ \  iff \ 
$\termvalue{t_1}^{\widehat{\mathsf{A}}} = \termvalue{t_2}^{\widehat{\mathsf{A}}}$ for every $t_1,t_2 \in CTer(\Theta_U^\approx)$;\\[1mm]
{\bf (\valeq2)} \ $\rho(\bar a \approx \bar b)=1$  implies $\rho(\alpha[x,y/\bar a,\bar b]) = \rho(\alpha[x\wr y][x,y/\bar a,\bar b])$ for every $a,b\in U$, if
$y$ is a variable free for $x$ in $\alpha$.\\[2mm]
It is easy to see that~(\valeq2) is derivable from~(\valeq1). Indeed, suppose that $\rho(\bar a \approx \bar b)=1$. Hence $a = \termvalue{\bar a}^{\widehat{\mathsf{A}}} = \termvalue{\bar b}^{\widehat{\mathsf{A}}} = b$, by  clause~(\valeq1). But then $\bar a = \bar b$, which implies that $\alpha[x,y/\bar a,\bar b] = \alpha[x\wr y][x,y/\bar a,\bar b]$. Thus, 
$\rho(\alpha[x,y/\bar a,\bar b]) = \rho(\alpha[x\wr y][x,y/\bar a,\bar b])$, showing that $\rho$ also satisfies~(\valeq2).\\[1mm]
(2) Let $\langle \mathsf{A}, \rho\rangle$ be an interpretation for \qmbceq\ as in Definition~\ref{sem2Qmbceq}. Then, by~(\valpred) applied to $\approx$ (recall Definition~\ref{bivalqmbc}), and by the fact that $\approx^\mathsf{A} = \{(a,a) \ : \ a \in U\}$, it follows that $\rho$ satisfies clause~(\valeq1), hence it also satisfies~(\valeq2), by item~(1) above. This means that $\langle \mathsf{A}, \rho\rangle$ is an interpretation for \qmbceq\ in the sense of~\cite[Definition~7.7.3]{CC16}. Conversely, if  $\langle \mathsf{A}, \rho\rangle$ is  an interpretation for \qmbceq\ in the sense of~\cite[Definition~7.7.3]{CC16}  let  $\mathsf{A}'$ be the standard  Tarskian structure over $\Theta_\approx$ obtained from $\mathsf{A}$ by setting  $\approx^{\mathsf{A}'} \defin \{(a,a) \ : \ a \in U\}$.  Hence  $\langle \mathsf{A}', \rho\rangle$ is an interpretation for \qmbceq\ as in Definition~\ref{sem2Qmbceq}, since $\rho$ satisfies~(\valeq1) and so it satisfies~(\valpred) applied to $\approx$. This shows that our presentation is equivalent to that of~\cite{CC16}. 
\end{remark}

\begin{theorem} {\em (Adequacy of \qmbceq\ w.r.t. interpretations, \cite[Theorem~7.7.5]{CC16})} \label{comple2Qmbceq} If $\Gamma \cup \{\varphi\} \subseteq For(\Theta_\approx)$ then:  $\Gamma\vdash_{\qmbceq} \varphi$ \ iff \ $\Gamma\models_{\qmbceq}^2 \varphi$.\footnote{As in the case of \qmbc, in~\cite[Theorem~7.7.5]{CC16} it was obtained adequacy of \qmbceq\ w.r.t. interpretations, but only for sentences. Once again, adequacy for general formulas follows from that result by using universal closure.}
\end{theorem}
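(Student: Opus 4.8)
The plan is to prove the two halves of the biconditional separately, following the pattern used for \qmbc\ in Theorem~\ref{comple2Qmbc} and reusing the equality bookkeeping of Remark~\ref{obsvaleq}. For \emph{soundness} ($\Gamma\vdash_{\qmbceq}\varphi$ implies $\Gamma\models_{\qmbceq}^2\varphi$), I would proceed by induction on the length of a derivation of $\varphi$ from $\Gamma$ in \qmbceq. Regarding $\approx$ as an ordinary binary predicate, every interpretation $\langle\mathsf{A},\rho\rangle$ for \qmbceq\ in the sense of Definition~\ref{sem2Qmbceq} is a fortiori an interpretation for \qmbc\ over the signature $\Theta_\approx$, so the \qmbc-axioms and inference rules are handled by (the soundness part of) Theorem~\ref{comple2Qmbc}. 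It thus remains to check the two new axiom schemas. For $(\mathbf{AxEq1})$: by clause $(\valuni)$ it suffices that $\rho(\bar a\approx\bar a)=1$ for every $a\in U$, which holds by $(\valpred)$ applied to $\approx$ since $(a,a)\in\,\approx^{\mathsf{A}}=\{(b,b):b\in U\}$. For $(\mathbf{AxEq2})$: fix an assignment $\mu$ and set $a=\mu(x)$, $b=\mu(y)$; if $a\neq b$ then $\rho(\bar a\approx\bar b)=0$ by $(\valpred)$ and the implication holds by $(\valimp)$, while if $a=b$ then $\bar a=\bar b$, so $\widehat\mu(\varphi)$ and $\widehat\mu(\varphi[x\wr y])$ are syntactically identical and $(\valimp)$ again yields value $1$ --- this is exactly the derivation of $(\valeq2)$ from $(\valeq1)$ in Remark~\ref{obsvaleq}. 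This gives $\Gamma\models_{\qmbceq}^2\varphi$.

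For \emph{completeness} ($\Gamma\models_{\qmbceq}^2\varphi$ implies $\Gamma\vdash_{\qmbceq}\varphi$) I would argue contrapositively. As in Corollary~\ref{comp-QmbCeq}, first reduce to the case $\Gamma\cup\{\varphi\}\subseteq Sen(\Theta_\approx)$ by replacing each formula by its universal closure, using that $(\forall)\Gamma\vdash_{\qmbceq}(\forall)\varphi$ iff $\Gamma\vdash_{\qmbceq}\varphi$ and $(\forall)\Gamma\models_{\qmbceq}^2(\forall)\varphi$ iff $\Gamma\models_{\qmbceq}^2\varphi$. Assuming $\Gamma\nvdash_{\qmbceq}\varphi$, Proposition~\ref{saturatedeq} supplies a set $C$ of new constants and a $C$-Henkin theory $\Delta\subseteq Sen(\Theta_C^\approx)$ with $\Gamma\subseteq\Delta$ that is maximally non-trivial with respect to $\varphi$ over $Sen(\Theta_C^\approx)$. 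From $\Delta$ I would build a \emph{Tarskian} canonical model together with a canonical bivaluation, rather than a swap-structure model: take the domain $U=\{\widetilde c:c\in C\}$ with $\widetilde c$ the class of $c$ under $c\simeq d\iff\Delta\vdash^{C}_{\qmbceq}(c\approx d)$; interpret individual constants and function symbols exactly as in Definition~\ref{streq}; interpret each $n$-ary predicate $P$ by the set $\{(\widetilde c_1,\ldots,\widetilde c_n):\Delta\vdash^{C}_{\qmbceq}P(c_1,\ldots,c_n)\}$; and set $\approx^{\mathsf{A}}=\{(a,a):a\in U\}$. Then define $\rho:Sen((\Theta_C^\approx)_U)\to\{0,1\}$ by $\rho(\psi)=1$ iff $\Delta\vdash^{C}_{\qmbceq}(\psi)^\triangleleft$, with $(\cdot)^\triangleleft$ the translation of Definition~\ref{translaeq} (the choices made in $(\cdot)^\triangleleft$ being irrelevant by the remark following that definition).

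The heart of the argument is then to verify that $\langle\mathsf{A},\rho\rangle$ is an interpretation for \qmbceq, i.e.\ that $\rho$ satisfies all the clauses of Definition~\ref{bivalqmbc}. The clauses $(\vale)$--$(\valbola)$ for the propositional connectives and for $\neg,\circ$ follow from the maximal non-triviality of $\Delta$ together with the \mbc-axioms (the first-order lift of the bivaluation completeness of \mbc, Theorem~\ref{comp-bival-mbC}); $(\valpred)$ for an ordinary predicate $P$ follows from the definition of $I_{\mathsf{A}}(P)$ and the well-definedness facts established after Definition~\ref{streq}, and $(\valpred)$ for $\approx$ holds because $\Delta\vdash^{C}_{\qmbceq}(c\approx d)$ iff $\widetilde c=\widetilde d$; $(\valvar)$ follows from $(\mathbf{Ax14})$; $(\valuni)$ and $(\valex)$ follow from the $C$-Henkin property (via Remark~\ref{univ-Henk} and $(\mathbf{Ax12})$, exactly the reasoning behind Lemma~\ref{quantOK}); and $(\valsubs)$ follows from the syntactic identity $(\widehat\mu(\varphi[z/t]))^\triangleleft=(\widehat\mu(\varphi[z/\bar b]))^\triangleleft$ for $b=\termvalue{t}^{\widehat{\mathsf{A}}}_{\mu}$, proved by induction on $\varphi$ just as in the verification of clause~(vi) for the canonical valuation in Theorem~\ref{canval}. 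Since $\Gamma\subseteq\Delta$ and $(\alpha)^\triangleleft=\alpha$ for $\alpha\in Sen(\Theta_C^\approx)$, one gets $\rho(\widehat\mu(\gamma))=1$ for every $\gamma\in\Gamma$ and every $\mu$ while $\rho(\varphi)=0$; discarding the auxiliary constants $C$ then yields an interpretation for \qmbceq\ over $\Theta_\approx$ witnessing $\Gamma\not\models_{\qmbceq}^2\varphi$, and reversing the universal-closure reduction completes the argument.

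The step I expect to be the main obstacle is the verification of $(\valsubs)$ for the canonical bivaluation, and more broadly keeping the bookkeeping coherent between the auxiliary signature $\Theta_C^\approx$, its diagram language, the quotient domain $U=\{\widetilde c:c\in C\}$, and the translation $(\cdot)^\triangleleft$ --- the same bookkeeping that, in the non-deterministic setting, forces the extra Substitution-Lemma clauses into the definition of valuations. Everything else is a direct adaptation of the \qmbc\ arguments already carried out in the paper.
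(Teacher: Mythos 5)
The paper does not actually prove this theorem: it is imported from \cite[Theorem~7.7.5]{CC16}, with only the universal-closure step supplied locally (as in Corollary~\ref{comp-QmbCeq}). Your reconstruction is the standard Henkin-style argument underlying the cited result, and its architecture is sound: soundness by induction on derivations with ({\bf AxEq1}) and ({\bf AxEq2}) checked against (\valpred) for $\approx$ exactly as in Remark~\ref{obsvaleq}; completeness via a maximally non-trivial $C$-Henkin theory $\Delta$, the quotient domain $U=\{\widetilde c : c\in C\}$, and the canonical bivaluation $\rho(\psi)=1$ iff $\Delta\vdash^{C}_{\qmbceq}(\psi)^\triangleleft$.

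One detail is stated incorrectly, precisely at the step you flag as the main obstacle. In the quotient-domain canonical model the claimed \emph{syntactic} identity $(\widehat{\mu}(\varphi[z/t]))^\triangleleft=(\widehat{\mu}(\varphi[z/\bar b]))^\triangleleft$, for $b=\termvalue{t}^{\widehat{\mathsf{A}}}_{\mu}$, is false in general: $(\widehat{\mu}(t))^\triangleleft$ is a compound closed term of $\Theta_C^\approx$, whereas $(\bar b)^\triangleleft$ is a single constant, namely the chosen representative of the equivalence class $b$; the two terms are distinct although $\Delta$-provably $\approx$-equal. (The syntactic identity does hold in the equality-free canonical model of Theorem~\ref{canval}, where $U=CTer(\Theta)$ and $(\bar b)^\triangleright$ literally coincides with $(\widehat{\mu}(t))^\triangleright$; that is the argument you are transplanting, and it does not survive the passage to the quotient.) The repair is immediate: from $\Delta\vdash^{C}_{\qmbceq}(\widehat{\mu}(t))^\triangleleft\approx(\bar b)^\triangleleft$ and the Leibniz property derivable from ({\bf AxEq2}) one gets $\Delta\vdash^{C}_{\qmbceq}(\widehat{\mu}(\psi[z/t]))^\triangleleft\leftrightarrow(\widehat{\mu}(\psi[z/\bar b]))^\triangleleft$ for every $\psi$, so $\rho(\widehat{\mu}(\psi[z/t]))=\rho(\widehat{\mu}(\psi[z/\bar b]))$ holds unconditionally and (\valsubs) follows. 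With that correction the proof goes through.
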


\noindent Theorem~\ref{val-bival-qmbC} can be easily extended to \qmbceq:

\begin{theorem}  \label{val-bival-qmbCeq}
Let  $\mathcal{I}=\langle \mathsf{A}, \rho\rangle$ be an interpretation for \qmbceq\ over a signature $\Theta_\approx$. Then, it induces a first-order structure with standard equality $\mathfrak{A}_\mathcal{I}$ over $\matM_5$ and $\Theta_\approx$, and a \qmbceq-valuation $v^\rho_\approx$ over  $\mathfrak{A}_\mathcal{I}$ and $\matM_5$  given by $v^\rho_\approx(\alpha) \defin (\rho(\alpha),\rho(\neg\alpha),\rho(\cons\alpha))$ such that: $\rho(\alpha)=1$ iff $v^\rho_\approx(\alpha) \in {\rm D}$, for every sentence $\alpha \in Sen(\Theta_U^\approx)$.
\end{theorem}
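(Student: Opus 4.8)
The plan is to obtain $\mathfrak{A}_\mathcal{I}$ and $v^\rho_\approx$ almost for free from Theorem~\ref{val-bival-qmbC}, and then to check only the two extra conditions that separate \qmbceq\ from \qmbc: that $\mathfrak{A}_\mathcal{I}$ has standard equality, and that $v^\rho_\approx$ satisfies clause~(viii) of Definition~\ref{valeq}. Concretely, since $\mathcal{I}=\langle \mathsf{A},\rho\rangle$ is in particular an interpretation for \qmbc\ over the signature $\Theta_\approx$ (reading $\approx$ as an ordinary binary predicate symbol), Theorem~\ref{val-bival-qmbC} applied with $\Theta := \Theta_\approx$ yields a structure $\mathfrak{A}_\mathcal{I}$ over $\matM_5$ and $\Theta_\approx$ whose interpretation of $\approx$ is $I_{\mathfrak{A}_\mathcal{I}}(\approx)(a,b) = v^\rho(\bar a \approx \bar b)$, together with a \qmbc-valuation $v^\rho$ over $(\mathfrak{A}_\mathcal{I},\matM_5)$ given by $v^\rho(\alpha) = (\rho(\alpha),\rho(\neg\alpha),\rho(\cons\alpha))$ and satisfying $\rho(\alpha)=1$ iff $v^\rho(\alpha) \in {\rm D}$. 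Set $v^\rho_\approx := v^\rho$; then the displayed ``iff'' of the statement is already established, and clauses~(i)--(vii) of Definition~\ref{val} hold for $v^\rho_\approx$ (clause~(i) covering atomic formulas built with $\approx$, since $\approx$ is just a predicate symbol as far as that theorem is concerned).

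For standard equality, fix $a,b \in U$. Using the ``iff'' just recalled, together with the fact that ${\rm D}=\{T,t,t_0\}$ consists exactly of the snapshots of $\matM_5$ with first coordinate $1$, we get $I_{\mathfrak{A}_\mathcal{I}}(\approx)(a,b) \in {\rm D}$ iff $\rho(\bar a \approx \bar b)=1$. By clause~{\bf (\valpred)} of Definition~\ref{bivalqmbc} applied to the predicate $\approx$, the latter holds iff $\langle \termvalue{\bar a}^{\widehat{\mathsf{A}}},\termvalue{\bar b}^{\widehat{\mathsf{A}}}\rangle = \langle a,b\rangle$ belongs to $\approx^\mathsf{A}$; and by Definition~\ref{sem2Qmbceq} (the hypothesis that $\mathcal{I}$ is an interpretation for \qmbceq) we have $\approx^\mathsf{A} = \{(c,c) \ : \ c \in U\}$, so this holds iff $a=b$. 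Hence $I_{\mathfrak{A}_\mathcal{I}}(\approx)(a,b) \in {\rm D}$ iff $a=b$, i.e.\ $\mathfrak{A}_\mathcal{I}$ is a structure with standard equality over $\matM_5$ and $\Theta_\approx$. (That $I_{\mathfrak{A}_\mathcal{I}}(\approx)$ takes values in $|\matM_5|=B_{\mA_2}$ is immediate: $v^\rho(\bar a \approx \bar b)$ is a snapshot by clauses~{\bf (\valnot)} and~{\bf (\valbola)} for $\rho$, exactly as in the proof of Theorem~\ref{val-bival-mbC}.)

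For clause~(viii), let $\alpha = (x \approx y) \to (\psi \to \psi[x \wr y])$ be an instance of axiom~\textbf{(AxEq2)} (with $y$ free for $x$ in $\psi$) and let $\mu$ be an assignment. Since $\alpha$ is a theorem of \qmbceq, soundness of \qmbceq\ with respect to interpretations (the ``only if'' half of Theorem~\ref{comple2Qmbceq}) gives $\rho(\widehat{\mu}(\alpha))=1$, equivalently $v^\rho_\approx(\widehat{\mu}(\alpha)) = v^\rho_{\approx,\mu}(\alpha) \in {\rm D}$; so $v^\rho_\approx$ satisfies clause~(viii) and is therefore a \qmbceq-valuation over $\mathfrak{A}_\mathcal{I}$ and $\matM_5$. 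Alternatively, clause~(viii) can be verified directly, imitating the derivation of~{\bf (\valeq2)} in Remark~\ref{obsvaleq}(1): if $\rho(\widehat{\mu}(x \approx y))=1$ then $\mu(x)=\mu(y)$ by~{\bf (\valpred)}, whence $\widehat{\mu}(\psi)$ and $\widehat{\mu}(\psi[x \wr y])$ are the very same sentence, so $\rho$ of the consequent is $1$ and $\rho(\widehat{\mu}(\alpha))=1$, both by~{\bf (\valimp)}.

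Since the argument is a transfer of Theorem~\ref{val-bival-qmbC} along the equality clause, no genuine obstacle arises; the only point meriting care is ensuring clause~(viii) is available, which is handled either by invoking soundness of \qmbceq\ (already established in Theorem~\ref{comple2Qmbceq}) or by the short direct computation above.
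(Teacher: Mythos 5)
Your proposal is correct and takes essentially the same approach as the paper: the structure and valuation are lifted from Theorem~\ref{val-bival-qmbC} applied over $\Theta_\approx$, standard equality is checked via ({\valpred}) for $\approx$ together with $\approx^{\mathsf{A}}=\{(a,a) : a\in U\}$ (the paper phrases this through ({\valeq}1), which Remark~\ref{obsvaleq} reduces to ({\valpred})), and clause~(viii) is verified by the same substitution argument as in Remark~\ref{obsvaleq}(1). Your primary route to clause~(viii) via the soundness half of Theorem~\ref{comple2Qmbceq} is a legitimate, non-circular variant (that theorem is imported from the literature), while your ``alternative'' direct computation is precisely the paper's own argument.
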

\begin{proof} Let $\mathcal{I}=\langle \mathsf{A}, \rho\rangle$ be an interpretation for \qmbceq\ over signature $\Theta_\approx$. Consider the first-order structure $\mathfrak{A}_\mathcal{I}$ over $\matM_5$ and $\Theta_\approx$ obtained from $\mathsf{A}$ as in the proof of Theorem~\ref{val-bival-qmbC}. In particular, $I_{\mathfrak{A}_\mathcal{I}}(\approx)(a_1,a_2)= v^\rho_\approx(\bar{a}_1 \approx\bar{a}_n)$,
where $v^\rho_\approx:Sen(\Theta_U^\approx) \to |\matM_5|$ is given by $v^\rho_\approx(\alpha) = (\rho(\alpha),\rho(\neg\alpha),\rho(\cons\alpha))$, for every  $\alpha \in Sen(\Theta_U^\approx)$. It is immediate to see that $\rho(\alpha)=1$ iff $v^\rho_\approx(\alpha) \in {\rm D}$, for every  $\alpha \in Sen(\Theta_U^\approx)$. From this,
$I_{\mathfrak{A}_\mathcal{I}}(\approx)(a_1,a_2)= v^\rho_\approx(\bar{a}_1 \approx\bar{a}_n) \in {\rm D}$ iff  $\rho(\bar{a}_1 \approx\bar{a}_2)= 1$ iff $a_1=a_2$,
since $\rho$ satisfies clause~(\valeq1), by Remark~\ref{obsvaleq}. This shows that $\mathfrak{A}_\mathcal{I}$ is a first-order structure with standard equality  over $\matM_5$ and $\Theta_\approx$.

In order to see that $v^\rho_\approx$ is a \qmbceq-valuation $v^\rho_\approx$ over  $\mathfrak{A}_\mathcal{I}$ and $\matM_5$ observe that $v^\rho_\approx$  satisfies clause~(i) of Definition~\ref{val} for every predicate symbol in $\Theta$. 
Concerning the equality predicate $\approx$ it is easy to prove that that, by the axioms of equality, the properties of  bivaluations for \qmbc, and the fact that   $a=\termvalue{\bar a}^{\widehat{\mathfrak{A}_\mathcal{I}}}=\termvalue{\bar a}^{\widehat{\mathsf{A}}}$ for every $a \in U$ (in particular, for $a=\termvalue{t}^{\widehat{\mathfrak{A}_\mathcal{I}}}$, for $t \in CTer(\Theta_U^\approx)$),
$v^\rho_\approx(t_1 \approx t_2)  =  v^\rho_\approx\left(\overline{\termvalue{t_1}^{\widehat{\mathfrak{A}_\mathcal{I}}}} \approx \overline{\termvalue{t_2}^{\widehat{\mathfrak{A}_\mathcal{I}}}}\right) = I_{\mathfrak{A}_\mathcal{I}}(\approx)\left(\termvalue{t_1}^{\widehat{\mathfrak{A}_\mathcal{I}}},\termvalue{t_2}^{\widehat{\mathfrak{A}_\mathcal{I}}}\right)$. This shows that $v^\rho_\approx$ also satisfies clause~(i) of Definition~\ref{val} for the equality predicate $\approx$. By the proof of Theorem~\ref{val-bival-qmbC}, it follows that $v^\rho_\approx$ satisfies the other clauses of Definition~\ref{val} for \qmbc-valuation over  $\mathfrak{A}_\mathcal{I}$ and $\matM_5$.
It remains to prove that $v^\rho_\approx$ is a \qmbceq-valuation over  $\mathfrak{A}_\mathcal{I}$ and $\matM_5$, that is, $v^\rho_\approx(\widehat{\mu}(\alpha)) \in {\rm D}$ for every instance $\alpha$ of  axiom~({\bf AxEq2}) and every assignment $\mu$. But this is easy to prove,  by the properties of $\rho$ and by an argument similar to that presented in Remark~\ref{obsvaleq}(1). This concludes the proof.
\end{proof}

\noindent As an immediate consequence of  Theorems~\ref{sound-Qmbceq-swap}, \ref{comple2Qmbceq} and~\ref{val-bival-qmbCeq}:

\begin{theorem} {\em (Adequacy of \qmbceq\ w.r.t. first-order structures with standard equality over $\matM_5$)} \label{adeq-qmbcM5eq}
For every set $\Gamma \cup \{\varphi\} \subseteq For(\Theta_\approx)$:  $\Gamma\vdash_{\qmbceq} \varphi$ \ iff \ $\Gamma\models_{(\mathfrak{A}, \matM_5)}^\approx \varphi$ for every  structure $\mathfrak{A}$ with  standard equality over $\Theta_\approx$ and $\matM_5$.
\end{theorem}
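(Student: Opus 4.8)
The plan is to derive this adequacy theorem as a routine corollary of the three results cited just before its statement, exactly mirroring the proof of Theorem~\ref{adeq-qmbcM5} but now tracking the equality predicate. The soundness direction (`only if') is immediate from Theorem~\ref{sound-Qmbceq-swap}, since any structure with standard equality over $\matM_5$ is in particular a first-order swap structure with standard equality, so $\Gamma\vdash_{\qmbceq}\varphi$ gives $\Gamma\models_{(\mathfrak{A},\matM_5)}^\approx\varphi$ for every such $\mathfrak{A}$. The completeness direction (`if') is where the real content lies, and I would run it through the bivaluation/interpretation semantics rather than directly through swap structures.

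\textbf{Key steps for completeness.} Assume $\Gamma\models_{(\mathfrak{A},\matM_5)}^\approx\varphi$ for every structure $\mathfrak{A}$ with standard equality over $\Theta_\approx$ and $\matM_5$. First I would take an arbitrary interpretation $\mathcal{I}=\langle\mathsf{A},\rho\rangle$ for \qmbceq\ over $\Theta_\approx$ such that $\rho(\widehat{\mu}(\gamma))=1$ for every $\gamma\in\Gamma$ and every assignment $\mu$. Then I invoke Theorem~\ref{val-bival-qmbCeq} to obtain the induced first-order structure with standard equality $\mathfrak{A}_\mathcal{I}$ over $\matM_5$ and $\Theta_\approx$ together with the \qmbceq-valuation $v^\rho_\approx$ over $\mathfrak{A}_\mathcal{I}$ and $\matM_5$, satisfying $\rho(\alpha)=1$ iff $v^\rho_\approx(\alpha)\in\mathrm{D}$ for every sentence in $Sen(\Theta_U^\approx)$. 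The hypothesis on $\rho$ then translates into $v^\rho_{\approx,\mu}(\gamma)\in\mathrm{D}$ for every $\gamma\in\Gamma$ and every $\mu$ (using $v^\rho_{\approx,\mu}(\psi)=v^\rho_\approx(\widehat{\mu}(\psi))$ as in Definition~\ref{vmu}). Applying the standing assumption $\Gamma\models_{(\mathfrak{A}_\mathcal{I},\matM_5)}^\approx\varphi$ yields $v^\rho_{\approx,\mu}(\varphi)\in\mathrm{D}$ for every $\mu$, hence $\rho(\widehat{\mu}(\varphi))=1$ for every $\mu$. Since $\mathcal{I}$ was arbitrary, this says $\Gamma\models_{\qmbceq}^2\varphi$, and then Theorem~\ref{comple2Qmbceq} gives $\Gamma\vdash_{\qmbceq}\varphi$, as desired.

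\textbf{Main obstacle.} The only delicate point is making sure that in Theorem~\ref{val-bival-qmbCeq} the structure $\mathfrak{A}_\mathcal{I}$ is genuinely a structure \emph{with standard equality} over $\matM_5$, i.e.\ that $I_{\mathfrak{A}_\mathcal{I}}(\approx)(a_1,a_2)\in\mathrm{D}$ iff $a_1=a_2$; but this has already been established in the proof of that theorem, using clause~(\valeq1) satisfied by $\rho$ (Remark~\ref{obsvaleq}), so no new work is needed here. I would also note in passing, as the last sentence of the previous section already does for the equality-free case, that by Remark~\ref{defquantM5} (suitably read for structures with equality) this result restates the adequacy obtained by Avron and Zamansky in~\cite{avr:zam:07}, now extended from $\mA_2$ to arbitrary complete Boolean algebras. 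In short, once Theorems~\ref{sound-Qmbceq-swap}, \ref{comple2Qmbceq} and~\ref{val-bival-qmbCeq} are in place the argument is a direct bookkeeping exercise with no genuinely hard step.
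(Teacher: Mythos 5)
Your proposal is correct and matches the paper exactly: the paper states the theorem as ``an immediate consequence of Theorems~\ref{sound-Qmbceq-swap}, \ref{comple2Qmbceq} and~\ref{val-bival-qmbCeq}'', and your argument is precisely the intended expansion of that remark, mirroring the proof of Theorem~\ref{adeq-qmbcM5}. No gaps.
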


\section{First order twist structures  based on the logic $\lfium_\cons$} \label{Qtwist}

The generalization of  swap structures semantics to other quantified \lfis, defined as axiomatic extensions of \qmbc, can be easily obtained. Indeed, by analyzing the  swap structures semantics for axiomatic extensions of \mbc\ given in~\cite[Section~6.5]{CC16} (see also~\cite{CFG18}), as well as the first-order version of such extensions proposed in~\cite[Section~7.8]{CC16}, it is immediate how to obtain first-order swap structures for all these logics. 
Thus, it is immediate to define \textbf{QmbCciw}, \textbf{QmbCci}, \textbf{QbC} and \textbf{QCi}, the quantified version of \textbf{mbCciw}, \textbf{mbCci}, \textbf{bC} and \textbf{Ci}, respectively, as well as the corresponding extensions of them by adding the standard equality. All these logics are characterized by means of first-order structures defined over 3-valued swap structures.\footnote{As proved by Avron in~\cite{avr:07}, the extension  \textbf{mbCcl} of \mbc\ by adding da Costa's axiom (cl): $\neg(\varphi \wedge \neg\varphi) \to \cons\varphi$ cannot be characterized by a single finite Nmatrix. This negative result also holds for  \textbf{Cila}, the presentation of a Costa's system $C_1$ in the language of \lfis, hence it holds for $C_1$ itself. Thus, these systems lies ouside the scope of the present framework. A  discussion about this question can be found in~\cite[Section~6.5]{CC16}. On the other hand, \textbf{Cila} is not algebraizable in the sense of Blok and Pigozzi (consult, for instance, \cite[Section~3.12]{CM02}). As a consequence of this, none of the logics \mbc, \textbf{mbCciw}, \textbf{mbCci}, \textbf{bC} and \textbf{Ci} is algebraizable.}

Instead of analyzing in this section these axiomatic extensions of \qmbc, together with the corresponding swap structures semantics (a straightforward exercise),  the first-order version of a quite interesting axiomatic extension of \mbc, the logic $\lfium_\cons$,  will be analyzed with full detail. This logic can be semantically characterized by a 3-valued logical matrix called \lfiuml, which is equivalent (up to language) to several 3-valued paraconsistent logics such as the well-known da Costa-D'Ottaviano logic \dacdot\ and Carnielli-Marcos-de Amo logic \lfium. From this, it follows that  $\lfium_\cons$ is algebraizable in the sense of Blok and Pigozzi (see~\cite[Chapter~4]{CC16} for a discussion about this logic). The interesting point is that, as proved in~\cite{CFG18}, the swap structures for $\lfium_\cons$ turn out to be deterministic, thus becoming  twist structures, which represent the algebraic semantics for $\lfium_\cons$.\footnote{Twist structures were independently introduced by Fidel~\cite{fid:78} and Vakarelov~\cite{vaka:77}. However, as observed by Cignoli in~\cite{cin:86}, the basic algebraic ideas underlying twist structures were firstly introduced by Kalman in~\cite{kal:58}.} Because of this, the first-order structures for the first-order extension of $\lfium_\cons$ presented here are based on twist structures.

\begin{definition} (\cite[Definition~4.4.39]{CC16}) \label{LFI1-def}
Let $\matM_{LFI1} = \langle  \aptz, \{1,\frac{1}{2}\}\rangle$ be the logical matrix such that  \aptz\ is the  3-valued algebra  over $\Sigma$ with domain $M= \{1,\frac{1}{2}, 0\}$, where the operators are defined as follows:

{\small
\begin{center}
\begin{tabular}{|c||c|c|c|}
\hline
$\land $ & 1 & $\frac12$ & 0 \\ \hline \hline
1 & 1 & $\frac12$ & 0 \\ \hline
$\frac12$ & $\frac{1}{2}$ & $\frac12$ & 0 \\ \hline
0 & 0 & 0 & 0 \\ \hline
\end{tabular}
\hspace{0.1cm}
 \begin{tabular}{|c||c|c|c|}
\hline
$\lor $ & 1 & $\frac12$ & 0 \\ \hline \hline
1 & 1 & 1 & 1 \\ \hline
$\frac12$ & 1 & $\frac{1}{2}$ & $\frac{1}{2}$ \\ \hline
0 & 1 & $\frac{1}{2}$ & 0 \\ \hline
\end{tabular}
\hspace{0.1cm}
\begin{tabular}{|c||c|c|c|}
\hline
$\imp$ & 1  & $\frac{1}{2}$  & 0 \\
 \hline \hline
     1    & 1  & $\frac{1}{2}$  & 0   \\ \hline
     $\frac{1}{2}$  & 1  & $\frac{1}{2}$  & 0   \\ \hline
     0    & 1  & 1  & 1   \\ \hline
\end{tabular}
\hspace{0.1cm}
\begin{tabular}{|c||c|} \hline
$\quad$ & $\wneg$ \\
 \hline \hline
     1   & 0    \\ \hline
     $\frac{1}{2}$ & $\frac{1}{2}$  \\ \hline
     0   & 1    \\ \hline
\end{tabular}
\hspace{0.1cm}
\begin{tabular}{|c||c|} \hline
$\quad$ & $\cons$ \\
 \hline \hline
     1   & 1    \\ \hline
     $\frac{1}{2}$ & 0  \\ \hline
     0   & 1    \\ \hline
\end{tabular}
\end{center}
}

\ 

\noindent The logic  associated to the logical matrix $\matM_{LFI1}$ is called \lfiuml.
\end{definition}

\noindent Recall that, by definition, the consequence relation $\models_{\lfiuml}$ of \lfiuml\ is given as follows: for every $\Gamma\cup\{\alpha\} \subseteq {\cal L}_{\Sigma}$, $\Gamma\models_{\lfiuml} \alpha$ iff, for every homomorphism $v:{\cal L}_{\Sigma} \to  \aptz$ of algebras over $\Sigma$, if $v[\Gamma] \subseteq \{1,\frac{1}{2}\}$ then $v(\alpha) \in \{1,\frac{1}{2}\}$.

A sound and complete Hilbert calculus for \lfiuml, called $\lfium_\circ$, was introduced in~\cite{CC16}. This calculus is an axiomatic extension  of \mbc.

\begin{definition} (\cite[Definition~4.4.41]{CC16}) \label{LPT0-def} The Hilbert calculus  $\lfium_\circ$  over $\Sigma$ is obtained from  \mbc\ by adding the following axioms:\\[1mm]
$\begin{array}{ll}
(\axci) & \neg\cons\alpha \to (\alpha \wedge \neg \alpha)\\[1mm]
(\axdneg) & \wneg\wneg\alpha \leftrightarrow \alpha\\[1mm]
(\axnegou) & \wneg(\alpha \lor \beta) \leftrightarrow (\wneg \alpha \land \wneg\beta)\\[1mm]
(\axnege) & \wneg(\alpha \land \beta) \leftrightarrow (\wneg \alpha \lor \wneg\beta)\\[1mm]
(\axnegimp) & \wneg(\alpha \imp \beta) \leftrightarrow(\alpha \land \wneg\beta)\\[1mm]
\end{array} $
\end{definition}

\begin{theorem} {\em (\cite[Theorem~4.4.45]{CC16})} \label{adeqMPT0}
The logic $\lfium_\circ$  is sound and complete  w.r.t. the matrix semantics of \lfiuml: $\Gamma\vdash_{\lfium_\circ} \alpha$ \ iff \ $\Gamma\models_{\lfiuml} \alpha$
\end{theorem}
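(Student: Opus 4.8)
The plan is to prove the two directions separately, by the standard methods for finitely many-valued logics presented by a Hilbert calculus.

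\textbf{Soundness} ($\vdash$ implies $\models$). I would verify that every instance of each axiom schema of $\lfium_\circ$ receives a designated value (a value in $\{1,\frac{1}{2}\}$) under every homomorphism $v:{\cal L}_{\Sigma}\to\aptz$, and that {\bf (MP)} preserves designated values; the latter is immediate from the $\imp$-table of $\matM_{LFI1}$, since $v(\alpha)\in\{1,\frac{1}{2}\}$ and $v(\alpha\imp\beta)\in\{1,\frac{1}{2}\}$ force $v(\beta)\in\{1,\frac{1}{2}\}$. The schemas {\bf (A1)}--{\bf (A11)} inherited from \mbc\ are checked once and for all, and each of the five new schemas (\axci), (\axdneg), (\axnegou), (\axnege), (\axnegimp) contains at most two propositional variables, so its validity is a finite inspection of the five tables above. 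An induction on the length of derivations then gives $\Gamma\vdash_{\lfium_\circ}\alpha\Rightarrow\Gamma\models_{\lfiuml}\alpha$.

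\textbf{Completeness} ($\models$ implies $\vdash$). I would argue by contraposition. Assume $\Gamma\nvdash_{\lfium_\circ}\alpha$. Since $\lfium_\circ$ is Tarskian and finitary, a standard Lindenbaum-style construction produces a set $\Delta\supseteq\Gamma$ that is maximally non-trivial with respect to $\alpha$; as usual such a $\Delta$ is a closed theory, and by the deduction metatheorem (available because {\bf (A1)}, {\bf (A2)}, {\bf (MP)} are present) together with the proof-by-cases axiom {\bf (A8)} it is prime: $\varphi\vee\psi\in\Delta$ iff $\varphi\in\Delta$ or $\psi\in\Delta$. From {\bf (A10)} plus primeness one gets $\varphi\notin\Delta\Rightarrow\wneg\varphi\in\Delta$, and from {\bf (A11)} and (\axci) one checks the \emph{trichotomy}: for every $\varphi$ exactly one of the cases (a) $\varphi\in\Delta$ and $\wneg\varphi\notin\Delta$, (b) $\varphi,\wneg\varphi\in\Delta$, (c) $\varphi\notin\Delta$ holds; moreover (a) is equivalent to ``$\varphi,\cons\varphi\in\Delta$'', (b) to ``$\varphi\in\Delta$ and $\cons\varphi\notin\Delta$'', and (c) implies ``$\wneg\varphi,\cons\varphi\in\Delta$''. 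One then defines $v_\Delta:{\cal L}_{\Sigma}\to\{1,\frac{1}{2},0\}$ by letting $v_\Delta(\varphi)$ be $1$, $\frac{1}{2}$ or $0$ according to whether (a), (b) or (c) holds.

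The core of the proof --- and the step I expect to be the main obstacle --- is showing that $v_\Delta$ is a homomorphism of algebras over $\Sigma$ into $\aptz$. For each connective one matches its three-valued table against membership in $\Delta$: for $\wedge,\vee,\imp$ one uses that $\Delta$ is a prime closed theory (e.g.\ $\varphi\imp\psi\in\Delta$ iff $\varphi\notin\Delta$ or $\psi\in\Delta$, from {\bf (A1)}, {\bf (A9)}, {\bf (MP)}), while the decisive new ingredient is the block of negation axioms: by (\axnegimp), (\axnegou), (\axnege) one has $\wneg(\varphi\imp\psi)\in\Delta$ iff $\varphi\in\Delta$ and $\wneg\psi\in\Delta$, $\wneg(\varphi\vee\psi)\in\Delta$ iff $\wneg\varphi\in\Delta$ and $\wneg\psi\in\Delta$, and $\wneg(\varphi\wedge\psi)\in\Delta$ iff $\wneg\varphi\in\Delta$ or $\wneg\psi\in\Delta$; combined with the membership conditions for $\wedge,\vee,\imp$ and the trichotomy, these reproduce exactly the tables of $\aptz$. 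The clause for $\wneg$ uses (\axdneg) (so that $\wneg\wneg\varphi\in\Delta$ iff $\varphi\in\Delta$), and the clause for $\cons$ uses (\axci) together with {\bf (A11)}. Once $v_\Delta$ is a homomorphism it witnesses $\Gamma\not\models_{\lfiuml}\alpha$: each $\gamma\in\Gamma\subseteq\Delta$ satisfies $v_\Delta(\gamma)\in\{1,\frac{1}{2}\}$ since $\gamma\in\Delta$, whereas $v_\Delta(\alpha)=0$ since $\alpha\notin\Delta$ ($\Delta$ being a closed theory with $\Delta\nvdash_{\lfium_\circ}\alpha$). This proves the contrapositive, and with soundness yields $\Gamma\vdash_{\lfium_\circ}\alpha$ iff $\Gamma\models_{\lfiuml}\alpha$.
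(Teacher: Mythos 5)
Your proposal is correct. Note first that the paper does not prove this theorem at all: it is imported from \cite[Theorem~4.4.45]{CC16}, so there is no in-paper argument to compare against. What you give is a complete, self-contained proof by the standard Lindenbaum/canonical-valuation method for finitely valued Hilbert calculi, and all the load-bearing steps check out: the deduction metatheorem holds because the only rule is {\bf (MP)} and {\bf (A1)}, {\bf (A2)} are present; maximal non-triviality plus DMT and {\bf (A8)} yields a deductively closed prime theory; {\bf (A10)} gives $\varphi\notin\Delta\Rightarrow\wneg\varphi\in\Delta$; and your trichotomy is genuinely exhaustive and mutually exclusive, with {\bf (A11)} ruling out $\cons\varphi\in\Delta$ in case (b) and (\axci) forcing $\cons\varphi\in\Delta$ in cases (a) and (c) (and forcing $\wneg\cons\varphi\notin\Delta$ there, which is what pins $v_\Delta(\cons\varphi)$ to $1$ rather than $\frac{1}{2}$). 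I verified that with the membership conditions for $\wedge,\vee,\imp$, the De Morgan-style axioms (\axnegou), (\axnege), (\axnegimp), and (\axdneg), the induced map $v_\Delta$ matches every entry of the five tables of $\aptz$, so it is indeed a homomorphism refuting $\Gamma\models_{\lfiuml}\alpha$. The one cosmetic remark: in case (a) you assert an equivalence with ``$\varphi,\cons\varphi\in\Delta$'', whereas for defining $v_\Delta$ and running the table check only the forward implications in each case (together with exclusivity and exhaustiveness) are needed; this is harmless, since the equivalences do hold by the argument just sketched.
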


\begin{remark} \label{cons-defin}
By Propositions~3.1.10 and~3.2.3 in~\cite{CC16}, and given that $\lfium_\circ$ contains axiom (\axci), it follows that $\vdash_{\lfium_\circ} \circ\alpha \leftrightarrow \sneg(\alpha \wedge \neg\alpha)$ (here, \sneg\ is the classical negation defined as in Remark~\ref{univ-Henk}). On the other hand, taking into account that $\vdash_{\mbc} (\alpha \wedge \neg\alpha) \to \neg\cons\alpha$, it follows from (\axci) that   $\vdash_{\lfium_\circ} \neg\cons\alpha \leftrightarrow (\alpha \wedge \neg\alpha)$.
\end{remark}

\noindent
Since $\lfium_\circ$ is an axiomatic extension of \mbc, its first-order extension $\qlfium_\circ$ can be defined as an axiomatic extension of \qmbc,\footnote{As we shall see in Remark~\ref{Ax14}, axiom {\bf (Ax14)} will be redundant.} hence the  semantics of first-order swap structures for \qmbc\ given in the previous sections can be adapted to  $\qlfium_\circ$, obtaining so a semantics based on first-order twist structures. Indeed, as shown in~\cite{CFG18}, each multioperation in the corresponding swap structures for $\lfium_\circ$ is deterministic, and so these swap structures are twist structures (which are ordinary algebras presented in a particular form).

\begin{definition}
Let $\mA=\langle A, \wedge, \vee, \to,0,1 \rangle$ be a Boolean algebra. The {\em twist domain} generated by \mA\ is the set $\tA=\{(z_1,z_2) \in A  \times A \ : \ z_1 \vee z_2 = 1\}$. 
\end{definition}

\begin{definition}  (\cite[Definition~9.2]{CFG18})  \label{defKlfi1}
Let \mA\ be a Boolean algebra. The  {\em twist structure for $\lfium_\cons$ over \mA} is the algebra  $\tmA=\langle \tA, \tilde{\wedge}, \tilde{\vee}, \tilde{\imp},\tilde{\wneg},\tilde{\cons} \rangle$ over $\Sigma$  such that the operations are defined as follows, for every $(z_1,z_2),(w_1,w_2) \in \tA$:\\[1mm]
(i) $(z_1,z_2)\,\tilde{\wedge}\, (w_1,w_2) = (z_1 \wedge w_1,z_2 \vee w_2)$;\\
(ii)  $(z_1,z_2)\,\tilde{\vee}\, (w_1,w_2) = (z_1 \vee w_1,z_2 \wedge w_2)$;\\
(iii)  $(z_1,z_2)\,\tilde{\imp}\, (w_1,w_2) = (z_1 \imp w_1,z_1 \wedge w_2)$;\\
(iv) $\tilde{\neg} (z_1,z_2) = (z_2,z_1)$;\\
(v) $\tilde{\circ}(z_1,z_2)=(\sneg(z_1 \land z_2),z_1 \land z_2)$.\footnote{Here, $\sneg$ denotes the Boolean complement $\sneg x = x \to 0$.}
\end{definition}

\noindent
The intuitive meaning of a snapshot $(z_1,z_2)$ in $\tA$ is that $z_1$ represents a value, in a given Boolean algebra, for the evidence for $\varphi$, while $z_2$ represents a value for the evidence against $\varphi$ (or a value for the evidence for $\neg\varphi$).

\begin{definition}  \label{def-semKlfi1}
The logical matrix associated to the twist structure \tmA\ is $\matA=\langle \tmA,D_\mA\rangle$ where $D_\mA = \{(z_1,z_2) \in \tA \ : \  z_1=1\} = \{(1,a) \ : \  a \in A\}$. The consequence relation associated to \matA\ will be denoted by $\models_{\tmA}$, namely: $\Gamma \models_{\tmA} \alpha$ iff, for every homomorphism $h:{\cal L}_{\Sigma} \to  \tA$ of algebras over $\Sigma$, if $h(\gamma) \in D_\mA$ for every $\gamma \in \Gamma$ then  $h(\alpha) \in D_\mA$. Let $\mathbb{M}_{LFI1}$ be the class of logical matrices \matA, for any Boolean algebra \mA. The {\em twist consequence relation} for $\lfium_\cons$ is the consequence relation $\models_{\mathbb{M}_{LFI1}}$ associated to $\mathbb{M}_{LFI1}$, namely: $\Gamma \models_{\mathbb{M}_{LFI1}} \alpha$ iff $\Gamma \models_{\tmA} \alpha$ for every Boolean algebra \mA.
\end{definition}

\begin{remark}  \label{twistA2}
In~\cite[Theorem~9.6]{CFG18} it was shown that $\lfium_\cons$ is sound and complete w.r.t. twist structures  semantics, namely: $\Gamma \vdash_{\lfium_\cons} \alpha$ iff $\Gamma\models_{\mathbb{M}_{LFI1}} \alpha$, for every set of formulas $\Gamma \cup\{\alpha\}$. On the other hand, if $\mA_2$  is the two-element Boolean algebra with domain $\{0,1\}$ then $\mathcal{T}_{\mA_2}$ consists of three elements: $(1,0)$, $(1,1)$ and $(0,1)$. By identifying these elements with $1$, $\frac{1}{2}$ and $0$, respectively, then $\mathcal{T}_{\mA_2}$ coincides with the 3-valued algebra \aptz\ underlying the matrix $\matM_{LFI1}$ (recall Definition~\ref{LFI1-def}). Moreover, $\mathcal{MT}_{\mA_2}$ coincides with $\matM_{LFI1}$. Taking into consideration Theorem~\ref{adeqMPT0}, this situation is analogous to the semantical characterization of  \mbc\ w.r.t. the 5-element swap structure over  $\mA_2$: it is enough to consider the structure induced by $\mA_2$ in order to characterize the logic.
\end{remark}

\noindent
A first-order version of $\lfium_\circ$, which will be called $\qlfium_\circ$, can be easily defined from \qmbc.
 
\begin{definition} Let $\Theta$ be a first-order signature. The logic $\qlfium_\circ$ is obtained
from \qmbc\  by deleting axiom {\bf (Ax14)} and by adding axioms {\bf (\axci)}, {\bf 
(\axdneg)}, {\bf (\axnegou)}, {\bf (\axnege)} and {\bf (\axnegimp)} from $\lfium_\circ$, plus the following:\\

$\begin{array}{ll}
{\bf (Ax\neg\exists)} & \neg\exists x\varphi\leftrightarrow \forall x \neg\varphi\\[3mm]
{\bf (Ax\neg\forall)} & \neg\forall x\varphi\leftrightarrow \exists x  \neg\varphi
\end{array}$
\end{definition}

\begin{remark} \label{Ax14} Observe that $\qlfium_\circ$  can be alternatively defined as the Hilbert calculus obtained from  $\lfium_\cons$ by adding axioms {\bf (Ax12)} and {\bf (Ax13)} from Definition~\ref{defqmbc}, 
${\bf (Ax\neg\exists)}$ and ${\bf (Ax\neg\forall)}$ above, and the inference rules ${\bf (\exists\mbox{\bf -In})}$ and ${\bf (\forall\mbox{\bf -In})}$ from Definition~\ref{defqmbc}.
The fact that axiom {\bf (Ax14)} is no longer required is justified by the fact that it can now be derived from the other axioms. This can be proved easily after obtaining  the completeness of  $\qlfium_\circ$  w.r.t. twist structures semantics, since  axiom {\bf (Ax14)} is valid w.r.t. that semantics.
\end{remark}

\noindent
The consequence relation of $\qlfium_\circ$ will be denoted by $\vdash_{\qlfium_\circ}$. Since  $\qlfium_\circ$ does not add inference rules to \qmbc, it satisfies a deduction meta-theorem (DMT) analogous to \qmbc\ (see Theorem~\ref{teoded:teo}).

Now, the swap structures semantics for \qmbc\ can be adapted to $\qlfium_\circ$, taking into account that the swap structures for $\lfium_\circ$ are exactly the twist structures  introduced in Definition~\ref{defKlfi1}. This leads us to the following definition:

\begin{definition} \label{tstru} let \mA\ be a complete Boolean algebra. Let \matA\ be the logical matrix associated to the twist structure \tmA\  for $\lfium_\circ$, and let $\Theta$ be a first-order signature. A  (first-order) {\em structure} over \matA\  and $\Theta$, or a {\em $\qlfium_\circ$-structure} over $\Theta$, is a pair $\mathfrak{A} = \langle U, I_{\mathfrak{A}} \rangle$ as in Definition~\ref{stru}, but now  $I_\mathfrak{A}(P)$ is a function from $U^n$ to $\tA$, for each predicate symbol $P$ of arity $n$.
\end{definition}

\begin{Not} {\em As it was done with \qmbc, $c^\mathfrak{A}$, $f^\mathfrak{A}$ and $P^\mathfrak{A}$  will denote the interpretation of an individual constant symbol $c$, a function symbol $f$ and a predicate symbol $P$, respectively.}
\end{Not}

\noindent The notion of  assignment over a $\qlfium_\circ$-structure is as in Definition~\ref{assign}. The notion of interpretation  $\termvalue{t}^\mathfrak{A}_\mu$ of a term $t$ given  a structure $\mathfrak{A}$ and an assignment $\mu$ is identical to the one described in  Definition~\ref{term}. Given $\mathfrak{A}$, the structure $\widehat{\mathfrak{A}} = \langle U, I_{\widehat{\mathfrak{A}}} \rangle$ over $\Theta_U$  is defined analogously to the case of \qmbc\ (recall Definition~\ref{extA}).

\begin{Not} \label{projz}
{\em By adapting Notation~\ref{Pi1}, if $z \in \tA$ then  $(z)_1$ and $(z)_2$, or simply $z_1$ and $z_2$, will denote the first and second coordinates of $z$, respectively. }
\end{Not}

\noindent  As it was discussed after Definition~\ref{mu}, in order to obtain a single denotation (truth-value) for a formula in \qmbc, a  given interpretation and an assignment are not enough:  valuations are necessary in order to choose a unique denotation, in case the formula is complex (that is, if it contains connectives or quantifiers). The case of $\qlfium_\circ$ is different, since twist structures are deterministic (that is, they are ordinary algebras). This being so, from a given denotation for the atomic formulas, the denotation for complex formulas is uniquely determined fom the denotation of its components, which is in line with the  traditional approach to first-order algebraic logic originated by Mostowski. Because of this, valuations over structures  are no longer necessary for  $\qlfium_\circ$, and a structure $\mathfrak{A}$ will assign a single denotation (truth-value), denoted by $\termvalue{\varphi}^{\mathfrak{A}}$,  to each sentence $\varphi$. Thid lead us to the following definition:

\begin{definition} ($\qlfium_\circ$ interpretation maps)~\label{val1}
Let \mA\ be a complete Boolean algebra, and let $\mathfrak{A}$ be  a structure over \matA\ and $\Theta$. The {\em interpretation map} for $\qlfium_\circ$ over $\mathfrak{A} $ and  \matA\ is a function $\termvalue{\cdot}^{\mathfrak{A}}:Sen(\Theta_U) \to \tA$ satisfying the following clauses (using Notation~\ref{projz} in clauses~(iv) and~(v)): \\[1mm]
(i) $\termvalue{P(t_1,\ldots,t_n)}^{\mathfrak{A}} = P^{\mathfrak{A}}(\termvalue{t_1}^{\widehat{\mathfrak{A}}},\ldots,\termvalue{t_n}^{\widehat{\mathfrak{A}}}) $, if $P(t_1,\ldots,t_n)$ is atomic; \\[1mm]
(ii) $\termvalue{\#\varphi}^{\mathfrak{A}} = \tilde{\#} \termvalue{\varphi}^{\mathfrak{A}}$, for every $\#\in \{\neg, \cons\}$;\\[1mm]
(iii)  $\termvalue{\varphi \# \psi}^{\mathfrak{A}} = \termvalue{\varphi}^{\mathfrak{A}} \, \tilde{\#} \, \termvalue{\psi}^{\mathfrak{A}}$, for every $\#\in \{\wedge,\vee, \to\}$;\\[1mm]
(iv) $\termvalue{\forall x\varphi}^{\mathfrak{A}} = \big(\bigwedge_{a \in U} (\termvalue{\varphi[x/\bar{a}]}^{\mathfrak{A}})_1,\bigvee_{a \in U} (\termvalue{\varphi[x/\bar{a}]}^{\mathfrak{A}})_2\big)$;\\[1mm]
(v) $\termvalue{\exists x\varphi}^{\mathfrak{A}} = \big(\bigvee_{a \in U} (\termvalue{\varphi[x/\bar{a}]}^{\mathfrak{A}})_1,\bigwedge_{a \in U} (\termvalue{\varphi[x/\bar{a}]}^{\mathfrak{A}})_2\big)$.
\end{definition}

\noindent
The definition of the interpretation of the quantifiers  in $\qlfium_\circ$ is coherent with the fact that  $\tmA$\ (ordered by: $z \leq w$ iff $z_1 \leq w_1$ and $z_2 \geq w_2$) is a complete lattice (since $\mA$ is a complete Boolean algebra), in which
\begin{itemize}
\item[] $\bigwedge_{i \in I} z_i = \big(\bigwedge_{i \in I} (z_i)_1,\bigvee_{i \in I} (z_i)_2\big)$, and
\item[] $\bigvee_{i \in I} z_i = \big(\bigvee_{i \in I} (z_i)_1,\bigwedge_{i \in I} (z_i)_2\big)$ 
\end{itemize}
for every family $(z_i)_{i \in I}$ in $\tA$. Note that $1_{\tmA}=_{def}(1,0)$ and $0_{\tmA}=_{def}(0,1)$ are the top and bottom elements of \tmA, respectively.

\begin{definition}  Let \mA\ be a complete Boolean algebra,  $\mathfrak{A}$  a structure over \matA\ and $\Theta$, and let $\mu$  be  an assignment over $\mathfrak{A}$. The {\em extended interpretation map} $\termvalue{\cdot}^{\mathfrak{A}}_{\mu}:For(\Theta_U) \to \tA$  is given by $\termvalue{\varphi}^{\mathfrak{A}}_{\mu} = \termvalue{\widehat{\mu}(\varphi)}^{\mathfrak{A}}$.
\end{definition}

\begin{definition} \label{consrel0} Let \mA\ be a complete Boolean algebra, and let $\mathfrak{A}$ be  a structure over \matA\ and $\Theta$.  Given a set of formulas $\Gamma \cup \{\varphi\} \subseteq For(\Theta_U)$, $\varphi$ is said to be  a {\em semantical consequence of $\Gamma$ w.r.t. $(\mathfrak{A}, \matA)$}, denoted by $\Gamma\models_{(\mathfrak{A}, \matA)}\varphi$, if the following holds:  if $\termvalue{\gamma}^{\mathfrak{A}}_{\mu} \in D$,  for every formula $\gamma \in \Gamma$ and every assignment $\mu$, then $\termvalue{\varphi}^{\mathfrak{A}}_{\mu} \in D$,  for every assignment $\mu$.
\end{definition}

\begin{definition} (Semantical consequence relation in $\qlfium_\circ$ w.r.t. twist structures)  Let $\Gamma \cup \{\varphi\} \subseteq For(\Theta)$ be a set of formulas. Then  $\varphi$ is said to be  a {\em semantical consequence of $\Gamma$ in $\qlfium_\circ$ w.r.t. first-order twist structures}, denoted by $\Gamma\models_{\qlfium_\circ}\varphi$, if $\Gamma\models_{(\mathfrak{A}, \matA)}\varphi$ for every $(\mathfrak{A}, \matA)$.
\end{definition}

\noindent 
The  soundness of $\qlfium_\circ$ w.r.t. first-order twist structures semantics can be easily obtained. The proof is analogous but much easier than the proof for \qmbc\ given in Theorem~\ref{sound-Qmbc-swap}, given that valuations are no longer necessary.



\begin{proposition}~\label{paxioms1} \ \\
(i) $\alpha, \, \alpha \to \beta \models_{\qlfium_\circ} \beta$; \\
(ii) $\alpha \to \beta \models_{\qlfium_\circ} \exists x \alpha \to \beta$, if $x$ is not free in $\beta$;\\ 
(iii) $\alpha \to \beta \models_{\qlfium_\circ} \alpha \to \forall x \beta$, if $x$ is not free in $\alpha$;\\
(iv) $ \models_{\qlfium_\circ} \forall x \alpha \to \alpha[x/t]$, if $t$ is a term free for $x$ in $\alpha$;\\
(v) $\models_{\qlfium_\circ} \alpha[x/t] \to \exists x \alpha$, if $t$ is a term free for $x$ in $\alpha$;\\
(vi) $\models_{\qlfium_\circ} \neg\exists x\varphi\leftrightarrow \forall x \neg\varphi$;\\
(vii) $\models_{\qlfium_\circ} \neg\forall x\varphi\leftrightarrow \exists x  \neg\varphi$.
\end{proposition}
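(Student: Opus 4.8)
The plan is to verify each item directly from Definition~\ref{val1}, exploiting the fact that twist structures are deterministic (so there is a unique interpretation map $\termvalue{\cdot}^{\mathfrak{A}}$) and that the underlying partial order on $\tmA$ is a complete lattice. Throughout, the key computational facts are: a snapshot $z=(z_1,z_2)$ is designated iff $z_1=1$; the first coordinate of $z\,\tilde{\imp}\,w$ is $z_1\imp w_1$; and the Substitution Lemma (the analogue of Theorem~\ref{substlem}), which here is \emph{trivially} true because $\termvalue{\cdot}^{\mathfrak{A}}$ is defined by structural recursion on formulas and $\termvalue{t}^{\widehat{\mathfrak{A}}}_\mu = b$ implies $\widehat{\mu}(\varphi[z/t]) $ and $\widehat{\mu}(\varphi[z/\bar b])$ have the same denotation by induction on $\varphi$ using clause~(i). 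So the only real obstacle present in the \qmbc\ proof (clause~(vi) of Definition~\ref{val}, guaranteeing substitution) simply evaporates here.

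For items (i)--(v), I would proceed exactly as in Proposition~\ref{paxioms}, but with $v_\mu$ replaced by $\termvalue{\cdot}^{\mathfrak{A}}_\mu$ and with equalities of truth-values instead of set-membership. For (i): if $\termvalue{\alpha}^{\mathfrak{A}}_\mu\in D$ and $\termvalue{\alpha\to\beta}^{\mathfrak{A}}_\mu\in D$ then $(\termvalue{\alpha}^{\mathfrak{A}}_\mu)_1=1$ and $1 = (\termvalue{\alpha}^{\mathfrak{A}}_\mu)_1 \imp (\termvalue{\beta}^{\mathfrak{A}}_\mu)_1$, so $(\termvalue{\beta}^{\mathfrak{A}}_\mu)_1=1$. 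For (ii): from $(\termvalue{\alpha}^{\mathfrak{A}}_{\mu^x_a})_1 \le (\termvalue{\beta}^{\mathfrak{A}}_\mu)_1$ for all $a\in U$ (using $x\notin FV(\beta)$) and clause~(v) of Definition~\ref{val1} giving $(\termvalue{\exists x\alpha}^{\mathfrak{A}}_\mu)_1 = \bigvee_{a\in U}(\termvalue{\alpha[x/\bar a]}^{\mathfrak{A}}_\mu)_1$, the supremum is $\le (\termvalue{\beta}^{\mathfrak{A}}_\mu)_1$, whence $\termvalue{\exists x\alpha\to\beta}^{\mathfrak{A}}_\mu\in D$; item (iii) is dual, using clause~(iv). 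Items (iv) and (v) follow from clauses~(iv)--(v) of Definition~\ref{val1} together with the (trivial) Substitution Lemma, exactly as in Proposition~\ref{paxioms}(iv)--(v): $\bigwedge_{a\in U}(\termvalue{\alpha[x/\bar a]}^{\mathfrak{A}}_\mu)_1 \le (\termvalue{\alpha[x/\bar b]}^{\mathfrak{A}}_\mu)_1 = (\termvalue{\alpha[x/t]}^{\mathfrak{A}}_\mu)_1$ where $b=\termvalue{t}^{\widehat{\mathfrak{A}}}_\mu$.

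The genuinely new items are (vi) and (vii), which did not appear in Proposition~\ref{paxioms} and reflect the De Morgan behaviour of $\tilde{\neg}$ on the twist structure. Here I would compute both sides coordinatewise. For (vi): by clause~(ii), $\termvalue{\neg\exists x\varphi}^{\mathfrak{A}}_\mu = \tilde{\neg}\termvalue{\exists x\varphi}^{\mathfrak{A}}_\mu = \tilde{\neg}\big(\bigvee_{a}(\termvalue{\varphi[x/\bar a]}^{\mathfrak{A}}_\mu)_1,\bigwedge_{a}(\termvalue{\varphi[x/\bar a]}^{\mathfrak{A}}_\mu)_2\big) = \big(\bigwedge_{a}(\termvalue{\varphi[x/\bar a]}^{\mathfrak{A}}_\mu)_2, \bigvee_{a}(\termvalue{\varphi[x/\bar a]}^{\mathfrak{A}}_\mu)_1\big)$, using $\tilde{\neg}(z_1,z_2)=(z_2,z_1)$. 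On the other side, by clause~(iv) and clause~(ii), $\termvalue{\forall x\neg\varphi}^{\mathfrak{A}}_\mu = \big(\bigwedge_{a}(\termvalue{\neg\varphi[x/\bar a]}^{\mathfrak{A}}_\mu)_1, \bigvee_{a}(\termvalue{\neg\varphi[x/\bar a]}^{\mathfrak{A}}_\mu)_2\big) = \big(\bigwedge_{a}(\termvalue{\varphi[x/\bar a]}^{\mathfrak{A}}_\mu)_2, \bigvee_{a}(\termvalue{\varphi[x/\bar a]}^{\mathfrak{A}}_\mu)_1\big)$. The two expressions coincide, so $\termvalue{\neg\exists x\varphi}^{\mathfrak{A}}_\mu = \termvalue{\forall x\neg\varphi}^{\mathfrak{A}}_\mu$, which makes the biconditional take the value $1_{\tmA}\in D$ (since $\termvalue{\gamma\leftrightarrow\gamma}^{\mathfrak{A}}_\mu = (1,0)$ whenever the two sides are equal, by the definitions of $\tilde{\wedge}$ and $\tilde{\imp}$). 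Item (vii) is proved identically, swapping the roles of $\bigwedge$ and $\bigvee$ and using clause~(v). I expect no serious obstacle: the only thing to watch is that one must pass to the extended signature $\Theta_U$ and to $\termvalue{\cdot}^{\mathfrak{A}}_\mu = \termvalue{\widehat{\mu}(\cdot)}^{\mathfrak{A}}$ so that the instances $\varphi[x/\bar a]$ are genuine sentences over $\Theta_U$, exactly as in Definition~\ref{val1}; once the bookkeeping is set up the verification is purely computational.
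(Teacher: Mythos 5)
Your proposal is correct and follows essentially the same route as the paper, which simply adapts Proposition~\ref{paxioms} for items (i)--(v) and derives (vi)--(vii) by the coordinatewise computation you give. One small slip: when the two sides of a biconditional have equal value $z$, the formula $\gamma\leftrightarrow\gamma'$ evaluates to $(1,\,z_1\wedge z_2)$ rather than $1_{\mathcal{T}_\mathcal{A}}=(1,0)$ (since $z\,\tilde{\imp}\,z=(1,z_1\wedge z_2)$), but this value is still designated, so the conclusion stands.
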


\begin{proof} 
The proofs for items~(i)-(v) are easily obtained by adapting the corresponding proofs for \qmbc\ given in Proposition~\ref{paxioms}. Items~(vi) and~(vii) follow easily from the definitions. 
\end{proof}

\begin{corollary}~\label{psound1}
Let \mA\ be a complete Boolean algebra, and let  $\mathfrak{A}$ be a structure over \matA\ and $\Theta$. Then:\\
(1) If $\alpha$ is an instance of an axiom schema of $\qlfium_\circ$ then $\termvalue{\alpha}^{\mathfrak{A}}_{\mu} \in D_\mA$, for every assignment $\mu$.\\
(2) If $\alpha$ and $\beta$ are formulas such that $\termvalue{\alpha}^{\mathfrak{A}}_{\mu} \in D_\mA$ and $\termvalue{\alpha \to \beta}^{\mathfrak{A}}_{\mu} \in D_\mA$ for every assignment $\mu$, then $\termvalue{\beta}^{\mathfrak{A}}_{\mu} \in D_\mA$ for every $\mu$.\\
(3) If $\alpha$ and $\beta$ are formulas such that $\termvalue{\alpha \to \beta}^{\mathfrak{A}}_{\mu} \in D_\mA$ for every assignment $\mu$, and if $x$ does not occur free in $\beta$, then $\termvalue{\exists x \alpha \to \beta}^{\mathfrak{A}}_{\mu} \in D_\mA$ for every $\mu$.\\
(4) If $\alpha$ and $\beta$ are formulas such that $\termvalue{\alpha \to \beta}^{\mathfrak{A}}_{\mu} \in D_\mA$, for every assignment $\mu$, and if $x$ does not occur free in $\alpha$, then $\termvalue{\alpha \to \forall x\beta}^{\mathfrak{A}}_{\mu} \in D_\mA$ for every $\mu$.
\end{corollary}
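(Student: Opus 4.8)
The plan is to follow the proof of Corollary~\ref{psound} almost line by line, with two simplifications that come from twist structures being deterministic: valuations disappear, and the Substitution Lemma is trivially available since the interpretation map is already functional on subformulas. Throughout I will use that $D_\mA = \{z \in \tA \ : \ z_1 = 1\}$ and that, by clause~(iii) of Definition~\ref{val1} together with Definition~\ref{defKlfi1}(iii), $(\termvalue{\varphi \to \psi}^{\mathfrak{A}}_{\mu})_1 = (\termvalue{\varphi}^{\mathfrak{A}}_{\mu})_1 \imp (\termvalue{\psi}^{\mathfrak{A}}_{\mu})_1$, so that $\termvalue{\varphi \to \psi}^{\mathfrak{A}}_{\mu} \in D_\mA$ iff $(\termvalue{\varphi}^{\mathfrak{A}}_{\mu})_1 \leq (\termvalue{\psi}^{\mathfrak{A}}_{\mu})_1$.

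For item~(1) I would split the axiom schemas of $\qlfium_\circ$ into the propositional ones inherited from $\lfium_\circ$ (the \mbc-axioms plus $\axci$, $\axdneg$, $\axnegou$, $\axnege$, $\axnegimp$) and the quantificational ones ({\bf (Ax12)}, {\bf (Ax13)}, ${\bf (Ax\neg\exists)}$, ${\bf (Ax\neg\forall)}$); recall {\bf (Ax14)} has been dropped. For a propositional axiom instance $\alpha = \alpha_0(\psi_1,\ldots,\psi_k)$ and a fixed assignment $\mu$, clauses~(ii)--(iii) of Definition~\ref{val1} show that $\termvalue{\widehat{\mu}(\alpha)}^{\mathfrak{A}} = h(\alpha_0)$, where $h : {\cal L}_{\Sigma} \to \tA$ is the homomorphism sending the $i$-th propositional variable to $\termvalue{\widehat{\mu}(\psi_i)}^{\mathfrak{A}}$; since $\alpha_0$ is a theorem of $\lfium_\circ$, soundness of $\lfium_\circ$ w.r.t.\ twist structures (Remark~\ref{twistA2}, i.e.\ \cite[Theorem~9.6]{CFG18}) gives $h(\alpha_0) \in D_\mA$. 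For the quantificational schemas I would simply quote Proposition~\ref{paxioms1}(iv)--(vii).

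For items~(2)--(4) I would reproduce the corresponding arguments from the proof of Corollary~\ref{psound}, now invoking Proposition~\ref{paxioms1}(i)--(iii) in place of Proposition~\ref{paxioms}(i)--(iii). Item~(2) is immediate from the observation in the first paragraph: $(\termvalue{\alpha}^{\mathfrak{A}}_{\mu})_1 = 1$ and $(\termvalue{\alpha}^{\mathfrak{A}}_{\mu})_1 \leq (\termvalue{\beta}^{\mathfrak{A}}_{\mu})_1$ force $\termvalue{\beta}^{\mathfrak{A}}_{\mu} \in D_\mA$. For item~(3), assuming $\termvalue{\alpha \to \beta}^{\mathfrak{A}}_{\mu} \in D_\mA$ for all $\mu$, I would pass to the assignments $\mu^{x}_{a}$ to get $(\termvalue{\alpha}^{\mathfrak{A}}_{\mu^{x}_{a}})_1 \leq (\termvalue{\beta}^{\mathfrak{A}}_{\mu^{x}_{a}})_1 = (\termvalue{\beta}^{\mathfrak{A}}_{\mu})_1$ (the last equality since $x$ is not free in $\beta$), whence $(\termvalue{\exists x \alpha}^{\mathfrak{A}}_{\mu})_1 = \bigvee_{a \in U}(\termvalue{\alpha}^{\mathfrak{A}}_{\mu^{x}_{a}})_1 \leq (\termvalue{\beta}^{\mathfrak{A}}_{\mu})_1$ by clause~(v) of Definition~\ref{val1}, so $\termvalue{\exists x \alpha \to \beta}^{\mathfrak{A}}_{\mu} \in D_\mA$; item~(4) is the dual, using infima and clause~(iv).

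The only point requiring a little care is the reduction in item~(1): making precise that composing the first-order interpretation map with $\widehat{\mu}$ factors through a propositional homomorphism into the twist algebra, so that the known propositional soundness of $\lfium_\circ$ applies uniformly to all the inherited schemas at once, rather than verifying the five \lfi-axioms by hand. Everything else is routine — and, unlike the \qmbc\ case, no Substitution Lemma is invoked for {\bf (Ax12)}/{\bf (Ax13)}, since Proposition~\ref{paxioms1}(iv)--(v) already subsumes it.
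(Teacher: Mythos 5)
Your proposal is correct and matches the route the paper intends: the paper omits the proof of this corollary, but by analogy with Corollary~\ref{psound} it is exactly ``item~(1) from propositional soundness of $\lfium_\circ$ w.r.t.\ twist structures together with Proposition~\ref{paxioms1}(iv)--(vii); items~(2)--(4) from Proposition~\ref{paxioms1}(i)--(iii)''. Your explicit factoring of $\termvalue{\widehat{\mu}(\cdot)}^{\mathfrak{A}}$ through a propositional homomorphism into $\tmA$ is just a careful spelling-out of that first step, so nothing further is needed.
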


\begin{theorem} {\em (Soundness of $\qlfium_\circ$ w.r.t. first-order twist structures)} \label{sound-Qlfi1-twist}
For every set $\Gamma \cup\{ \varphi\}  \subseteq For(\Theta)$:  if $\Gamma \vdash_{\qlfium_\circ} \varphi$, then  $\Gamma \models_{\qlfium_\circ} \varphi$.
\end{theorem}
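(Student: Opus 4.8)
The plan is to prove soundness by a straightforward induction on the length of a derivation of $\varphi$ from $\Gamma$ in $\qlfium_\circ$, using Corollary~\ref{psound1} to dispatch each possible step. First I would fix an arbitrary pair $(\mathfrak{A}, \matA)$, where $\mA$ is a complete Boolean algebra and $\mathfrak{A}$ is a structure over $\matA$ and $\Theta$, and assume that $\termvalue{\gamma}^{\mathfrak{A}}_{\mu} \in D_\mA$ for every $\gamma \in \Gamma$ and every assignment $\mu$. The goal is then to show that $\termvalue{\varphi}^{\mathfrak{A}}_{\mu} \in D_\mA$ for every assignment $\mu$; since $(\mathfrak{A}, \matA)$ is arbitrary, this yields $\Gamma \models_{\qlfium_\circ} \varphi$ by the definition of the semantical consequence relation.

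So let $\varphi_1, \ldots, \varphi_n = \varphi$ be a derivation of $\varphi$ from $\Gamma$, and proceed by induction on $i$ to establish that $\termvalue{\varphi_i}^{\mathfrak{A}}_{\mu} \in D_\mA$ for every assignment $\mu$. There are five cases according to the justification of $\varphi_i$: if $\varphi_i$ is an instance of an axiom schema of $\qlfium_\circ$, apply Corollary~\ref{psound1}(1); if $\varphi_i \in \Gamma$, use the standing assumption on $\Gamma$; if $\varphi_i$ follows by $\MP$ from earlier $\varphi_j$ and $\varphi_k = (\varphi_j \to \varphi_i)$, apply Corollary~\ref{psound1}(2) together with the induction hypothesis on $\varphi_j$ and $\varphi_k$; if $\varphi_i = (\exists x \psi \to \chi)$ is obtained by $(\exists\textbf{-In})$ from an earlier $\varphi_j = (\psi \to \chi)$ with $x \notin FV(\chi)$, apply Corollary~\ref{psound1}(3); and if $\varphi_i = (\psi \to \forall x \chi)$ is obtained by $(\forall\textbf{-In})$ from an earlier $\varphi_j = (\psi \to \chi)$ with $x \notin FV(\psi)$, apply Corollary~\ref{psound1}(4). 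In each case the conclusion ``$\termvalue{\varphi_i}^{\mathfrak{A}}_{\mu} \in D_\mA$ for every $\mu$'' is exactly what Corollary~\ref{psound1} delivers, so the induction closes; taking $i = n$ gives $\termvalue{\varphi}^{\mathfrak{A}}_{\mu} \in D_\mA$ for every $\mu$, as required.

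The point that deserves care — rather than a genuine obstacle — is the treatment of the rules $(\exists\textbf{-In})$ and $(\forall\textbf{-In})$: since $\qlfium_\circ$ imposes no restriction relating the quantified variables of these rules to the free variables of the hypotheses in $\Gamma$, the induction hypothesis must be carried in the strong form ``holds for every assignment $\mu$'', which matches the way the semantical consequence relation in Definition~\ref{consrel0} quantifies over all assignments in both its premise and its conclusion. This is precisely what makes items (3) and (4) of Corollary~\ref{psound1} applicable with no side condition on $\Gamma$, so no separate argument about free variables of $\Gamma$ is needed. Everything else is routine bookkeeping: the real content (validity of the propositional and quantifier axioms, and preservation of designated values under the rules) has already been isolated in Proposition~\ref{paxioms1} and Corollary~\ref{psound1}, and — as noted in the excerpt — the argument is in fact simpler than that for \qmbc\ in Theorem~\ref{sound-Qmbc-swap}, because the determinism of twist structures makes auxiliary valuations unnecessary.
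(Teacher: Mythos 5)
Your proof is correct and follows exactly the route the paper takes: induction on the length of the derivation, discharging axioms, premises, \textbf{(MP)}, $(\exists\mbox{\bf -In})$ and $(\forall\mbox{\bf -In})$ via the corresponding items of Corollary~\ref{psound1}. The paper's own proof is a one-line appeal to the same induction and corollary, so your write-up is simply a more explicit version of it.
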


\begin{proof}

By induction on the length $n$ of a derivation of $\varphi$ from $\Gamma$ in $\qlfium_\circ$, taking into account Corollary~\ref{psound1}.
\end{proof}

\section{Completeness of $\qlfium_\circ$ w.r.t. twist structures} \label{complete1}

Now, the  completeness of $\qlfium_\circ$ w.r.t. first-order twist structures semantics will be proved, by adapting the completeness proof for \qmbc\ given in Section~\ref{complete}.

The notion of $C$-\emph{Henkin theory} in $\qlfium_\circ$ is analogous to the one for \qmbc\ given in Definition~\ref{Henkin}. The consequence relation $\vdash^{C}_{\qlfium_\circ}$ is the consequence relation of $\qlfium_\circ$ over the signature $\Theta_{C}$. The following result can be proved by adapting the proof for \qmbc\ (see Proposition~\ref{saturated}):

\begin{proposition} \label{saturated1}
Let  $\Gamma \cup \{\varphi\} \subseteq Sent(\Theta)$  such that $\Gamma \nvdash_{\qlfium_\circ} \varphi$. Then, there exists a set of sentences $\Delta \subseteq Sent(\Theta)$ which is maximally non-trivial with respect to $\varphi$ in $\qlfium_\circ$ (assuming that $\vdash_{\qlfium_\circ}$ is restricted to sentences) and such that $\Gamma \subseteq \Delta$.
\end{proposition}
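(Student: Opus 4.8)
The plan is to reproduce, \emph{mutatis mutandis}, the Lindenbaum-style argument behind Proposition~\ref{saturated} (that is, \cite[Corollary~7.5.4]{CC16}): the only ingredients needed are that $\qlfium_\circ$ is a Tarskian and finitary logic and that its consequence relation can be restricted to the set $Sen(\Theta)$ of sentences in exactly the way explained in \cite[Section~7.5]{CC16}. Both are immediate from the Hilbert presentation of $\qlfium_\circ$: all its inference rules ({\bf (MP)}, ${\bf (\exists\mbox{\bf -In})}$, ${\bf (\forall\mbox{\bf -In})}$) are finitary, so $\vdash_{\qlfium_\circ}$ is finitary, and monotonicity together with the derivability of all axiom instances is built in; deleting {\bf (Ax14)} and adding the new axioms of $\qlfium_\circ$ changes none of this.

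First I would fix $\Gamma \cup \{\varphi\} \subseteq Sen(\Theta)$ with $\Gamma \nvdash_{\qlfium_\circ} \varphi$ and consider the family $\mathcal{X}$ of all $\Sigma \subseteq Sen(\Theta)$ such that $\Gamma \subseteq \Sigma$ and $\Sigma \nvdash_{\qlfium_\circ} \varphi$ (the derivation being taken inside $Sen(\Theta)$). Then $\Gamma \in \mathcal{X}$, so $\mathcal{X} \neq \emptyset$; order $\mathcal{X}$ by inclusion. Given a non-empty chain $\{\Sigma_i\}_{i \in I}$ in $\mathcal{X}$, put $\Sigma = \bigcup_{i \in I} \Sigma_i$. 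Clearly $\Gamma \subseteq \Sigma$; and if $\Sigma \vdash_{\qlfium_\circ} \varphi$ then, by finitariness, some finite $\Sigma_0 \subseteq \Sigma$ already satisfies $\Sigma_0 \vdash_{\qlfium_\circ} \varphi$, and since the chain is directed $\Sigma_0 \subseteq \Sigma_i$ for some $i$, whence $\Sigma_i \vdash_{\qlfium_\circ} \varphi$ by monotonicity, contradicting $\Sigma_i \in \mathcal{X}$. Hence $\Sigma \in \mathcal{X}$, so every chain in $\mathcal{X}$ has an upper bound there.

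By Zorn's Lemma, $\mathcal{X}$ has a maximal element $\Delta$, and $\Gamma \subseteq \Delta$. Condition~(i), $\Delta \nvdash_{\qlfium_\circ} \varphi$, holds because $\Delta \in \mathcal{X}$. For condition~(ii), take any $\psi \in Sen(\Theta)$ with $\psi \notin \Delta$; then $\Delta \subsetneq \Delta \cup \{\psi\}$ while still $\Gamma \subseteq \Delta \cup \{\psi\}$, so maximality forces $\Delta \cup \{\psi\} \notin \mathcal{X}$, i.e. $\Delta, \psi \vdash_{\qlfium_\circ} \varphi$. Thus $\Delta$ is maximally non-trivial with respect to $\varphi$ in $\qlfium_\circ$ (with $\vdash_{\qlfium_\circ}$ restricted to sentences), which is what we wanted. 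I do not expect any genuine obstacle: the argument is entirely standard, and the single point worth a line of justification is the legitimacy of restricting $\vdash_{\qlfium_\circ}$ to $Sen(\Theta)$, which is handled precisely as for \qmbc\ in \cite[Section~7.5]{CC16}.
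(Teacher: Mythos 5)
Your proof is correct and takes essentially the same route as the paper, which simply invokes the standard Lindenbaum--Asser construction (adapting Proposition~\ref{saturated}, i.e.\ \cite[Corollary~7.5.4]{CC16}), using only that $\vdash_{\qlfium_\circ}$ restricted to $Sen(\Theta)$ is Tarskian and finitary. Your Zorn's Lemma argument spells out exactly that adaptation, so nothing further is needed.
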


\begin{definition} Let $\Delta \subseteq Sen(\Theta)$ be a non-trivial theory in $\qlfium_\circ$. Let ${\equiv_{\Delta}^1} \subseteq$ \sent$^{2}$ be the relation in \sent\ given by:  $\alpha \equiv_{\Delta}^1 \beta$ iff $\Delta \vdash_{\qlfium_\circ} \alpha \leftrightarrow\beta$.
\end{definition}

\noindent
As in the case of \qmbc, $\equiv_{\Delta}^1$ is an equivalence relation.  The  equivalence class of a sentence $\alpha$ w.r.t. $\equiv_{\Delta}^1$ will be denoted by $|\alpha|_\Delta$.  By adapting the proof of  Proposition~\ref{Adelta} it is easy to prove the following:

\begin{proposition}
The structure $\mathcal{A}_{\Delta} \defin\langle A_{\Delta}, \bar\wedge, \bar\vee, \bar\to, 0_{\Delta},1_{\Delta}\rangle$ is a Boolean algebra with the following operations: $|\alpha|_\Delta \bar\# |\beta|_\Delta \defin |\alpha \# \beta|_\Delta$ for any $\# \in \{\wedge, \vee, \to\}$, $0_{\Delta} \defin |\varphi \wedge (\neg \varphi \wedge \circ \varphi)|_{\Delta}$ and  $1_{\Delta} \defin |\varphi \vee \neg \varphi|_{\Delta}$, for any sentence~$\varphi$.
\end{proposition}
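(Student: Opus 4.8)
The plan is to reproduce, essentially verbatim, the classical Lindenbaum--Tarski construction, exactly as was done for \qmbc\ in Proposition~\ref{Adelta}; in fact the statement is obtained from Proposition~\ref{Adelta} by replacing $\vdash_{\qmbc}$ by $\vdash_{\qlfium_\circ}$ throughout, and since $\qlfium_\circ$ extends \qmbc, every derivation used there remains available. I would proceed in three steps: (1) show that $\equiv_\Delta^1$ is a congruence for $\wedge,\vee,\to$, so that $\bar\wedge,\bar\vee,\bar\to$ are well defined on $A_\Delta$ and so are the constants $0_\Delta,1_\Delta$; (2) verify the equational axioms of Boolean algebras; (3) check non-degeneracy, i.e.\ $0_\Delta \neq 1_\Delta$.

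For step~(1): since $\qlfium_\circ$ contains positive classical logic \cplp\ together with the rule (MP), the usual positive-logic derivations (``replacement of equivalents'' in the $\{\wedge,\vee,\to\}$-fragment) give that $\Delta \vdash_{\qlfium_\circ} \alpha \leftrightarrow \alpha'$ and $\Delta \vdash_{\qlfium_\circ} \beta \leftrightarrow \beta'$ imply $\Delta \vdash_{\qlfium_\circ} (\alpha \# \beta) \leftrightarrow (\alpha' \# \beta')$ for every $\# \in \{\wedge,\vee,\to\}$; all the sentences involved are closed, so no side-condition on quantified variables interferes. Hence $|\alpha|_\Delta \,\bar\#\, |\beta|_\Delta \defin |\alpha \# \beta|_\Delta$ is a well-defined binary operation on $A_\Delta$. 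For the constants I would invoke Remark~\ref{univ-Henk}: $\bot_\beta$ is a bottom in \mbc\ and, up to provable equivalence, does not depend on $\beta$, while $\vdash_{\qlfium_\circ} \varphi \vee \neg\varphi$; hence $0_\Delta \defin |\varphi \wedge (\neg \varphi \wedge \circ \varphi)|_\Delta$ (the class of $\bot_\varphi$) and $1_\Delta \defin |\varphi \vee \neg\varphi|_\Delta$ do not depend on the chosen sentence~$\varphi$.

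For step~(2): each equational axiom of Boolean algebras --- commutativity, associativity, distributivity, absorption, the two bound laws, and the two complement laws --- translates, under a substitution of the algebraic variables by sentences, into a bi-implication that is derivable already in \cplp, with the single exception of the complement laws, where I would additionally use that $\bot_\beta$ is a bottom, that $\varphi \vee \neg\varphi$ is provable, axiom {\bf (A9)} in the form $\vdash \alpha \vee (\alpha \to \bot_\beta)$, and (MP) in the form $\vdash (\alpha \wedge (\alpha \to \bot_\beta)) \to \bot_\beta$. Setting $\sneg\alpha \defin \alpha \to \bot_\beta$, this yields $|\alpha|_\Delta \,\bar\vee\, |\sneg\alpha|_\Delta = 1_\Delta$ and $|\alpha|_\Delta \,\bar\wedge\, |\sneg\alpha|_\Delta = 0_\Delta$, so every element of $A_\Delta$ is complemented; together with the bound and distributive laws this shows $\mathcal{A}_\Delta$ is a Boolean algebra.

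For step~(3): if $0_\Delta = 1_\Delta$, then $\Delta \vdash_{\qlfium_\circ} (\varphi \vee \neg\varphi) \leftrightarrow \bot_\varphi$; since $\vdash_{\qlfium_\circ} \varphi \vee \neg\varphi$, this gives $\Delta \vdash_{\qlfium_\circ} \bot_\varphi$, and hence $\Delta \vdash_{\qlfium_\circ} \psi$ for every sentence $\psi$ (because $\bot_\varphi$ is a bottom), contradicting the non-triviality of $\Delta$. I do not expect any genuine obstacle here: the whole proof is routine once it is observed that all the reasoning takes place at the level of sentences, where (MP) and the positive-logic derivations apply without quantifier provisos; the only mild care required is in step~(1), namely checking that the operations respect $\equiv_\Delta^1$ and that the two distinguished classes are independent of the auxiliary sentence $\varphi$. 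The extra axioms of $\qlfium_\circ$ beyond those of \qmbc\ play no role in this particular proposition --- they will be needed only later, for the negation and consistency operations of the twist structures.
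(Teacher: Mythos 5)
Your proposal is correct and follows essentially the same route as the paper, which gives no explicit proof here but simply defers to the adaptation of Proposition~\ref{Adelta} (itself justified by the \mbc-axioms and the Lindenbaum--Tarski argument of \cite[Theorem~7.1]{CFG18}); your three steps --- congruence of $\equiv_\Delta^1$ on the positive fragment, verification of the Boolean identities using $\bot_\beta$ and the classical negation $\sneg$ of Remark~\ref{univ-Henk}, and non-degeneracy from the non-triviality of $\Delta$ --- are exactly what that adaptation amounts to. Your closing observation that the extra axioms of $\qlfium_\circ$ play no role here is also accurate.
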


\noindent It is worth noting that $\bar{\sneg}|\alpha|_\Delta \defin |\sneg\alpha|_\Delta$ is the Boolean complement of  $|\alpha|_\Delta$ in $\mathcal{A}_{\Delta}$.\footnote{Observe that the occurrence of \sneg\ on the right-hand side of  the definition denotes a classical negation definable in $\lfium_\circ$, as explained in Remark~\ref{univ-Henk}.}
As it was done with \qmbc, the construction of the canonical model for $\qlfium_\circ$ w.r.t. $\Delta$ requires the use of the MacNeille-Tarski completion of the Boolean algebra $\mathcal{A}_{\Delta}$. Then, consider the following:

\begin{definition}\label{nma1}
 Let $(C\mA_{\Delta}, \ast)$ be the MacNeille-Tarski completion of $\mathcal{A}_{\Delta}$. The twist structure for $\lfium_\cons$ over $C\mA_{\Delta}$ will be denoted by  $\mathcal{T}({\Delta})$, and its domain will be denoted by $T(\Delta)$. The associated logical matrix will be denoted by $\mathcal{MT}({\Delta}) \defin (\mathcal{T}({\Delta}), D_{\Delta})$. 
\end{definition}

\begin{remark}\label{rem1}
It is worth noting that $(\ast(|\alpha|_{\Delta}), \ast(|\beta|_{\Delta})) \in D_{\Delta} \ \mbox{ iff } \ \Delta \vdash_{\qlfium_\circ} \alpha$.
\end{remark}

\begin{definition} (Canonical Structure) \label{str1}
Let $\Theta$ be a signature with some individual constant. Let $\Delta \subseteq Sen(\Theta)$ be non-trivial in $\qlfium_\circ$, let $\mathcal{MT}({\Delta})$ be the matrix as in Definition~\ref{nma1}, and let $U \defin $~\ctert. The {\em canonical $\qlfium_\circ$-structure induced by $\Delta$} is the  structure  $\mathfrak{A}_\Delta = \langle U, I_{\mathfrak{A}_\Delta} \rangle$ over $\mathcal{MT}({\Delta})$ and $\Theta$ such that:
\begin{itemize}
\item[-] $c^{\mathfrak{A}_\Delta} = c $, for every individual constant $c \in \mathcal{C}$;
\item[-] $f^{\mathfrak{A}_\Delta}: U^n \to U$ is such that $f^{\mathfrak{A}_\Delta} (t_1, \ldots, t_n) = f(t_1, \ldots, t_n)$, for every function symbol $f$ of arity $n$;
\item[-] $P^{\mathfrak{A}_\Delta}(t_1, \ldots, t_n) = (\ast(|\varphi|_{\Delta}), \ast(|\neg\varphi|_{\Delta})) $ with $\varphi = P(t_1, \ldots, t_n)$, for every predicate symbol $P$ of arity $n$.
\end{itemize}
\end{definition}

\noindent Note that $|\varphi|_{\Delta} \bar\vee \,|\neg \varphi|_{\Delta} = |\varphi \vee \neg \varphi|_{\Delta}= 1$ and so $\ast(|\varphi|_{\Delta}) \,\vee \,\ast(|\neg \varphi|_{\Delta}) = \ast(|\varphi \vee \neg \varphi|_{\Delta}) = 1$. Hence, $P^{\mathfrak{A}_\Delta}(t_1, \ldots, t_n) \in T({\Delta})$ and so  $\mathfrak{A}_\Delta $ is indeed a structure over $\mathcal{MT}({\Delta})$ and $\Theta$.

Let $(\cdot)^\triangleright:(Ter(\Theta_U) \cup For(\Theta_U)) \to (Ter(\Theta) \cup For(\Theta))$ be the function introduced in  Definition~\ref{transla} such that $\left(\,s\,\right)^\triangleright$ is  obtained from $s$ by substituting every occurrence of a constant $\bar{t}$ by the term  $t$ itself. Clearly $(t)^\triangleright = \termvalue{t}^{\widehat{\mathfrak{A}_{\Delta}}}$ for every $t \in CTer(\Theta_{U})$. By adapting the proof of Lemma~\ref{quantOK} it follows:

\begin{lemma} \label{quantOK1}
Let $\Delta \subseteq$ \sent\  be a set of sentences over a signature $\Theta$ such  that $\Delta$ is a $C$-Henkin theory in $\qlfium_\circ$ for a nonempty set $C$ of individual constants of $\Theta$, and $\Delta$ is maximally non-trivial with respect to $\varphi$ in $\qlfium_\circ$, for some sentence $\varphi$.
Then, for every formula $\psi(x)$ in which $x$ is the unique variable (possibly) occurring free, it holds:\\[1mm]
(1) $|\forall x \psi|_\Delta = \bigwedge_{\mA_\Delta} \{ |\psi[x/t]|_\Delta \ :  \ t \in CTer(\Theta)\}$, where $\bigwedge_{\mA_\Delta}$ denotes  an existing infimum in the Boolean algebra $\mathcal{A}_{\Delta}$;\\[1mm]
(2) $|\exists x \psi|_\Delta = \bigvee_{\mA_\Delta} \{ |\psi[x/t]|_\Delta \ :  \ t \in CTer(\Theta)\}$, where $\bigvee_{\mA_\Delta}$ denotes  an existing supremum in the Boolean algebra $\mathcal{A}_{\Delta}$.
\end{lemma}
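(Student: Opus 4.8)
The plan is to mimic closely the proof of Lemma~\ref{quantOK}, which established the analogous statement for \qmbc, since the only structural change is that we now work in $\qlfium_\circ$ instead of \qmbc, and the hypotheses on $\Delta$ ($C$-Henkin, maximally non-trivial) are the same. I would first recall the basic order-theoretic fact in $\mathcal{A}_\Delta$: for sentences $\alpha,\beta$ over $\Theta$, $|\alpha|_\Delta \leq |\beta|_\Delta$ iff $\Delta \vdash_{\qlfium_\circ} \alpha \rightarrow\beta$. This is immediate from the definition of $\equiv_\Delta^1$ and the Boolean algebra structure of $\mathcal{A}_\Delta$, exactly as in the \qmbc\ case.

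Next I would prove item~(2). Fix a formula $\psi$ in which $x$ is the only possibly-free variable. By axiom {\bf (Ax12)} we have $\Delta \vdash_{\qlfium_\circ} \psi[x/t] \rightarrow \exists x\psi$ for every closed term $t$, so $|\psi[x/t]|_\Delta \leq |\exists x\psi|_\Delta$; thus $|\exists x\psi|_\Delta$ is an upper bound of the set $\{|\psi[x/t]|_\Delta \ : \ t \in CTer(\Theta)\}$. For the least-upper-bound property, suppose $\beta$ is a sentence with $|\psi[x/t]|_\Delta \leq |\beta|_\Delta$ for all $t \in CTer(\Theta)$, i.e.\ $\Delta \vdash_{\qlfium_\circ} \psi[x/t] \rightarrow \beta$ for all such $t$. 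Since $\Delta$ is $C$-Henkin in $\qlfium_\circ$, there is a constant $c \in C$ with $\Delta \vdash_{\qlfium_\circ} \exists x\psi \rightarrow \psi[x/c]$; combining this with $\Delta \vdash_{\qlfium_\circ} \psi[x/c] \rightarrow \beta$ and using {\bf (MP)} and the deduction meta-theorem (which holds for $\qlfium_\circ$ since it adds no inference rules to \qmbc), we get $\Delta \vdash_{\qlfium_\circ} \exists x\psi \rightarrow \beta$, i.e.\ $|\exists x\psi|_\Delta \leq |\beta|_\Delta$. Hence $|\exists x\psi|_\Delta = \bigvee_{\mA_\Delta}\{|\psi[x/t]|_\Delta \ : \ t \in CTer(\Theta)\}$, the supremum existing in $\mathcal{A}_\Delta$ precisely because this element realizes it.

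For item~(1), I would run the dual argument. Axiom {\bf (Ax13)} gives $\Delta \vdash_{\qlfium_\circ} \forall x\psi \rightarrow \psi[x/t]$ for every closed term $t$, so $|\forall x\psi|_\Delta$ is a lower bound of $\{|\psi[x/t]|_\Delta \ : \ t \in CTer(\Theta)\}$. For the greatest-lower-bound property I need the $C$-Henkin consequence in its universal form, namely the fact recorded in Remark~\ref{univ-Henk}: since $\Delta$ is $C$-Henkin, for $\psi$ with at most $x$ free there is a constant $c \in C$ with $\Delta \vdash_{\qlfium_\circ} \psi[x/c] \rightarrow \forall x\psi$. (This uses that the classical negation $\sneg$ is definable in $\lfium_\circ$ just as in \mbc, via $\bot_\beta$, and that the relevant derivation transfers; this transfer is routine since $\qlfium_\circ$ extends \qmbc.) Then if $\beta$ is a sentence with $|\beta|_\Delta \leq |\psi[x/t]|_\Delta$ for all $t$, in particular $\Delta \vdash_{\qlfium_\circ} \beta \rightarrow \psi[x/c]$, and composing with $\Delta \vdash_{\qlfium_\circ} \psi[x/c] \rightarrow \forall x\psi$ yields $|\beta|_\Delta \leq |\forall x\psi|_\Delta$. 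Hence $|\forall x\psi|_\Delta = \bigwedge_{\mA_\Delta}\{|\psi[x/t]|_\Delta \ : \ t \in CTer(\Theta)\}$.

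I do not expect any serious obstacle: the argument is a verbatim transcription of the \qmbc\ proof, and the only thing to check carefully is that all the ingredients used there (the deduction meta-theorem, axioms {\bf (Ax12)}, {\bf (Ax13)}, the definability of $\sneg$, and the derivation behind Remark~\ref{univ-Henk}) remain available in $\qlfium_\circ$. The mildest point of care is the universal-Henkin direction in part~(1): one must confirm that the syntactic derivation $\exists x\sneg\varphi \rightarrow \sneg\varphi[x/c] \vdash \varphi[x/c] \rightarrow \forall x\varphi$ (Proposition~7.2.2 in~\cite{CC16}, cited in Remark~\ref{univ-Henk}) goes through in $\qlfium_\circ$ — but since $\qlfium_\circ$ is an axiomatic extension of \qmbc, every \qmbc-derivation is a $\qlfium_\circ$-derivation, so this is automatic.
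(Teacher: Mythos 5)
Your proposal is correct and follows essentially the same route as the paper, which itself only says the lemma ``follows by adapting the proof of Lemma~\ref{quantOK}'': upper/lower bound via {\bf (Ax12)}/{\bf (Ax13)}, and the least-upper/greatest-lower bound property via the $C$-Henkin hypothesis together with Remark~\ref{univ-Henk} for the universal case. The only cosmetic imprecision is calling $\qlfium_\circ$ an axiomatic extension of \qmbc\ (it drops {\bf (Ax14)}), but none of the ingredients you invoke depend on that axiom, so the transfer goes through exactly as you describe.
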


\begin{proposition}\label{rem2}
Let $\Delta \subseteq$ \sent\ be as in Lemma~\ref{quantOK1}. Then, the interpretation map $\termvalue{\cdot}^{\mathfrak{A}_{\Delta}}:Sen(\Theta_U) \to T({\Delta})$ is such that   $\termvalue{\psi}^{\mathfrak{A}_{\Delta}} = (\ast(|(\psi)^\triangleright|_{\Delta}), \ast(|(\neg\psi)^\triangleright|_{\Delta}))$ for every sentence $\psi$. 
Moreover, $\termvalue{\psi}^{\mathfrak{A}_{\Delta}} \in D_{\Delta} \mbox{ iff } \Delta \vdash_{\qlfium_\circ} (\psi)^\triangleright$. In particular, $\termvalue{\psi}^{\mathfrak{A}_{\Delta}} \in D_{\Delta} \mbox{ iff } \Delta \vdash_{\qlfium_\circ} \psi$ for every $\psi \in Sen(\Theta)$. 
\end{proposition}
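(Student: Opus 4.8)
The plan is to prove Proposition~\ref{rem2} by induction on the complexity of the sentence $\psi \in Sen(\Theta_U)$. The crucial structural observation is that, because the twist structure $\mathcal{T}(\Delta)$ is a genuine (deterministic) algebra, the interpretation map $\termvalue{\cdot}^{\mathfrak{A}_\Delta}$ is \emph{uniquely} determined by the recursive clauses of Definition~\ref{val1}; hence it suffices to check that the candidate assignment $\psi \mapsto \big(\ast(|(\psi)^\triangleright|_\Delta),\,\ast(|(\neg\psi)^\triangleright|_\Delta)\big)$ takes values in $T(\Delta)$ and satisfies clauses~(i)--(v) of that definition. Membership in $T(\Delta)$ is clear, since $\ast$ is a Boolean homomorphism and $|(\psi)^\triangleright \vee \neg(\psi)^\triangleright|_\Delta = 1_\Delta$ by axiom \textbf{(A10)}. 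Two auxiliary facts will be used throughout: first, $\termvalue{t}^{\widehat{\mathfrak{A}_\Delta}} = (t)^\triangleright$ for every $t \in CTer(\Theta_U)$ (noted before Lemma~\ref{quantOK1}), together with the routine identity $(\chi[x/\bar t])^\triangleright = \big((\chi)^\triangleright\big)[x/t]$ for $t \in CTer(\Theta)$, proved by induction on $\chi$; and second, that $\ast$ commutes with $\wedge$, $\vee$, $\imp$ and with the Boolean complement $\sneg$, and moreover preserves every infimum and supremum that exists in $\mathcal{A}_\Delta$ (this last point being exactly the reason the \emph{MacNeille--Tarski} completion, rather than an arbitrary complete extension, is used in Definition~\ref{nma1}).

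For the base case $\psi = P(t_1,\dots,t_n)$, Definition~\ref{val1}(i) and Definition~\ref{str1} give $\termvalue{\psi}^{\mathfrak{A}_\Delta} = P^{\mathfrak{A}_\Delta}\big((t_1)^\triangleright,\dots,(t_n)^\triangleright\big) = \big(\ast(|Q|_\Delta),\ast(|\neg Q|_\Delta)\big)$ where $Q = P((t_1)^\triangleright,\dots,(t_n)^\triangleright) = (\psi)^\triangleright$, as required. For the propositional inductive steps I would combine the operation tables of $\tilde\wedge,\tilde\vee,\tilde\imp,\tilde\neg,\tilde\cons$ from Definition~\ref{defKlfi1} with the homomorphism property of $\ast$ (so that first coordinates come out automatically as $\ast(|(\psi)^\triangleright|_\Delta)$) and with the following equalities in $\mathcal{A}_\Delta$ governing the second coordinates: axiom $(\axdneg)$ yields $|(\neg\neg\chi)^\triangleright|_\Delta = |(\chi)^\triangleright|_\Delta$; axioms $(\axnege)$, $(\axnegou)$, $(\axnegimp)$ yield that $|(\neg(\chi_1\#\chi_2))^\triangleright|_\Delta$ equals, respectively, $|(\neg\chi_1)^\triangleright|_\Delta \bar\vee |(\neg\chi_2)^\triangleright|_\Delta$, $|(\neg\chi_1)^\triangleright|_\Delta \bar\wedge |(\neg\chi_2)^\triangleright|_\Delta$, and $|(\chi_1)^\triangleright|_\Delta \bar\wedge |(\neg\chi_2)^\triangleright|_\Delta$, matching exactly the second components produced by $\tilde\wedge$, $\tilde\vee$, $\tilde\imp$; and, for $\cons$, Remark~\ref{cons-defin} gives $\vdash_{\qlfium_\circ}\cons\alpha\leftrightarrow\sneg(\alpha\wedge\neg\alpha)$ and $\vdash_{\qlfium_\circ}\neg\cons\alpha\leftrightarrow(\alpha\wedge\neg\alpha)$, which is precisely clause~(v) of Definition~\ref{defKlfi1} read off in $\mathcal{A}_\Delta$ (using that the definable $\sneg$ computes the Boolean complement there).

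The quantifier cases are the crux. Take $\psi = \forall x\chi$ (the case $\exists x\chi$ being symmetric). Since $U = CTer(\Theta)$, applying the induction hypothesis to each $\chi[x/\bar t]$ and using the substitution identity above gives $\termvalue{\chi[x/\bar t]}^{\mathfrak{A}_\Delta} = \big(\ast(|(\chi)^\triangleright[x/t]|_\Delta),\,\ast(|(\neg\chi)^\triangleright[x/t]|_\Delta)\big)$, so clause~(iv) of Definition~\ref{val1} computes $\termvalue{\forall x\chi}^{\mathfrak{A}_\Delta}$ as the pair whose first coordinate is $\bigwedge\{\ast(|(\chi)^\triangleright[x/t]|_\Delta) : t\in CTer(\Theta)\}$ and whose second is $\bigvee\{\ast(|(\neg\chi)^\triangleright[x/t]|_\Delta) : t\in CTer(\Theta)\}$, both computed in $C\mathcal{A}_\Delta$. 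Now Lemma~\ref{quantOK1} says that $|\forall x\,(\chi)^\triangleright|_\Delta$ is the infimum of $\{|(\chi)^\triangleright[x/t]|_\Delta : t\}$ existing in $\mathcal{A}_\Delta$, and $|\exists x\,(\neg\chi)^\triangleright|_\Delta$ the corresponding existing supremum; since $\ast$ preserves these, the two coordinates become $\ast(|\forall x\,(\chi)^\triangleright|_\Delta) = \ast(|(\forall x\chi)^\triangleright|_\Delta)$ and $\ast(|\exists x\,(\neg\chi)^\triangleright|_\Delta)$. Axiom $(\textbf{Ax}\neg\forall)$ then gives $|\exists x\,(\neg\chi)^\triangleright|_\Delta = |(\neg\forall x\chi)^\triangleright|_\Delta$, closing the induction; the $\exists$ case uses Lemma~\ref{quantOK1} in the dual direction together with $(\textbf{Ax}\neg\exists)$. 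Finally, the ``moreover'' clause is immediate from Remark~\ref{rem1} applied with $\alpha = (\psi)^\triangleright$ and $\beta = (\neg\psi)^\triangleright$, and the last assertion follows since $(\psi)^\triangleright = \psi$ whenever $\psi\in Sen(\Theta)$. The main obstacle is the quantifier step, and specifically the requirement that $\ast$ preserve the relevant infima and suprema of $\mathcal{A}_\Delta$: this is the one non-formal ingredient, and it is precisely what the choice of the MacNeille--Tarski completion buys us.
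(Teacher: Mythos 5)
Your proof is correct and follows essentially the same route as the paper's: induction on the complexity of $\psi$, with the atomic case read off from the canonical structure, the connective cases settled by the axioms $(\axdneg)$, $(\axnegou)$, $(\axnege)$, $(\axnegimp)$ and Remark~\ref{cons-defin}, the quantifier cases handled via Lemma~\ref{quantOK1} together with the preservation of existing infima and suprema by the MacNeille--Tarski embedding $\ast$ and the axioms ${\bf (Ax\neg\forall)}$, ${\bf (Ax\neg\exists)}$, and the final claims obtained from Remark~\ref{rem1}. The only (harmless) difference is presentational: you verify that the candidate map satisfies the defining clauses and invoke uniqueness of the interpretation map, whereas the paper computes $\termvalue{\psi}^{\mathfrak{A}_\Delta}$ directly by induction.
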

\begin{proof}
The proof is done by induction on the complexity of the sentence $\psi$ in $Sen(\Theta_U)$. If $\psi=P(t_1,\ldots,t_n)$ is atomic then, by using Definition \ref{val1}, the fact that $\termvalue{t}^{\widehat{\mathfrak{A}_{\Delta}}}=(t)^\triangleright$ for every $t \in CTer(\Theta_{U})$, and Definition~\ref{str1}, we have:\\[1mm]
$\termvalue{\psi}^{\mathfrak{A}_{\Delta}} = P^{\mathfrak{A}_\Delta}(\termvalue{t_1}^{\widehat{\mathfrak{A}_{\Delta}}}, \ldots, \termvalue{t_n}^{\widehat{\mathfrak{A}_{\Delta}}})=P^{\mathfrak{A}_\Delta}((t_1)^\triangleright, \ldots,(t_n)^\triangleright)$\\
\hspace*{1cm} $=(\ast(|(\psi)^\triangleright|_{\Delta}), \ast(|(\neg\psi)^\triangleright|_{\Delta}))$.\\[1mm]
If $\psi = \neg \beta$ then, by Definition~\ref{val1} and by induction hypothesis,
$$\termvalue{\psi}^{\mathfrak{A}_{\Delta}} = \tilde{\neg}\termvalue{\beta}^{\mathfrak{A}_{\Delta}} = \tilde{\neg}(\ast(|(\beta)^\triangleright|_{\Delta}), \ast(|(\neg\beta)^\triangleright|_{\Delta}))=(\ast(|(\neg\beta)^\triangleright|_{\Delta}), \ast(|(\beta)^\triangleright|_{\Delta})).$$
But $|(\beta)^\triangleright|_{\Delta} = |(\neg\neg\beta)^\triangleright|_{\Delta}$, by (\axdneg). Hence,  $\termvalue{\psi}^{\mathfrak{A}_{\Delta}} = (\ast(|(\psi)^\triangleright|_{\Delta}), \ast(|(\neg\psi)^\triangleright|_{\Delta}))$.\\[1mm]
If $\psi = \cons \beta$ then, by Definition~\ref{val1}, by induction hypothesis, the definition of the operations in $\mathcal{A}_{\Delta}$ and the fact that $\ast$ is a homomorphism of Boolean algebras, \\[1mm]
$\termvalue{\psi}^{\mathfrak{A}_{\Delta}} = \tilde{\cons}\termvalue{\beta}^{\mathfrak{A}_{\Delta}} = \tilde{\cons}(\ast(|(\beta)^\triangleright|_{\Delta}), \ast(|(\neg\beta)^\triangleright|_{\Delta}))$\\
\hspace*{1cm} $=(\ast(|(\sneg(\beta \wedge \neg\beta))^\triangleright|_{\Delta}), \ast(|(\beta \wedge \neg\beta)^\triangleright|_{\Delta}))$.\\[1mm]
But $|(\sneg(\beta \wedge \neg\beta))^\triangleright|_{\Delta} = |(\cons\beta)^\triangleright|_{\Delta}$ and  $|(\beta \wedge \neg\beta)^\triangleright|_{\Delta} = |(\neg\cons\beta)^\triangleright|_{\Delta}$,  by Remark~\ref{cons-defin}. Hence, $\termvalue{\psi}^{\mathfrak{A}_{\Delta}} = (\ast(|(\psi)^\triangleright|_{\Delta}), \ast(|(\neg\psi)^\triangleright|_{\Delta}))$.\\[1mm]
If $\psi = \alpha \# \beta$ for $\# \in \{\wedge, \vee, \to\}$, the proof is analogous, but now axioms (\axnegou), (\axnege) and (\axnegimp) are required.\\[1mm]
If $\psi = \forall x \beta$ then, by Lemma~\ref{quantOK1} and using that $U=CTer(\Theta)$,
$|\forall x \beta|_\Delta = \bigwedge_{\mA_\Delta} \{ |\beta[x/t]|_\Delta \ :  \ t \in U\}$ and so $\ast(|\forall x \beta|_\Delta) = \bigwedge_{C\mA_{\Delta}} \{ \ast(|\beta[x/t]|_\Delta) \ :  \ t \in U\}$. Analogously, $\ast(|\exists x \beta|_\Delta) = \bigvee_{C\mA_{\Delta}} \{ \ast(|\beta[x/t]|_\Delta) \ :  \ t \in U\}$. Then, by Definition~\ref{val1}, by induction hypothesis and by axiom~${\bf (Ax\neg\forall)}$: \\[2mm]
\indent
$\begin{array}{lll}
\termvalue{\forall x \beta}^{\mathfrak{A}_{\Delta}} &=& \big(\bigwedge_{t \in U} (\termvalue{\beta[x/\bar t]}^{\mathfrak{A}_{\Delta}})_1, \, \bigvee_{t \in U} (\termvalue{\beta[x/\bar t]}^{\mathfrak{A}_{\Delta}})_2 \big)\\[2mm]
&=& \big(\bigwedge_{t \in U} \ast(|(\beta[x/\bar t])^\triangleright|_\Delta), \, \bigvee_{t \in U} \ast(|(\neg\beta[x/\bar t])^\triangleright|_\Delta) \big)\\[2mm]
&=&\big({\ast}(|(\forall x\beta)^\triangleright|_{\Delta}), \, {\ast}(|(\exists x\neg\beta)^\triangleright|_{\Delta})\big)= \big({\ast}(|(\forall x \beta)^\triangleright|_{\Delta}), \, {\ast}(|(\neg\forall x \beta)^\triangleright|_{\Delta})\big).
\end{array}
$

\

\noindent Hence, $\termvalue{\psi}^{\mathfrak{A}_{\Delta}} = (\ast(|(\psi)^\triangleright|_{\Delta}), \ast(|(\neg\psi)^\triangleright|_{\Delta}))$. \\[1mm]
If $\psi = \exists x \beta$, the proof is analogous to the previous case. 

This shows that $\termvalue{\psi}^{\mathfrak{A}_{\Delta}} = (\ast(|(\psi)^\triangleright|_{\Delta}), \ast(|(\neg\psi)^\triangleright|_{\Delta}))$ for every sentence $\psi$. The rest of the proof follows by Remark~\ref{rem1}.
\end{proof} 

\begin{theorem} {\em (Completeness  of $\qlfium_\circ$ restricted to sentences w.r.t. first-order twist structures)} 
If $\Gamma \models_{\qlfium_\circ} \varphi$ then $\Gamma \vdash_{\qlfium_\circ} \varphi$, for every $\Gamma \cup \{\varphi\} \subseteq$ \sent. 
\end{theorem}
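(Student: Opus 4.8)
The plan is to mirror the completeness argument for \qmbc\ (Theorem~\ref{comp-sent-Qmbc-swap}), the main simplification being that, since twist structures are deterministic, no valuations are needed: the verification that the canonical object behaves correctly has already been carried out in Proposition~\ref{rem2}. I argue by contraposition. Assume $\Gamma \nvdash_{\qlfium_\circ} \varphi$, with $\Gamma \cup \{\varphi\} \subseteq Sen(\Theta)$; I will construct a $\qlfium_\circ$-structure $\mathfrak{A}$ over a suitable $\matA$ that designates all of $\Gamma$ but not $\varphi$, thus witnessing $\Gamma \not\models_{\qlfium_\circ} \varphi$.

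First I would pass to a Henkin theory. Adapting \cite[Theorem~7.5.3]{CC16} to $\qlfium_\circ$ (routine, exactly as in the \qmbc\ case), I obtain a nonempty set $C$ of new individual constants and a $C$-Henkin theory $\Delta^{H} \subseteq Sen(\Theta_{C})$ in $\qlfium_\circ$ with $\Gamma \subseteq \Delta^{H}$ and such that, for every sentence $\alpha$ over $\Theta$, $\Gamma \vdash_{\qlfium_\circ} \alpha$ iff $\Delta^{H} \vdash^{C}_{\qlfium_\circ} \alpha$; in particular $\Delta^{H} \nvdash^{C}_{\qlfium_\circ} \varphi$. Next, applying Proposition~\ref{saturated1} over the signature $\Theta_{C}$, I extend $\Delta^{H}$ to a set $\overline{\Delta^{H}} \subseteq Sen(\Theta_{C})$ that is maximally non-trivial with respect to $\varphi$ in $\qlfium_\circ$ (with $\vdash^{C}_{\qlfium_\circ}$ restricted to sentences) and, by the usual argument, still a $C$-Henkin theory in $\qlfium_\circ$.

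Now I would run the canonical construction on $\overline{\Delta^{H}}$: let $\mathcal{MT}(\overline{\Delta^{H}}) = (\mathcal{T}(\overline{\Delta^{H}}), D_{\overline{\Delta^{H}}})$ be the matrix of Definition~\ref{nma1} (built from the MacNeille--Tarski completion of $\mathcal{A}_{\overline{\Delta^{H}}}$) and $\mathfrak{A}_{\overline{\Delta^{H}}}$ the canonical $\qlfium_\circ$-structure of Definition~\ref{str1}, both over $\Theta_{C}$. Since $\overline{\Delta^{H}}$ is a $C$-Henkin, maximally non-trivial theory, Proposition~\ref{rem2} applies and gives $\termvalue{\psi}^{\mathfrak{A}_{\overline{\Delta^{H}}}} \in D_{\overline{\Delta^{H}}}$ iff $\overline{\Delta^{H}} \vdash^{C}_{\qlfium_\circ} \psi$, for every sentence $\psi$ over $\Theta_{C}$. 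Because $\Gamma \subseteq \overline{\Delta^{H}}$ this yields $\termvalue{\gamma}^{\mathfrak{A}_{\overline{\Delta^{H}}}} \in D_{\overline{\Delta^{H}}}$ for all $\gamma \in \Gamma$, whereas $\overline{\Delta^{H}} \nvdash^{C}_{\qlfium_\circ} \varphi$ gives $\termvalue{\varphi}^{\mathfrak{A}_{\overline{\Delta^{H}}}} \notin D_{\overline{\Delta^{H}}}$. Finally I restrict $\mathfrak{A}_{\overline{\Delta^{H}}}$ and its interpretation map to the subsignature $\Theta$, obtaining a $\qlfium_\circ$-structure $\mathfrak{A}$ over $\mathcal{MT}(\overline{\Delta^{H}})$ and $\Theta$; as $\Gamma \cup \{\varphi\}$ consists of sentences, their denotations are assignment-independent and unchanged by the restriction, so $\termvalue{\gamma}^{\mathfrak{A}}_{\mu} \in D_{\overline{\Delta^{H}}}$ for every $\gamma \in \Gamma$ and every $\mu$, while $\termvalue{\varphi}^{\mathfrak{A}}_{\mu} \notin D_{\overline{\Delta^{H}}}$. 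Hence $\Gamma \not\models_{(\mathfrak{A}, \mathcal{MT}(\overline{\Delta^{H}}))} \varphi$, and therefore $\Gamma \not\models_{\qlfium_\circ} \varphi$, completing the contrapositive.

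I do not expect any genuinely hard step here: the technical burden has been discharged in advance by Proposition~\ref{rem2}, which in turn rests on Lemma~\ref{quantOK1} for the quantifier clauses and on the axioms (\axdneg), (\axnegou), (\axnege), (\axnegimp), (\axci) and the two axioms governing negated quantifiers, which are precisely what makes $\termvalue{\neg\psi}^{\mathfrak{A}_{\Delta}}$ track $\ast(|(\neg\psi)^\triangleright|_{\Delta})$. The only points requiring a modicum of care are the adaptation of the Henkin-extension result \cite[Theorem~7.5.3]{CC16} to $\qlfium_\circ$ and the observation that restricting a $\qlfium_\circ$-structure together with its (deterministic) interpretation map to a subsignature again yields such a pair; the latter is immediate, since being a $\qlfium_\circ$-structure over $\matA$ imposes no condition beyond $I_{\mathfrak{A}}(P) \colon U^{n} \to \tA$ for each predicate symbol $P$, which is obviously preserved under restriction of the signature.
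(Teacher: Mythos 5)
Your proof is correct and follows essentially the same route as the paper's own argument: Henkin extension via the adaptation of \cite[Theorem~7.5.3]{CC16}, maximal non-trivial extension via Proposition~\ref{saturated1}, the canonical structure of Definitions~\ref{nma1} and~\ref{str1} together with Proposition~\ref{rem2} to separate $\Gamma$ from $\varphi$, and finally the reduct to $\Theta$. The only differences are presentational (you make explicit the assignment-independence of sentence denotations under the reduct, which the paper leaves tacit).
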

\begin{proof}
Suppose that  $\Gamma \cup \{\varphi\} \subseteq$ \sent\ is such that $\Gamma \nvdash_{\qlfium_\circ} \varphi$. By adapting  Theorem~7.5.3 in~\cite{CC16} to $\qlfium_\circ$,\footnote{As observed in the proof of Theorem~\ref{comp-sent-Qmbc-swap} above, Theorem~7.5.3 in~\cite{CC16} also holds for the definition of $C$-Henkin theory adopted in this paper.} there exists a $C$-Henkin theory $\Delta^{H}$ over $\Theta_{C}$ in $\qlfium_\circ$ for some nonempty set $C$ of new individual constants such that $\Gamma \subseteq \Delta^{H}$ and, in addition:  $\Gamma \vdash_{\qlfium_\circ} \alpha$ iff $\Delta^{H} \vdash^{C}_{\qlfium_\circ} \alpha$, for every $\alpha \in $ \sent. In consequence, $\Delta^{H} \nvdash^{C}_{\qlfium_\circ} \varphi$. Because of Proposition~\ref{saturated1}, there is a set of sentences $\overline{\Delta^{H}}$ in $\Theta_{C}$ containing $\Delta^{H}$ which is maximally non-trivial with respect to $\varphi$  in $\qlfium_\circ$ (restricted to $Sen(\Theta_C)$), and such that $\overline{\Delta^{H}}$ is also a $C$-Henkin theory over $\Theta_{C}$ in $\qlfium_\circ$. Consider now $\mathcal{MT}(\overline{\Delta^{H}})$ and $\mathfrak{A}_{\overline{\Delta^{H}}}$ as in Definitions~\ref{nma1} and~\ref{str1},  respectively. By Proposition~\ref{rem2}, $\termvalue{\alpha}^{\mathfrak{A}_{\overline{\Delta^{H}}}} \in D_{\overline{\Delta^{H}}} \mbox{ iff } \overline{\Delta^{H}} \vdash^{C}_{\qlfium_\circ} \alpha$, for every $\alpha$ in $Sen(\Theta_{C})$. But then $\termvalue{\gamma}^{\mathfrak{A}_{\overline{\Delta^{H}}}} \in D_{\overline{\Delta^{H}}}$ for every $\gamma \in \Gamma$ and $\termvalue{\varphi}^{\mathfrak{A}_{\overline{\Delta^{H}}}} \notin D_{\overline{\Delta^{H}}}$. Now, let $\mathfrak{A}$ the reduct of $\mathfrak{A}_{\overline{\Delta^{H}}}$ to $\Theta$. Hence, $\mathfrak{A}$ is a structure over $\mathcal{MT}(\overline{\Delta^{H}})$ and $\Theta$ such that $\termvalue{\gamma}^{\mathfrak{A}} \in D_{\overline{\Delta^{H}}}$ for every $\gamma \in \Gamma$ but $\termvalue{\varphi}^{\mathfrak{A}} \notin D_{\overline{\Delta^{H}}}$. This means that $\Gamma \not\models_{\qlfium_\circ} \varphi$.
\end{proof}

\

\begin{corollary} {\em (Completeness of \qmbc\ w.r.t. first-order twist structures)} \label{comp-QLFI1} Let $\Gamma \cup \{\varphi\} \subseteq For(\Theta)$.  If $\Gamma \models_{\qlfium_\circ} \varphi$ then $\Gamma\vdash_{\qlfium_\circ} \varphi$.
\end{corollary}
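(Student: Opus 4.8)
The plan is to obtain the general completeness statement from the sentence-restricted completeness theorem proved just above, exactly as Corollary~\ref{comp-QmbC} was obtained from Theorem~\ref{comp-sent-Qmbc-swap} in the case of \qmbc, namely via universal closures. For any $\psi \in For(\Theta)$ I would write $(\forall)\psi$ for its universal closure ($(\forall)\psi \defin \psi$ when $\psi$ is a sentence, and $(\forall)\psi \defin (\forall x_1)\cdots(\forall x_n)\psi$ when $x_1,\ldots,x_n$ are exactly the free variables occurring in $\psi$), and $(\forall)\Gamma \defin \{(\forall)\psi \ : \ \psi \in \Gamma\}$, so that $(\forall)\psi \in Sen(\Theta)$ and $(\forall)\Gamma \subseteq Sen(\Theta)$.

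First I would verify the two equivalences, for every $\Gamma \cup \{\varphi\} \subseteq For(\Theta)$: (i) $\Gamma \vdash_{\qlfium_\circ} \varphi$ iff $(\forall)\Gamma \vdash_{\qlfium_\circ} (\forall)\varphi$; and (ii) $\Gamma \models_{\qlfium_\circ} \varphi$ iff $(\forall)\Gamma \models_{\qlfium_\circ} (\forall)\varphi$. Item~(i) is syntactic and relies only on axiom {\bf (Ax13)}, the rule ${\bf (\forall\mbox{\bf -In})}$ and the DMT for $\qlfium_\circ$, all of which $\qlfium_\circ$ inherits from \qmbc; hence the argument used for \qmbc\ transfers unchanged. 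For item~(ii) the one ingredient to record is the twist-structure analogue of Proposition~\ref{propquant}: for every complete Boolean algebra \mA, every structure $\mathfrak{A}$ over \matA\ and $\Theta$, and every formula $\varphi$ with $FV(\varphi) \subseteq \{x_1,\ldots,x_n\}$, we have $\termvalue{\forall x_1\cdots\forall x_n\varphi}^{\mathfrak{A}} \in D_\mA$ iff $\termvalue{\varphi}^{\mathfrak{A}}_\mu \in D_\mA$ for every assignment $\mu$. I would prove this by an easy induction on $n$ using clause~(iv) of Definition~\ref{val1} --- recalling that $z \in D_\mA$ iff $z_1 = 1$, that $\bigwedge_{a \in U} c_a = 1$ iff $c_a = 1$ for every $a \in U$ in the Boolean algebra \mA, and that $\termvalue{\varphi[x/\bar{a}]}^{\mathfrak{A}} = \termvalue{\varphi}^{\mathfrak{A}}_{\mu^{x}_{a}}$ (which is just an unfolding of the extended interpretation map, since twist structures are deterministic). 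With this in hand, item~(ii) follows exactly as in the \qmbc\ case.

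The corollary then follows at once: if $\Gamma \models_{\qlfium_\circ} \varphi$, then $(\forall)\Gamma \models_{\qlfium_\circ} (\forall)\varphi$ by~(ii); since $(\forall)\Gamma \cup \{(\forall)\varphi\} \subseteq Sen(\Theta)$, the sentence-restricted completeness theorem above yields $(\forall)\Gamma \vdash_{\qlfium_\circ} (\forall)\varphi$; and then $\Gamma \vdash_{\qlfium_\circ} \varphi$ by~(i). I do not expect a genuine obstacle here: the whole proof is a routine lift from sentences to arbitrary formulas, and the determinism of twist structures (no valuations are required) actually makes the assignment bookkeeping simpler than in \qmbc; if anything warrants care it is only checking item~(ii) against the definition of $\models_{\qlfium_\circ}$, which quantifies over all assignments.
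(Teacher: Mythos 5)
Your proposal is correct and follows exactly the route the paper intends: the corollary is obtained from the sentence-restricted completeness theorem via universal closures, just as Corollary~\ref{comp-QmbC} was obtained from Theorem~\ref{comp-sent-Qmbc-swap}, with the syntactic equivalence resting on {\bf (Ax13)}, ${\bf (\forall\mbox{\bf -In})}$ and DMT, and the semantic equivalence on the twist-structure analogue of Proposition~\ref{propquant}. The extra care you take in checking the semantic side against Definition~\ref{val1}(iv) is exactly the right (and only) point that needs verification.
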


\section{Completeness of $\qlfium_\circ$ w.r.t. structures over $\matM_{LFI1}$} \label{comp3val}

In Remark~\ref{twistA2} it was observed that $\mathcal{T}_{\mA_2}$, the twist structure for $\lfium_\cons$ defined over the two-element Boolean algebra $\mA_2$, coincides (up to names) with the 3-valued algebra \aptz\ underlying the matrix $\matM_{LFI1}$ and, moreover, $\mathcal{MT}_{\mA_2}$ coincides with the 3-valued characteristic matrix $\matM_{LFI1}$ of \lfiuml. 
Recall that $1$, $\frac{1}{2}$ and $0$ are identified with $(1,0)$, $(1,1)$ and $(0,1)$, respectively.  Let  $\mathfrak{A}$ be  a $\qlfium_\circ$-structure over $\mA_2$. If $\varphi$ is a  formula in which $x$ is the unique variable (possibly) occurring free, let $X= \{\termvalue{\varphi[x/\bar{a}]}^{\mathfrak{A}} : a \in U\}$. Then:
$$\termvalue{\forall x \varphi}^{\mathfrak{A}} =   \begin{cases} 1 \mbox{ if } X = \{1\} \\[1mm] 
\frac{1}{2} \mbox{ if }  X \subseteq \{1,\frac{1}{2}\} \mbox{ and }  \frac{1}{2} \in X \\[1mm] 
0 \mbox{ if }  0 \in X  \end{cases} \,
\termvalue{\exists x \varphi}^{\mathfrak{A}} =   \begin{cases} 1 \mbox{ if } 1 \in X \\[1mm] 
\frac{1}{2} \mbox{ if }  X \cap \{1,\frac{1}{2}\} = \{\frac{1}{2}\} \\[1mm] 
0 \mbox{ if }  X = \{0\}  \end{cases}$$

\noindent 
In Section~\ref{compM5} it was obtained a characterization of \qmbc\ in terms of swap structures over the 5-element characteristic Nmatrix of \mbc, which coincides with the one given in~\cite{avr:zam:07}. That result can be easily  adapted to $\qlfium_\circ$,  by proving that $\qlfium_\circ$ can be characterized by first-order structures defined  over $\matM_{LFI1}$. Indeed, it is possible to adapt Theorem~\ref{val-bival-qmbC} to $\qlfium_\circ$, taking into account that the bivaluations for $\qlfium_\cons$ satisfy aditional clauses, see~\cite[Definition~7.9.16]{CC16}. This lead us to the following result, in view of  Remark~\ref{twistA2}  (details of the proof will be omitted):

\begin{theorem} {\em (Adequacy of $\qlfium_\circ$ w.r.t. first-order structures over $\matM_{LFI1}$)} \label{adeq-LFI1-3val}
For every set $\Gamma \cup \{\varphi\} \subseteq For(\Theta)$:  $\Gamma\vdash_{\qlfium_\circ} \varphi$ \ iff \ $\Gamma\models_{(\mathfrak{A}, \matM_{LFI1})} \varphi$ for every  structure $\mathfrak{A}$ over $\Theta$ and $\matM_{LFI1}$.
\end{theorem}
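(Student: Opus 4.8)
The plan is to follow, \emph{mutatis mutandis}, the argument for \qmbc\ developed in Section~\ref{compM5}: we replace the $5$-element Nmatrix $\matM_5$ by the $3$-element matrix $\matM_{LFI1}$, and the bivaluation semantics for \qmbc\ (Definition~\ref{bivalqmbc}) by the bivaluation semantics for $\qlfium_\circ$ of~\cite[Section~7.9]{CC16}. The soundness (`only if') half is immediate: by Remark~\ref{twistA2}, under the identification of $1,\frac{1}{2},0$ with $(1,0),(1,1),(0,1)$, the matrix $\matM_{LFI1}$ coincides with $\mathcal{MT}_{\mA_2}$, the logical matrix of the twist structure over the complete two-element Boolean algebra $\mA_2$; hence any structure over $\Theta$ and $\matM_{LFI1}$ is a $\qlfium_\circ$-structure in the sense of Definition~\ref{tstru}, and Theorem~\ref{sound-Qlfi1-twist} gives that $\Gamma\vdash_{\qlfium_\circ}\varphi$ implies $\Gamma\models_{(\mathfrak{A},\matM_{LFI1})}\varphi$ for every such $\mathfrak{A}$.

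For completeness, recall the bivaluation semantics of $\qlfium_\circ$: an \emph{interpretation} is a pair $\langle\mathsf{A},\rho\rangle$ with $\mathsf{A}$ a standard Tarskian structure over $\Theta$ and $\rho:\sena\to\{0,1\}$ a bivaluation for \qmbc\ (Definition~\ref{bivalqmbc}) which, in addition, makes $\neg$ commute up to equivalence with $\wedge,\vee,\to$ and with the quantifiers, and satisfies the bivaluation counterparts of $(\axci)$ and $(\axdneg)$ --- these are precisely the extra clauses of~\cite[Definition~7.9.16]{CC16}, mirroring the axioms $(\axci)$, $(\axdneg)$, $(\axnegou)$, $(\axnege)$, $(\axnegimp)$, ${\bf (Ax\neg\exists)}$, ${\bf (Ax\neg\forall)}$ of $\qlfium_\circ$. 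Write $\models^2_{\qlfium_\circ}$ for the associated consequence relation; by~\cite{CC16} we have $\Gamma\vdash_{\qlfium_\circ}\varphi$ iff $\Gamma\models^2_{\qlfium_\circ}\varphi$ (first for sentences, then for arbitrary formulas via universal closure, exactly as in Corollary~\ref{comp-QLFI1}).

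The core step is the analogue of Theorem~\ref{val-bival-qmbC}. Given an interpretation $\mathcal{I}=\langle\mathsf{A},\rho\rangle$ for $\qlfium_\circ$, define the structure $\mathfrak{A}_\mathcal{I}$ over $\matM_{LFI1}$ with the same domain $U$ as $\mathsf{A}$, the same interpretation of individual constants and function symbols, and $I_{\mathfrak{A}_\mathcal{I}}(P)(a_1,\dots,a_n)\defin(\rho(P(\bar a_1,\dots,\bar a_n)),\rho(\neg P(\bar a_1,\dots,\bar a_n)))$ for each $n$-ary predicate symbol $P$; this pair does lie in $\mathcal{T}_{\mA_2}$ because clause $(\valnot)$ forces $\rho(\psi)=1$ whenever $\rho(\neg\psi)=0$. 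One then proves, by induction on the complexity of a sentence $\psi\in\sena$, that $\termvalue{\psi}^{\mathfrak{A}_\mathcal{I}}=(\rho(\psi),\rho(\neg\psi))$, so that $\termvalue{\psi}^{\mathfrak{A}_\mathcal{I}}\in D$ iff $\rho(\psi)=1$: the atomic case is the definition of $\mathfrak{A}_\mathcal{I}$; the cases $\neg,\wedge,\vee,\to$ use the propositional clauses of Definition~\ref{bivalqmbc} together with the $\neg$-commutation clauses; the case $\circ$ uses Remark~\ref{cons-defin}; and the cases $\forall,\exists$ use clauses $(\valuni)$, $(\valex)$ and the clauses commuting $\neg$ with the quantifiers, read against the description of $\bigwedge$ and $\bigvee$ in $\mathcal{T}_{\mA_2}$ recalled after Definition~\ref{val1} (equivalently, against the $3$-valued tables for $\tilde{\forall},\tilde{\exists}$ displayed at the beginning of this section). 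With this in hand, assume $\Gamma\models_{(\mathfrak{A},\matM_{LFI1})}\varphi$ for every $\mathfrak{A}$ over $\matM_{LFI1}$; given any interpretation $\langle\mathsf{A},\rho\rangle$ with $\rho(\widehat{\mu}(\gamma))=1$ for all $\gamma\in\Gamma$ and all $\mu$, passing to $\mathfrak{A}_\mathcal{I}$ yields $\termvalue{\gamma}^{\mathfrak{A}_\mathcal{I}}_{\mu}\in D$ for all $\gamma,\mu$, hence $\termvalue{\varphi}^{\mathfrak{A}_\mathcal{I}}_{\mu}\in D$ for all $\mu$, hence $\rho(\widehat{\mu}(\varphi))=1$ for all $\mu$; thus $\Gamma\models^2_{\qlfium_\circ}\varphi$ and therefore $\Gamma\vdash_{\qlfium_\circ}\varphi$.

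The main obstacle is the inductive identity $\termvalue{\psi}^{\mathfrak{A}_\mathcal{I}}=(\rho(\psi),\rho(\neg\psi))$ for compound and, above all, quantified $\psi$: the first coordinate is handled exactly as for \qmbc\ in Section~\ref{compM5}, but getting the second coordinate right (it records the value of $\neg\forall x\beta$, resp.\ $\neg\exists x\beta$) is precisely where one must invoke the ``negation commutes with the quantifiers'' clauses of the $\qlfium_\circ$-bivaluation and reconcile them with the $\bigvee$/$\bigwedge$ appearing in clauses~(iv)--(v) of Definition~\ref{val1}. Everything else is routine bookkeeping already carried out for \qmbc.
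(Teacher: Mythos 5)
Your proposal is correct and follows essentially the same route the paper intends (and leaves as an omitted sketch): soundness via Remark~\ref{twistA2} and Theorem~\ref{sound-Qlfi1-twist}, and completeness by adapting Theorem~\ref{val-bival-qmbC} to the $\qlfium_\circ$-bivaluations of~\cite[Definition~7.9.16]{CC16}. You in fact supply more detail than the paper does, correctly isolating the key inductive identity $\termvalue{\psi}^{\mathfrak{A}_\mathcal{I}}=(\rho(\psi),\rho(\neg\psi))$ and the role of the negation--quantifier commutation clauses in handling its second coordinate.
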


\noindent
The latter result is a variant (up to language) of the  adequacy theorem of first-order \dacdot\ w.r.t. first-order structures given in~\cite{dot:85a} (see also~\cite{dot:82,dot:85b,dot:87}). Indeed, the semantics in terms of first-order structures over $\matM_{LFI1}$ is equivalent to the 3-valued first-order structures proposed by D'Ottaviano in~\cite{dot:85a} for a quantified version of \dacdot, given that $\lfium_\circ$ is equivalent, up to language, to \dacdot. This shows that the twist-structures semantics for $\qlfium_\circ$ constitutes  a generalization, to any complete Boolean algebra, of the above mentioned semantics for first-order \dacdot. 

The extension of $\qlfium_\circ$ with standard equality is straigtforward, taking into account the construction for \qmbceq\ presented in Section~\ref{secEq}. It is worth noting that, when restricted to structures over $\matM_{LFI1}$, there are differences with D'Ottaviano's approach to first-order \dacdot\ with equality. Indeed, she assumes that the equality  must be classical, that is, every formula $\cons(t_1\approx t_2)$ is valid in her system, contrary to what happens in $\qlfium_\circ$ with equality.  

\section{Final remarks}

In this paper, the semantical frameworks for \qmbc\ already proposed in the literature were extended to a vast class of models based on the non-deterministic algebras known as swap structures. Indeed, the Nmatrix semantics proposed in~\cite{avr:zam:07} and the  semantics given by  interpretations (i.e., standard Tarskian structures plus bivaluations) considered in~\cite{car:etal:14} and~\cite{CC16} coincide, and are particular cases of the swap structures semantics introduced here, as it was shown along this paper.

The advantage of considering models based on  a class of swap structures instead of `classical' models based on a finite Nmatrix (as done in~\cite{avr:zam:07}) is that this enlarged class of models allows us to consider applications  to another fields such, for instance, algebraic logic (as done in~\cite{CFG18}) or paraconsistent set theory. Concerning the latter, the Boolean valued models for set theory could be generalized to this setting, obtaining so swap structures models for several paraconsistent set theories based on \lfis, along the lines of the twist-valued models introduced in~\cite{CC19}.

Two important model-theoretic results for \qmbc\ (and some of its axiomatic extensions) were obtained by Ferguson in~\cite{ferg:18}: \L o\'s' ultraproducts theorem, and a suitable version of the Keisler-Shelah isomorphism theorem, which states that two \qmbc-models are strongly elementarily equivalent iff there exists an ultrafilter $\mathcal{U}$  such that the corresponding ultrapowers over $\mathcal{U}$ are strongly isomorphic. The notions of strong elementary equivalence and strong isomorphism were introduced in~\cite{ferg:18}, as well as an adaptation of the method of atomization introduced by Skolem, which was used in order to prove the Keisler-Shelah theorem for quantified \lfis. It would be interesting to adapt Ferguson's notions and constructions to the present semantical framework for quantified \lfis. 

In other line of research, it would be intersting to extend the techniques developed in~\cite{avr:kon:zam:13} for generating cut-free Gentzen-type calculi for propositional \lfis\ from Nmatrix semantics to the first-order framework described here.

\

\noindent{\bf Acknowledgements:}
This paper is the full version of the  extended abstract~\cite{CFG19a}.
Coniglio was financially supported by an individual research
grant from CNPq, Brazil (308524/2014-4). Figallo-Orellano acknowledges financial support from a post-doctoral grant  from FAPESP, Brazil (2016/21928-0). Golzio was financially supported by a post-doctoral grant  from CNPq, Brazil  (150064/2018-7) and by  a post-doctoral grant  from FAPESP, Brazil (2019/08442-9).

\end{document}